\newtheorem{theorem}{Theorem}[section]
\newtheorem{lemma}[theorem]{Lemma}
\newtheorem{proposition}[theorem]{Proposition}
\newtheorem{corollary}[theorem]{Corollary}
\theoremstyle{definition}
\newtheorem{definition}[theorem]{Definition} 
\newtheorem{remark}[theorem]{Remark}
\theoremstyle{remark}
\newcommand\wst{ w^{*} }   
\def\LpRdN{\big( \LpRd, \, \|\ebbes\|_p \big)}   
\def\Rdst{{{\Rst^d}}}   
\def\BspN{(\Bsp, \, \|\ebbes\|_\Bsp)}   
\def\Bsp{{\boldsymbol B}}   
\newcommand\sumkinf{\sum_{k=1}^\infty}   
\newcommand\sumkin{\sum_{k=1}^n}   
\def\Cst{{\mathbb C}}   
\def\bb{ {\bf b}}   
\newcommand\BPsp{{ \Bsp'}}   
\newcommand\BPN{{ (\BPsp, \| \ebbes \|_\BPsp)}}   
\def\opnormi#1{{|\nnth \| {#1} | \nnth \|  \, }}   
\newcommand\Btsp{{\Bsp^2}}   
\newcommand\Bisp{{\Bsp^1}}   
\def\ScRd{{\Scsp(\Rst^d)}}   
\def\CcRd{{\Ccsp(\Rst^d)}}   
\def\supp{\operatorname{supp}}   
\def\intRd{\int_{\Rst^d}}   
\def\LpRdN{\big( \LpRd, \, \|\ebbes\|_p \big)}   
\def\LtRd{{\Ltsp(\Rst^d)}}   
\newcommand\Rd{\Rdst}   
\newcommand{\Strho}{{\operatorname{St}_\rho}}   
\newcommand{\bZ}{\mathbb{Z}}        
\def\CbRd{{\Cbsp(\Rdst)}}   
\def\CbRdN{{\big( \Cbsp(\Rst^d), \, \|\ebbes\|_\infty \big)}}   
\def\CORd{{\COsp(\Rst^d)}}   
\def\MbRd{{\Mbsp(\Rst^d)}}   
\def\LpRd{{\Lpsp(\Rst^d)}}   
\def\ebbes{\mbox{$\,\cdot\,$}}   
\def\Rst{{\mathbb R}}   
\newcommand\LBit{{ {\mathcal L}(\Bisp \negthinspace , \negthinspace \Btsp)}}   
\def\nnth{{ \negthinspace \: \negthinspace }}   
\def\ScRd{{\Scsp(\Rst^d)}}   
\def\LtR{{\Ltsp(\Rst)}}   
\def\CcRd{{\Ccsp(\Rst^d)}}   
\def\MbRd{{\Mbsp(\Rst^d)}}   
\newcommand{\RR}{\mathbb{R}}   
\def\CcRd{{\Ccsp(\Rst^d)}}   
\def\Mbsp{{\Msp_{\negthinspace b}}}   
\def\COsp{{\Csp_{\negthinspace 0}}}   
\def\CORd{{\COsp(\Rst^d)}}   
\def\CbRd{{\Cbsp(\Rdst)}}   
\def\Cbsp{{\Csp_{\negthinspace b}}}   
\def\Csp{{\boldsymbol C}}   
\def\osc{\operatorname{osc}}
\def\oscd{\osc_{\delta}}
\newcommand{\epso}{{ \varepsilon > 0 }}   
\def\Msp{{\boldsymbol M}}   
\def\Ccsp{{\Csp_{\negthinspace c}}}   
\newcommand\MbRdN{{(\Mbsp(\Rst^d), \| \ebbes \|_\Mbsp )}}   
\def\Wsp{{\boldsymbol W}}   
\def\Zdst{{\Zst^d}}   
\def\CORdN{{\big( \COsp(\Rst^d), \, \|\ebbes\|_\infty \big)}}   
\def\FT{{\operatorname{{\mathcal F}}}} 
\newcommand{\RRn}{{\RR^n}}   
\newcommand{\RRm}{{\RR^m}}   
\def\SORd{{\SOsp(\Rst^d)}}   
\def\SOR{{\SOsp(\Rst)}}   
\def\SORN{\big( \SOR, \|\ebbes\|_\SOsp \big)}   
\def\SORdN{\big( \SORd, \|\ebbes\|_\SOsp \big)}   
\def\SOsp{{\Ssp_{\negthinspace 0}}}   
\def\Ssp{{\boldsymbol S}}   
\newcommand{\SO}{\SOsp}   
\newcommand\Rtd{ {{\Rst}^{2d} }}   
\def\SOPRd{{\SOPsp(\Rst^d)}}   
\def\SOPsp{{\Ssp_{\negthinspace 0}'}}   
\newcommand\LLt{ { \mathcal L} (\Ltsp,\Ltsp)}   
\def\SORtd{{\SOsp(\Rst^{2d})}}   
\newcommand{\TFd}{{{  \Rdst \times \Rdsth }}}   
\def\Rdsth{{\widehat{\Rst}^d}}   
\newcommand\HS{ {\cal HS}}   
\def\SOPRtd{{\SOPsp(\Rst^{2d})}}   
\def\Lpsp{{\Lsp^p}}   
\newcommand\BtspN{(\Bsp^2, \, \|\ebbes\|^{(2)})}  
\def\LtR{{\Ltsp(\Rst)}}   
\def\LtRd{{\Ltsp(\Rst^d)}}   
\def\LiRd{{\Lisp(\Rst^d)}}   
\newcommand{\Drho}{{\operatorname{D}_{\rho}}}   
\newcommand{\bC}{\mathbb{C}}        
\newcommand{\ofp}[1]{{\slp{#1}\srp}} 
\def\slp{{{\raise 0.5pt \hbox{\footnotesize $($}}}}   
\def\srp{{{\raise 0.5pt \hbox{\footnotesize $)$}}}}   
\def\SpPsi{\Sp_\Psi}   
\def\Sp{\operatorname{Sp}}   
\def\Zst{{\mathbb Z}}   
\newcommand\sumnZd{\sum_{n\in\Zdst}}   
\def\Lsp{{\boldsymbol L}}   
\def\Lisp{{\Lsp^1}}   
\newcommand{\bR}{\mathbb{R}}        
\newcommand{\bN}{\mathbb{N}}        
\def\hatf{\hat f}
\def\fhat{\hat f}
\def\IFT{\operatorname{\mathcal F}^{-1}}   
\def\Trans{\operatorname{T}}
\newcommand{\WRN}{{\big( \WR, \, \|\ebbes\|_\Wsp \big)}}   
\def\Scsp{{\boldsymbol{\mathcal S}}}   
\def\Ltsp{{\Lsp^2}}   
\newcommand{\infnorm}[1]{{\lVert #1 \rVert_\infty}}   
\newcommand\LinfRd{ L^{\infty}(\Rst^{d})}   
\newcommand{\WR}{{\Wsp(\Rst)}}   
\def\Rtdst{{\Rst^{2d}}}   
\newcommand\Rm{{  {\Rst}^m }}   
\def\Asp{{\boldsymbol A}}   
\def\NN{\mathbb{N}}
\newcommand{\RRd}{{\mathbb{R}^d}}   
\def\Shah{{\makebox[2.3ex][s]{$\sqcup$\hspace{-0.15em}\hfill $\sqcup$}\, \, }}   
\def\xx{ {\bf x}}   
\newcommand\kin{{_{k=1}^n}}   
\def\ee{ {\bf e}}   
\def\tensor{\otimes}   
\def\LtRtd{{\Ltsp(\Rst^{2d})}}   
\def\trace{{\operatorname{trace}}}   
\def\COPRd{{\COPsp(\Rst^d)}}   
\def\SOPRdN{(\SOPRd , \| \ebbes \|_{\SOPsp} ) }   
\def\WRd{{\Wsp(\Rdst)}}
\def\WRdN{{(\WRd, \| \ebbes \|_\Wsp) }}
\def\LtRdN{\big( \LtRd, \, \|\ebbes\|_2 \big)}   
\newcommand\LSOSOP{ {\cal L} (\SOsp,\SOPsp)}   
\newcommand{\inv}{^{-1}}   
\def\LtRdN{\big( \LtRd, \, \|\ebbes\|_2 \big)}   
\def\sinc{\operatorname{sinc}}   
\def\LtRN{\big( \LtR, \, \|\ebbes\|_2 \big)}   
\def\SINC{{\operatorname{sinc}}} 
\def\LiR{{\Lisp(\Rst)}}   
\def\CORN{{\big( \COsp(\Rst), \, \|\ebbes\|_\infty \big)}}   
\newcommand{\FLiRd}{{ \FLi(\Rdst) }}   
\def\ScPRd{{\ScPsp(\Rst^d)}}   
\def\ScPsp{{\Scsp'}}   
\def\COPsp{{\Csp'_{\negthinspace 0}}}   
\newcommand\LSOPwSO{ {\cal L}_{\wst} (\SOPsp,\SOsp)}   
\def\LqRdN{\big( \LqRd, \, \|\ebbes\|_q \big)}   
\def\Nst{{\mathbb N}}   
\newcommand\HLiSOSOP{{H_\Lisp(\SOsp,\SOPsp)}}   
\def\checkm{{^\checkmark}}   
\newcommand{\WCOliRd}{\WTsp \COsp  \lisp (\Rdst) }   
\def\LpG{{\Lpsp(G)}}   
\newcommand\FLpsp{{\FF \negthinspace \Lpsp}}   
\newcommand\FLi{{\mathcal F}{\negthinspace \Lisp}}   
\newcommand{\intinf}{\int_{-\infty}^{\, \infty}}   
\def\ltsp{{\lsp^2}}   
\def\lsp{{\boldsymbol\ell}}   
\def\LiRdN{\big( \LiRd, \, \|\ebbes\|_1 \big)}   
\def\WTsp#1#2{{\Wsp(#1,#2)}}   
\newcommand\BspqRdN{{ (\BspqRd, \| \ebbes \|_{\Bspq}) }}   
\newcommand{\FF}{{\mathcal{F}}}   
\newcommand{\LqRd}{{ \Lsp^q(\Rdst) }}
\newcommand{\SOGTrRd}{{ (\SOsp,\Ltsp,\SOPsp)(\Rdst) }}   
\def\BspqRd{{\Bspqsp(\Rst^d)}}   
\newcommand\lqsp{{ \lsp^q}}   
\def\lisp{{\lsp^1}}   
\newcommand\FLiRdN{\big( \FLiRd, \, \|\ebbes\|_{\FLisp} \big)}   
\def\Bspqsp{{\Bsp^{s}_{\negthinspace p,q}}}   
\newcommand{\Bspq}{{\Bsp^s_{\negthinspace p,q}}}   
\newcommand\WFLiliRd{{\Wsp(\FLi,\lisp)(\Rdst)}}   
\def\WFW{\Wsp_{\negthinspace \negthinspace \FT}}
\def\WFWRd{{\WFW \!\,  (\Rdst)}}  
\newcommand\WFWRn{\WFW(\bR^{n})}
\newcommand\weaks{weak$^{*}$}
\newcommand{\FLisp}{{ {\mathcal F}\Lisp}}   
\def\Rn{{\Rdst}}  
\def\wedge{\Delta}  
\title{Distribution Theory by Riemann Integrals}
\author{Hans G.\ Feichtinger\thanks{Faculty of Mathematics,
 Univ.\  Vienna,  Oskar-Morgenstern-Platz 1, 1090 Wien, AUSTRIA, and Charles Univ. Prague, \mbox{E-mail: \protect\url{hans.feichtinger@univie.ac.at}}} and Mads S.\ Jakobsen\thanks{Norwegian University of Science and Technology, Department of Mathematical Sciences, Trondheim, Norway, \mbox{E-mail: \protect\url{mads.jakobsen@ntnu.no}}}}
\begin{document}
\date{\today}
\maketitle

\begin{abstract}It is the purpose of this article to outline a syllabus for a course that can be given to engineers looking for an understandable mathematical description of the foundations of distribution theory and the necessary functional analytic methods. Arguably, these are needed for a deeper understanding of basic questions in signal analysis. Objects such as the Dirac delta and the Dirac comb should have a proper definition, and it should be possible to explain how one can reconstruct a band-limited function from its samples by means of simple series expansions. 
It should also be useful for graduate mathematics students who want to see how functional analysis can help to understand fairly practical problems, or teachers who want to offer a course related to the ``Mathematical Foundations of Signal Processing'' at their institutions. \newline 
The course requires only an understanding of the basic terms from linear functional analysis, namely  Banach spaces and their duals, bounded linear operators and a simple version of $\wst$-convergence. 
As a matter of fact we use a set of function spaces which is quite different from the collection of Lebesgue spaces $\LpRdN$ used normally. We thus avoid the use of Lebesgue integration theory. Furthermore we avoid topological vector spaces in the form of the Schwartz space. \\ \indent Although practically all the tools developed and presented can be realized in the context of LCA (locally compact Abelian) groups, i.e.\ in the most  general setting where a (commutative) Fourier transform makes sense, we restrict our attention in the current presentation to the Euclidean setting, where we have (generalized) functions over $\Rdst$. This allows us to make use
of simple BUPUs (bounded, uniform partitions of unity), to apply dilation operators and occasionally to make use of concrete special functions such as the (Fourier invariant) standard Gaussian, given by $g_0(t) = \exp(- \pi \vert t \vert^{2})$. \\ \indent
The problems of the overall current situation, with the separation of theoretical Fourier Analysis as carried out by (pure) mathematicians and Applied Fourier Analysis (as used in engineering applications) are getting bigger and bigger and therefore courses filling the gap are in strong need.
This note provides an outline and may serve as a guideline. The first author has given similar courses over the last years at different schools (ETH Z\"urich, DTU Lyngby, TU Muenich, and currently Charlyes University Prague) and so one can claim that the outline is not just another theoretical contribution to the field.
\end{abstract}


\section{Overall Motivation}

\subsection{Psychological Aspects}

It is not a secret that the way how engineers or physicists are describing ``realities'' 
is quite different from the way  mathematicians want to
describe the same thing. The usual agreement is that applied scientists are motivated by the concrete applications and therefore do not need to be so pedantic in the description,
because they have a ``better feeling''  about what is true and what is not true. After all, it does not pay to be too pedantic if one wants to make progress.

On the other hand mathematicians have a tendency to be too formal, to consider
formal correctness of a statement as more important than the possible usefulness of a statement, simply because usefulness is not a category in mathematical sciences. 
Applicability by itself is not a criterion for important
mathematical results which often go for the details of a structure without taking care of its relevance for applications. Sometimes this ``abstract viewpoint''
is very helpful, because it reveals important, underlying structures or allows to find connections between fields which appear to have very little in common at first sight. However, in the right (abstract) mathematical model they appear
to be almost identical. Such observations allow to sometimes transfer information and insight, or computational rules established in one area to another area, which certainly is not possible if only one single application is in the focus.

There are different ways to view these discrepancies. What we could call the
{\it negative attitude} is to say as a mathematician: {\it You know, engineers
and physicists are extremely sloppy, you never can trust
their formulas. They claim to derive mathematical identities
by using divergent integrals and so on, so one has to be careful in taking over what they ``prove''}.  In the same way the engineer might say: 
{\it You know, mathematicians are pedantic people
who care only about technical details and not for the content of a formula.
Whenever they claim that our formulas are not correct they find after some while a way to produce more theory in order to then prove that our formulas have been correct after all.} 

A more positive and ambitious approach would be to agree from both sides on a few facts which are on average quite valid:
\begin{itemize}
\item Any mathematical statement should, at least at the end, have a proper
   mathematical justification;
\item Formulas developed from applied scientists may, at least at the beginning,
   come from intuition or experiments, so they might be valid under particular
   conditions or under implicit assumptions (which are often clear from the
   physical context, e.g.\ positivity assumptions, etc.);
\item For the progress new formulas might be more important than a refined
   analysis of established formulas, but the goal is to have
   {\it useful formulas whose range of applications} (the relevant assumptions)
   are well understood; it is important to know when there is a guarantee that
   the formula can be applied (because there is a proof), and when one might
   be at risk of getting a wrong result 
   (even if it is with low probability);
\item  This goal requires cooperation between applied scientists and mathematicians;
   usually the first group is better trained in establishing unexplored problems while the second is expected to 
   provide a theoretical setup which ensures that
   things are under control, in terms of correctness of assumptions and    conclusions. Obviously, in an ideal
   world one group can and should learn a lot from the other.
\end{itemize}

So in the cooperation between the two communities mathematicians should
learn more about {\it the goals and the motivation} and e.g.\ {\it engineers and physicists} might learn that it is also beneficial to cooperate with
mathematicians and to have clear guidelines concerning the correct use of formulas and mathematical identities and where perhaps caution is in place.   

\subsection{The search for a Banach space of test functions}

The overall goal of this paper is to propose a path
that allows us to introduce a family of {\it generalized functions}
which is large enough to contain most of those generalized 
functions which are relevant in the context of (abstract or
applied) Fourier analysis and for engineering applications.
Specifically Dirac measures and Dirac combs. 
We will demonstrate that this is possible using modest tools
from functional analysis.

Before going to the technical side of the exposition
let us motivate the use of dual spaces and functional analytic methods, and shed some light on the idea of {\it distributions}.
Let us start with some observations:
\begin{itemize}
    \item First of all it is clear that {\it generalized functions} should form a linear space, so that linear combinations of those objects (sometimes called signals) can be formed, and under
    certain conditions, even limits, and hence infinite series;
    \item Secondly we would like to have ``ordinary functions''
    included in a natural way within the world of generalized
    functions, so we need a {\it natural embedding} of
    as many linear spaces of ordinary functions as possible;
    \item As a third variant 
    we can think of generalized functions as a kind
    of ``limits'' of ordinary functions, but in a specific
    sense (and ideally the convergence should also allowed 
    to be applied to the generalized functions); 
    \item Finally there are many operations that can be carried
    out for (certain) functions, such as translation,
    convolution, dilation, Fourier transform, and we will
    go for a setting where the approximation properties
    of the previous item allow to extend these operations
    to the linear space of generalized functions. 
\end{itemize}

In order to explain our understanding of ``distribution theory'' let
us first formulate again some general thoughts. In fact 
it is not surprising, 
that we have to use functional analytic methods in this context 
because after all at least for continuous variables signal spaces tend to be {\it not finite-dimensional} anymore\footnote{Commonly
the term ``infinite dimensional'' is used, and we will also use
it later on, but this expression wrongly suggests that instead
of a {\it finite basis} one just has an infinite basis, and this
is not what we should expect or use!   }  and so we have to 
resort to methods that allow us to describe the convergence
of infinite series. The simplest way to do this is to 
assume that one has a linear space  and a normed space, $\BspN$.
If one has in addition a kind of multiplication
$(a,b) \mapsto a \bullet b$ (with the usual rules) 
one speaks of {\it normed algebras}, if
$$ \|b_1 \bullet b_2\|_\Bsp  \leq \|b_1\|_\Bsp \cdot \|b_2\|_\Bsp \ \ \text{for all} \ \ b_1, b_2 \in \Bsp.$$ 

Among the normed spaces those which are {\it complete}, the
{\it Banach spaces} are the most important ones, because like
$\Rst$ itself with the mapping $x \mapsto |x|$ one has
(by definition) {\it completeness}, meaning that every
{\it Cauchy sequence} is convergent. This is known to be
equivalent to the fact that every 
{\it absolutely convergent sequence} with 
$\sumkinf \|b_k\|_\Bsp  < \infty$, is convergent, 
so that the partial sums
$\sumkin b_k$ have a limit (in $\BspN$). Therefore the infinite sum is (unconditionally, or independent
of the order) well defined, and thus the symbol 
$\sumkinf b_k$ is meaningful in this situation. 

The most important tool within linear functional analysis
are the {\it linear functionals}, or bounded
linear mappings from $\Bsp$ into $\Cst$ (or into $\Rst$
for the case of real vector spaces). Such a functional $\sigma$
has to satisfy two properties:
\begin{enumerate}
    \item Linearity:  $\quad 
    \sigma( \alpha \bb_1 + \beta \bb_2)
      =  \alpha \sigma(\bb_1) + \beta \sigma(\bb_2), \quad  \bb_1,\bb_2 \in \Bsp, \alpha, \beta \in \Cst.$
    \item Boundedness: There exists $c > 0 $
    such that $\quad |\sigma(\bb)| \leq C \|\bb\|_\Bsp, \,\, 
    \forall \bb \in \Bsp.$
\end{enumerate}

For any given normed space $\BspN$
the collection of all such bounded linear functionals constitutes
the {\it dual space}, denoted by $\BPsp$. It carries a  norm, given by
$$
 \|\sigma\|_\BPsp :=  \sup_{\|\bb\|_\Bsp \leq 1} |\sigma(\bb)|.
$$
With this norm $\BPsp$ turns out to be a
Banach space\footnote{Even if 
$\BspN$ is just a normed space.}.  One can think of the
dual space as the collection of all coordinate functionals
(describing the contribution of a fixed element in a basis)
over all finite dimensional subspaces of $\Bsp$, thus capturing
all the information about the underlying normed space. 

In addition to norm convergence on $\BPsp$ we will use what is called
the $\wst$-convergence. It can
be described for sequences as {\it convergence in action}:

For all practical purposes\footnote{Technically speaking, for {\it separable}  Banach
spaces $\BspN$ which are , which contain a 
countable, dense subset. Thus will be the case for all the situatios where we make use of this concept.} the following definition
is a simple way of describing what is called  $\wst$-convergence.
\begin{definition}
A sequence of linear functionals $(\sigma_n)_{n \geq 1}$
{\it converges in action} or {\it in the weak$^{*}$-sense} to some $\sigma_0 \in \BPsp$ if we have 
\begin{equation}
    \lim_{n \to \infty} \sigma_n(\bb) = \sigma_0(\bb) \ \ \text{for all} \ \ \bb \in \Bsp. 
\end{equation}
\end{definition}


By the Banach-Steinhaus Theorem the convergence 
for {\it all} $\bb \in \Bsp$ implies boundedness,
i.e.\ 
$\sup_{n \geq 1} \| \sigma\|_\BPsp < \infty,$
and that conversely it is (under this condition!) enough to claim that the limits on the  left hand side exist for any $\bb \in \Bsp$, thus defining the functional $\sigma_0$. 
In fact, it would be even enough (given the boundedness condition) to know that one has a limit for all $b$ from a dense subspace of $\BspN$. 

Infinite dimensional Banach spaces $\BspN$ do not satisfy the Heine-Borel
property. A bounded sequence may fail to have a (norm) convergent subsequence. But the {\it Banach-Alaoglou Theorem} 
(see \cite{co90})  
ensures that any bounded sequence $(\sigma_k)$ in $\BPN$
has a subsequence $(\sigma_{n_k})_{k \geq 1}$ which is
$\wst$-convergent to some $\sigma_0 \in \BPsp,$ i.e.\ 
$$ \lim_{k \to \infty} \sigma_{n_k}(b) = \sigma_0(b) \ \ \text{for all} \ \ \bb \in \Bsp.$$ 

In a similar way the set of all bounded and linear operators
between two normed spaces is defined, we denote it by $\LBit$.
It is always a normed space with respect to the operator norm
$$\opnormi{T} := \sup_{\|b_1\|_\Bisp \leq 1} \|T(b_1)\|_\Btsp. $$
and if $\BtspN$ is a Banach space the space of 
operators is complete as well. 
In particular, for the choice $\Btsp = \Cst$ the space reduces to the dual space. 

For the case $\Bisp = \Bsp = \Btsp$ these operators form a normed algebra, and in fact a 
{\it Banach algebra} if $\BspN$ is a Banach space. 

Since many sequences of functions which do not have a reasonable
pointwise limit, such as a sequence of compressed box-functions 
which converge to the so-called {\it Dirac Delta}, often
denoted by $\delta(t)$ in the engineering literature, are in
fact limits in this sense, it is at least plausible to 
work with dual spaces in order to capture these limits. 

Without going too much into the psychological and didactical
side of this issue let us just state here that indeed, it is 
{\it meaningful} to model generalized functions as what we
will call {\it distributions}, namely elements of dual
spaces for suitable chosen Banach spaces $\BspN$ of
integrable and bounded, continuous functions. 

We admit that of course this terminology is influenced
by the existing traditional way of introducing generalized
functions, e.g.\ by using the {\it tempered
distributions} developed by Laurent Schwartz (\cite{sc57})
using the (nuclear Frechet) space $\ScRd$ of {\it rapidly
descreasing functions}. While  differentiability is 
in the focus of attention there, we leave this aspect aside and
allow ourselves to call an algebra (with respect to 
pointwise multiplication and/or convolution) of continuous
functions a {\it space of test functions} and the dual space
a space of {\it distributions}. This will be the setting
we choose for our approach. Thus from now on we will mostly
talk about test functions and distributions,
but we will still have to explain in which sense distributions
are generalized functions in the spirit of the above description.

One can also motivate the use of dual spaces for the description
of linear spaces of signals by the following argument: \newline
\centerline{ 
{\it A signal is something that can be measured!}}
Just thinking of an audio signal which we can record 
using a microphone, we can compress using MP3 coding 
based on the FFT, and we can transmit it. All this is
on the basis of {\it linear measurements} which are of
course continuous in some sense, meaning that quite similar
signals (whatever they are) will provide similar measurements.
But is the audio signal a pointwise almost everywhere defined 
function in $\LtR$ in the mathematical sense?  Of course 
we can take pictures of a natural scene and enjoy the 
quality of color picture taken by a $16$-million pixel camera, 
but does that device really sample (in the mathematical sense)
a continuous, $2D$-function describing the analog picture
which we use in a conversational situation? 

The situation is really much more like an abstract 
probability distribution, say a normal distribution with
some expectation value and some variance. We will never
be able (except through indirect mathematical description)
to provide a pointwise description of such a ``distribution''
(a different but related use of this word), so normally
one resorts to the use of {\it histograms}. Given the 
bins used for the histogram one can describe the height 
of the bars simply as the value obtained by applying the (non-negative) measure (via integration) to the indicator
function of the corresponding interval (bin), making 
sure that the union of the bins is the whole real line
or at least the range of the random variable resp.\
the support of the corresponding measure. 

What we are doing here is essentially to replace
those (finer and finer) bins by BUPUs (uniform partitions
of unity), with the extra demand of assuming that
they are continuous and not just step functions.
The reader should see this as a minor and just technical
modification (which is avoiding the distinction between
step functions and continuous functions, and is also
much more convenient for the setting of LCA groups).

The (abstract) viewpoint of considering signals as something that can be measured also suggests very naturally a measure of similarity of signals. If for a given (potentially comprehensive) set of measurements only very small deviations are observed, then we think of those signals as ``quite similar'', and a sequence of signals may converge in this way to a limit signal (e.g.\ coarse approximations to the continuous limit). But this kind of convergence is encapsulated mathematically in the concept of $\wst$-convergence described above, that will be used intensively in this text. 

\section{Notations and Preliminaries}  

Although the approach described below can be used to develop
Harmonic Analysis in the context of locally compact Abelian (LCA) groups we restrict our attention to the setting of Euclidean 
spaces $\Rdst$. This is the framework relevant for most engineering work and physics. 

Let us fix some notation. It all starts with the most simple vector space of functions
on $\Rdst$, namely $\CcRd$, the space of continuous, complex-valued  and compactly supported functions on $\Rd$, i.e.\ 
with $\supp(k) \subset B_R(0) := \{ x \, : \,  |x| \leq R \}$ for some $R>0$. 
For such a function $f\in\CcRd$ the notion of an integral, $\intRd f(t) \, dt$, is well-defined by Riemann integration, and thus 
this (infinite-dimensional) linear space of functions can be
endowed with many different norms, such as the maximum-norm or uniform-norm,
$\|k\|_\infty = \sup_{t \in \Rdst} |f(t)| $ and 
the $p$-norms $\|k\|_p = (\intRd|k(t)|^p\,dt)^{1/p}$ for
$ 1 \leq p < \infty$.  The \emph{completion} of $\CcRd$ with respect to the $p$-norm yields the Lebesgue spaces, $\LpRdN$. Most notably are $\LiRd$ and $\LtRd$. The latter being a Hilbert space with respect to the inner-product $\langle f,g\rangle = \intRd f(t) \, \overline{g(t)} \, dt$.

\noindent For complex-valued functions $f,g$ on $\Rd$ we define the following operations,
\begin{enumerate}
    \item[] point-wise multiplication, $(f\cdot g)(t) = f(t) \cdot g(t)$, $t\in\Rd$,
    \item[] flip operation, $f^{\checkmark}\!(t) = f(-t)$,
    \item[] complex conjugation, $\overline{f}(t) = \overline{f(t)}$,
    \item[] translation by $x\in\Rd$, $T_xf(t) = f(t-x)$,
    \item[] modulation by $\omega\in\Rd$, $E_{\omega}f(t) = e^{2\pi i \omega\cdot t} \, f(t)$,  
    \item[] dilation by an invertible $d\times d$ matrix $A$, $\alpha_{A}f(t) = \vert \det (A) \vert^{1/2}\, f(At)$,
    \item [] specifically homogeneous dilations for $\rho > 0$,
     \newline
     $\quad [\Strho f](t) =  \rho^{-d} f(t/\rho), $  and 
     $[\Drho h](t) = h(\rho t)$ \newline
     with $\|  \Strho f\|_1 = \|f\|_1 $
     and $\| \Drho f\|_\infty = \|f\|_\infty.$
 \end{enumerate}

 \noindent Let $\wedge$ be the \emph{tent}-function given  by
\[ \wedge(t) = \prod_{j=1}^{d} \max\big( 1 - 2 \vert t^{(j)} \vert , 0 \big), \ \ t = (t^{(1)},t^{(2)},\ldots, t^{(d)})\in\Rd.\]
Observe that $\supp\, \wedge = [-1/2,1/2]^{d}$. We define the family of functions $(\psi_{n})_{n\in\bZ^{d}}$ 
to be the collection of half-integer translates of $\wedge$, so that 
 \begin{equation} \label{eq:our-bupu} \psi_{n}(t) = \wedge\big(t-\tfrac{1}{2}n\big), \ t\in\Rd, \ n\in\bZ^{d}.\end{equation}
The crucial properties of the functions $(\psi_{n})$ are for us that they satisfy the general assumptions of a \emph{bounded uniform partition of unity} (BUPU), of which we give the definition below. Throughout this work $(\psi_{n})$ will always refer to the functions in \eqref{eq:our-bupu}. However, any other BUPU can also be used, which entails only minor modifications to our proofs. 

For most applications {\it regular BUPUs} will be sufficient (and easier to handle), which are obtained as translates of one (smooth) function with compact support along some lattice in 
$\Rdst$. In this setting it is natural to use smooth BUPUs with respect to some lattice $\Lambda = {\bf A} \Zdst$, for some non-singular $d \times d$  matrix ${\bf A}$. For convenience of notation we use mostly lattices of the form $\gamma \Zdst$,
for some $\gamma > 0$. 
\begin{definition} \label{regFL1BUPU}
A family $\Psi = (\psi_k)_{k \in \Zdst} = (T_{\gamma k} \psi_0)_{k \in \Zdst} $ in $\CcRd$ (for some $\gamma > 0$) 
 is called a {\it regular, uniform
 partition of unity}
 on $\Rdst$ of size $R$,
(we write $\vert \Psi\vert \le R$ or $\operatorname{diam}{\Psi} \leq R$) 
 if  
\begin{enumerate}\label{regBUPU1}
\item  $\psi_0$ is compactly supported in $B_R(0)$. \footnote{$B_R(0)$ is the ball of radius $R > 0$ around zero in $\Rdst$.} 
\item  $\sum_{k \in \Zdst}  \psi_k(x)  
= \sum_{k \in \Zdst}  \psi_0 (x - \gamma k )   \equiv 1$ on $\Rdst$.
\end{enumerate}
\end{definition}
Usually it is assumed that $\psi_0(x) \geq 0$.

\section{Continuous functions that vanish at infinity}
\label{sec:CO}

The uniform or sup-norm of functions on $\Rd$ is defined by $ 
\Vert f \Vert_{\infty} = \sup_{t\in\Rd} \vert f(t) \vert.$ 

Observe that $\CbRd$, the space of all bounded, continuous, complex-valued
functions on $\Rdst$ is a Banach algebra with respect to this
norm and pointwise multiplication. 
It is easy to show that $(\CcRd, \Vert \, \cdot \, \Vert_{\infty})$ is not complete. Its completion in $\CbRdN$, which is the same as the closure within $\CbRdN$, is just the space of continuous functions that vanish at infinity. We denote this space by $\CORdN$. 
For $f \in \CORd$ and $h \in \CbRd$ the pointwise product $f\cdot h$ is again  in $\CORd$.  
In particular,  $\CORdN$ is itself  a (commutative) Banach algebra with respect to pointwise multiplication, with 
\begin{equation}  \label{COCbmult} 
\Vert f \cdot h \Vert_{\infty} \le \Vert f \Vert_{\infty} \, \Vert h \Vert_{\infty}.
\end{equation}

We \emph{define} the space of \emph{bounded measures} $\MbRd$ to be the continuous (Banach space) dual of $\CORdN$. That is, $\MbRd=\COPRd$ consists of all linear and continuous functionals $\mu : \CORd \to \bC$. We write the action of a functional $\mu\in\MbRd$ on a function $f\in\CORd$ as $\mu(f)$. Naturally,
$\MbRd$ is a Banach space with respect to the operator norm,
\begin{equation} \label{MeasNorm1} 
 \Vert \mu \Vert_{\Mbsp} = \sup_{{f\in\CORd,  \, \, \Vert f \Vert_{\infty} \le 1}} \big\vert \mu(f) \big\vert . 
 \end{equation} 

There are two simple and natural examples of bounded measures.
First of all the Dirac measure (or Dirac delta) of the form $\delta_x: f \mapsto f(x)$, $x\in\Rd$.\footnote{What we denote by $\delta_{x}$ is often called the Dirac delta \emph{function} and denoted by $\delta_{x}(t)$ or $\delta(t-x)$ (the argument indicating that it is a ``function'' of, e.g., a time-variable $t$). We do not view the Dirac delta in this way.}
 Their finite linear combinations are called {\it finite
discrete measures} and belong also to $\MbRd$. 

Secondly, any function $g\in\CcRd$ defines a bounded measure $\mu_{g}$ by
\begin{equation}
    \label{eq:1609a} \mu_{g}: \CORd \to\bC, \quad \mu_{g}(f) = \intRd f(t) \, g(t) \, dt, \ \ f\in\CORd.
\end{equation} 
This integral is well defined as $f\cdot g\in\CcRd$.

We mention the following operations that one can do with bounded measures: we define the product of a bounded measure $\mu\in\MbRd$ with a function $h \in \CbRd$ to be the bounded measure given by

\begin{equation}  \label{ptmultmeas} 
\big( \mu \cdot h \big)(f) := \mu ( h \cdot f) \ \ \text{for all} \ \ f \in \CORd.
\end{equation} 
Observe that $\Vert \mu \cdot h \Vert_{\Mbsp} \le \Vert h \Vert_{\infty} \, \Vert \mu \Vert_{\Mbsp},$ and of course associativity. 

Furthermore, we define the complex conjugation of a bounded measure, its flip, translation,  modulation and dilation to be, for any $\mu\in\MbRd$ and $f\in\CORd$,
\begin{align*}
     \overline{\mu} (f) & = \overline{\mu(\overline{f})},\\
     \mu^{\checkmark}\!(f) & = \mu(f^{\checkmark}\!),\\
     \big(T_{x}\mu\big)(f) & = \mu(T_{-x}f), \ x\in \Rd,\\
     \big(E_{\omega}\mu\big)(f) & = \mu(E_{\omega}f), \ \omega\in\Rd, \\
     \big(\alpha_{A} \mu)(f) & = \mu(\alpha_{A^{-1}} f), \ A\in \textnormal{GL}_{\RR}(d). 
\end{align*}

The reader may verify consistency with the corresponding
operators defined on ordinary functions, i.e.\ that for any $g\in\CcRd$
\[ \overline{\mu_{g}} = \mu_{\overline{g}}, \ \ (\mu_{g})^{\checkmark} \!= \mu_{g^{\checkmark}}, \ \ T_{x}\mu_{g} = \mu_{T_{x}g}, \ \ E_{\omega}\mu_{g} = \mu_{E_{\omega}g}, \ \ \alpha_{A}\mu_{g}=\mu_{\alpha_{A}g}.\]


Furthermore, one has the following rather natural rules: 
\[ T_{y}\delta_{x} = \delta_{x+y}, \ \ \delta_{x}^{\checkmark} = \delta_{-x}, \ \ \overline{\delta_{x}} = \delta_{x}, \ \ \delta_{x}\cdot h = h(x) \cdot \delta_{x}.\]

Finally we define $\mu*f$ to be the convolution of a function $f\in \CORd$ with a measure $\mu\in\MbRd$. It is a new function on $\Rd$ given pointwise by 
\begin{equation}  \label{defmuconvf} 
\big(\mu*f\big)(x) = \mu( T_{x}[f^{\checkmark}]) = \big( T_{-x}\mu\big)(f^{\checkmark}\!), \ \ x\in\Rd.\end{equation} 

Observe that $\delta_{x}*f = T_{x}f$. This correspondence is in fact the reason why the ``moving average'' described in (\ref{defmuconvf}) 
makes use of the flip-operator. 

\begin{theorem} \label{th:convolution-CO}
For any $\mu\in\MbRd$ and any $f\in\CORd$ the convolution product $\mu*f$ is a function in $\CORd$. Moreover,
$C_\mu: f \mapsto \mu \ast f$ is a bounded operator
\[ \Vert \mu * f \Vert_{\infty} \le \Vert \mu \Vert_{\Mbsp} \, \Vert f \Vert_{\infty}, \quad 
 f \in \CORd,\]
which  commutes with
translations, i.e.\ $ \mu \ast (T_x f) = T_x (\mu \ast f)$
for all $x \in \Rdst$. Moreover, the operator norm of 
$C_\mu$ equals the functional norm of $\mu$. 
\end{theorem}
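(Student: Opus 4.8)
The plan is to dispose of the three ``soft'' assertions directly and to isolate the decay at infinity as the one step that needs real work. \emph{Norm bound and covariance:} from the definition $(\mu*f)(x)=\mu(T_x[f^{\checkmark}])$ and the fact that $T_x$ and the flip are isometries of $\CORdN$, I get at once $|(\mu*f)(x)|\le \|\mu\|_{\Mbsp}\,\|T_x[f^{\checkmark}]\|_\infty=\|\mu\|_{\Mbsp}\|f\|_\infty$, which is both the stated bound and the inequality $\|C_\mu\|\le\|\mu\|_{\Mbsp}$ for the operator norm. The covariance is a one-line computation: since $(T_xf)^{\checkmark}=T_{-x}(f^{\checkmark})$, evaluating at $y$ gives $(\mu*(T_xf))(y)=\mu(T_yT_{-x}[f^{\checkmark}])=\mu(T_{y-x}[f^{\checkmark}])=(\mu*f)(y-x)$, i.e.\ $\mu*(T_xf)=T_x(\mu*f)$. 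For the equality of norms only the reverse inequality remains, and it follows by testing at the origin: $\|\mu*f\|_\infty\ge|(\mu*f)(0)|=|\mu(f^{\checkmark})|$, and as $f$ ranges over the unit ball of $\CORd$ so does $f^{\checkmark}$ (the flip being an isometry), whence $\sup_{\|f\|_\infty\le1}\|\mu*f\|_\infty\ge\|\mu\|_{\Mbsp}$.

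It remains to show $\mu*f\in\CORd$, i.e.\ continuity plus decay. Continuity is clean: every $h\in\CORd$ is uniformly continuous (continuous and vanishing at infinity), so the orbit map $x\mapsto T_xh$ is continuous from $\Rdst$ into $\CORdN$; composing $x\mapsto T_x[f^{\checkmark}]$ with the bounded functional $\mu$ makes $x\mapsto(\mu*f)(x)$ continuous, so $\mu*f\in\CbRd$. For the decay I first reduce to compactly supported arguments: since $\CcRd$ is dense in $\CORd$ and $C_\mu$ is bounded by the estimate above, $\mu*f$ is a uniform limit of functions $\mu*g$ with $g\in\CcRd$, and $\CORd$ is closed in $\CbRd$, so it suffices to treat $g\in\CcRd$. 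If $\supp g\subset B_R(0)$, then $T_x[g^{\checkmark}]$ is supported in $B_R(x)$, which leaves any fixed ball once $|x|$ is large. The decisive ingredient is therefore a tightness property of bounded measures, which I expect to be the main obstacle: for every $\epso$ there is $R_0$ with $|\mu(h)|\le\epsilon\|h\|_\infty$ for all $h\in\CORd$ supported in $\Rdst\setminus B_{R_0}(0)$. Granting this, $|(\mu*g)(x)|\le\epsilon\|g\|_\infty$ whenever $|x|>R_0+R$, which gives the decay and completes the proof that $\mu*g\in\CORd$.

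The hard part is thus the tightness statement, and I would prove it by a gliding-hump argument that stays inside the present functional-analytic framework, avoiding any measure theory. If tightness failed there would be $\epsilon_0>0$ and, for arbitrarily large radii, functions $h$ of norm one supported far out with $|\mu(h)|>\epsilon_0$; truncating each $h$ by a continuous cutoff (legitimate since $h$ vanishes at infinity and $\mu$ is bounded, so the error in $\mu(h)$ is small) I may take these functions compactly supported, and by choosing the radii to increase fast enough I obtain $(g_k)\subset\CcRd$ with pairwise disjoint supports, $\|g_k\|_\infty\le1$, and $|\mu(g_k)|>\epsilon_0/2$. Picking unimodular scalars $c_k$ with $c_k\mu(g_k)=|\mu(g_k)|$, disjointness of supports yields $\|\sum_{k=1}^{K}c_kg_k\|_\infty\le1$ while $\mu\big(\sum_{k=1}^{K}c_kg_k\big)=\sum_{k=1}^{K}|\mu(g_k)|>K\epsilon_0/2$, which contradicts $|\mu(\,\cdot\,)|\le\|\mu\|_{\Mbsp}\|\,\cdot\,\|_\infty$ for $K$ large. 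This contradiction establishes tightness, and with it all claims of the theorem follow.
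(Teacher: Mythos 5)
Your proof is correct and complete; note that the paper itself does not prove Theorem \ref{th:convolution-CO} but defers to lecture notes, so there is no in-text argument to compare against. You correctly identify that the norm bound, translation covariance, and the equality of norms (via evaluation at the origin and the fact that the flip is an isometric bijection of the unit ball) are immediate from the definition $(\mu*f)(x)=\mu(T_x[f^{\checkmark}])$, and that the only substantive issue is membership of $\mu*f$ in $\CORd$. The reduction to $g\in\CcRd$ by density and closedness of $\CORd$ in $\CbRd$ is clean, and the crux --- the concentration (tightness) property of bounded functionals on $\CORdN$ --- is exactly the right lemma to isolate; your gliding-hump proof of it is a standard and valid argument that stays within the paper's measure-theory-free framework. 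It is worth observing that the same tightness statement also falls out of the corollary to Theorem \ref{th:Mb-by-finite-measure}: taking a plateau function $p=\sum_{n\in F}\psi_n$ with $\Vert \mu-\mu\cdot p\Vert_{\Mbsp}<\varepsilon$, any $h$ supported outside a neighbourhood of $\supp p$ satisfies $\mu(h)=(\mu-\mu\cdot p)(h)$, hence $\vert\mu(h)\vert\le\varepsilon\Vert h\Vert_\infty$. Your self-contained argument avoids the forward reference to that result (which appears later in the same section), at the cost of rederiving the concentration phenomenon from scratch; either route is legitimate.
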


One can in fact show that every continuous operator $T:\CORd\to\CORd$ that satisfied the commutation relation $T \circ T_{x} = T_{x}\circ T$ for all $x\in \Rd$ is given by an operator that convolves with some uniquely determined measure $\mu\in\CbRd$. A proof of this statement and Theorem \ref{th:convolution-CO} can be found in the first author's lecture notes.\footnote{See the lectures notes on ``Harmonic and Functional Analysis'' at \newline  \texttt{https://www.univie.ac.at/nuhag-php/home/skripten.php}}
 Such an operator is also called a {\it translation invariant linear system} (TILS). For more on this, see Section \ref{sec:TILS}.

\begin{definition}
Given $f \in \CbRd$  and $\delta > 0 $ we define the
oscillation function
\begin{equation}  \label{oscdef01} 
 \osc_\delta(f) \ofp{x} :=  \max_{|y| \leq \delta} |f(x) - f(x+y)|.  
\end{equation} 
We also define the {\it local maximal function} 
for any $f \in \CbRd$,
\begin{equation}\label{deflocmax01}
  f^{\#}(x) = \max_{|y| \leq 1}  |f(x+y)|, \quad x \in \Rdst.
\end{equation}
\end{definition}

There are a couple of harmless but useful pointwise estimates:
\begin{lemma} \label{elemest03} For any two functions $f,f_{1},f_{2}\in \CbRd$ one has that
\begin{enumerate}
\item[(i)] 
$ \oscd(f) \leq 2 f^\# $; 
\item[(ii)] $ \oscd (f_1 + f_2)  \leq \oscd(f_1) + \oscd(f_2); $
\item[(iii)] $ |f| \leq |g|  \Rightarrow f^\# \leq g^\#;  $ 
\item[(iv)] $ (f_1 + f_2)^\# \leq f_1^\# + f_2^\#; $
\item[(v)] $\oscd(T_x f) = T_x \oscd(f);$
\item[(vi)] $ (T_x f)^\# =   T_x(f^\#). $ 
\end{enumerate}     
\end{lemma}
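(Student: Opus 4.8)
The plan is to treat all six items as pointwise statements at a fixed $x \in \Rdst$, reducing each to one of three elementary properties of the maximum over a closed ball in $\Rdst$: monotonicity, subadditivity (the maximum of a sum is at most the sum of the maxima), and invariance of the defining ball under the substitution used to absorb a translation. No functional-analytic input is needed; everything rests on the triangle inequality for $|\cdot|$ on $\bC$ together with these three facts about $\max$. I would organize the six parts into three groups accordingly.

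First I would dispose of the $f^{\#}$-estimates (iii) and (iv), since they are the cleanest. For (iii), fixing $x$ and using $|f(x+y)| \le |g(x+y)|$ for every $y$ with $|y| \le 1$, monotonicity of the maximum over $\{|y| \le 1\}$ immediately gives $f^{\#}(x) \le g^{\#}(x)$. For (iv), the triangle inequality yields $|(f_1+f_2)(x+y)| \le |f_1(x+y)| + |f_2(x+y)| \le f_1^{\#}(x) + f_2^{\#}(x)$ for each admissible $y$, and taking the maximum over $|y| \le 1$ on the left preserves the bound. The oscillation subadditivity (ii) follows by the same pattern: write the increment of $f_1 + f_2$ as the sum of the increments of $f_1$ and $f_2$, apply the triangle inequality, bound each summand by the corresponding $\oscd$, and then take the maximum over $|y| \le \delta$.

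For (i) I would start from $|f(x) - f(x+y)| \le |f(x)| + |f(x+y)|$. Since $0$ lies in the ball $\{|y| \le 1\}$ we have $|f(x)| \le f^{\#}(x)$, and for $|y| \le \delta \le 1$ also $|f(x+y)| \le f^{\#}(x)$; hence each increment is at most $2 f^{\#}(x)$, and the maximum over $|y| \le \delta$ gives $\oscd(f)(x) \le 2 f^{\#}(x)$. The one point demanding care here is the implicit normalization $\delta \le 1$, which is what guarantees that the shifted point $x+y$ stays inside the unit ball governing $f^{\#}$; this is the only mild obstacle in an otherwise mechanical argument.

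Finally, the two translation identities (v) and (vi) are exact equalities obtained by a change of variable inside the maximum. For (vi), at a point $z$ one computes $(T_x f)^{\#}(z) = \max_{|y| \le 1} |f(z - x + y)| = f^{\#}(z - x) = T_x(f^{\#})(z)$, the middle step being just the definition of $f^{\#}$ evaluated at $z - x$; crucially the maximum is taken over the \emph{same} ball $\{|y| \le 1\}$ before and after the substitution, so no distortion occurs. Identity (v) follows identically, with $\{|y| \le 1\}$ replaced by $\{|y| \le \delta\}$ and the modulus of a single value replaced by the modulus of an increment. I expect no serious difficulty in any step: the work is entirely bookkeeping of the maxima, the only genuine subtlety being the $\delta \le 1$ convention needed for (i).
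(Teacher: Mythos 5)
Your proof is correct, and since the paper leaves this lemma entirely as an exercise there is no competing argument to compare against: your reduction of all six items to monotonicity, subadditivity, and translation-invariance of the pointwise maximum over a fixed ball is exactly the intended elementary bookkeeping. Your one substantive observation --- that (i) requires the normalization $\delta \le 1$ so that the shifted point $x+y$ stays inside the unit ball defining $f^{\#}$ --- is a genuine fine point the paper's statement glosses over; it is harmless in context, since the lemma is only invoked as $\delta \to 0$ (e.g.\ in Lemma \ref{useosc022}), but you were right to flag it.
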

\begin{proof}
The proof is left as an exercise to the reader.
\end{proof}
Using these relations, the following is a simple observation.

\begin{lemma} \label{useosc02} 
   A function $f \in \CbRd$ is uniformly continuous if and only if 
  \[ \infnorm{\oscd(f)} \to 0 \ \ \text{for} \ \ \delta \to 0.\]
\end{lemma}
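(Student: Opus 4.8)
The plan is to observe that $\infnorm{\oscd(f)}$ is nothing but the classical (two-variable) modulus of continuity of $f$ at scale $\delta$, after which both implications are immediate unfoldings of the definition of uniform continuity. The key reformulation I would establish first is
\[ \infnorm{\oscd(f)} = \sup\{\, |f(u) - f(v)| : u,v \in \Rd, \ |u-v| \leq \delta \,\}, \]
obtained from \eqref{oscdef01} by the substitution $u = x$, $v = x+y$: as $x$ ranges over $\Rd$ and $|y|\leq\delta$, the pairs $(u,v)$ sweep out exactly all pairs at distance at most $\delta$. (The supremum is finite, so $\oscd(f)\in\CbRd$, by Lemma \ref{elemest03}(i) together with $f^{\#} \leq \infnorm{f}$, but finiteness is not actually needed for the argument.) I would also record that $\delta \mapsto \infnorm{\oscd(f)}$ is monotone non-decreasing, since enlarging $\delta$ enlarges the ball over which the maximum in \eqref{oscdef01} is taken. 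Consequently the statement ``$\infnorm{\oscd(f)} \to 0$ as $\delta \to 0$'' is equivalent to the single-threshold formulation: for every $\varepsilon > 0$ there exists $\delta_0 > 0$ with $\infnorm{\osc_{\delta_0}(f)} \leq \varepsilon$.

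For the forward direction I would assume $f$ uniformly continuous and fix $\varepsilon > 0$; by definition there is $\delta_0 > 0$ such that $|u - v| \leq \delta_0$ forces $|f(u) - f(v)| \leq \varepsilon$. Then for any $\delta \leq \delta_0$, any $x \in \Rd$, and any $|y| \leq \delta$ one has $|x - (x+y)| = |y| \leq \delta_0$, hence $|f(x) - f(x+y)| \leq \varepsilon$; taking the maximum over $|y|\leq\delta$ yields $\oscd(f)(x) \leq \varepsilon$, and the supremum over $x$ gives $\infnorm{\oscd(f)} \leq \varepsilon$. Since $\varepsilon$ was arbitrary (and using the monotonicity above), this is precisely $\infnorm{\oscd(f)} \to 0$.

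For the converse I would assume $\infnorm{\oscd(f)} \to 0$, fix $\varepsilon > 0$, and choose $\delta_0$ with $\infnorm{\osc_{\delta_0}(f)} \leq \varepsilon$. Given $u,v$ with $|u - v| \leq \delta_0$, set $y = v - u$, so $|y| \leq \delta_0$ and $v = u+y$, whence
\[ |f(u) - f(v)| = |f(u) - f(u+y)| \leq \osc_{\delta_0}(f)(u) \leq \infnorm{\osc_{\delta_0}(f)} \leq \varepsilon, \]
which is exactly the definition of uniform continuity. I do not expect a genuine obstacle here: the only points requiring care are the bookkeeping of the quantifiers across the two directions and the identification in the first display—in particular noting that the maximum over the compact ball $\{|y|\leq\delta\}$ is attained (the map $y \mapsto |f(x)-f(x+y)|$ is continuous) and agrees with the two-variable modulus of continuity. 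Everything beyond that is a direct translation between the two standard descriptions of uniform continuity.
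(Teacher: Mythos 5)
Your proof is correct and is exactly the argument the paper has in mind: the paper states Lemma~\ref{useosc02} as ``a simple observation'' following Lemma~\ref{elemest03} and omits the proof, and your identification of $\infnorm{\oscd(f)}$ with the two-variable modulus of continuity, followed by the two quantifier unfoldings, supplies precisely the intended details. The parenthetical remark that boundedness of $\oscd(f)$ alone puts it in $\CbRd$ is slightly loose (continuity would also need checking), but as you note this is not needed anywhere in the argument.
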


For every BUPU $\Psi $ 
we define the spline-type {\it quasi interpolation operator}  
\begin{equation} \label{splinqint1}
f \mapsto \SpPsi f: \quad 
 \SpPsi f (t) = \sum_{n\in\bZ^{d}} f(t_{n}) \psi_{n}(t), \ \ t\in\Rd.
\end{equation} 
\begin{lemma} \label{splineapp2} 
For any regular BUPU $\Psi$ the operator $\SpPsi$ maps $\CORd$ and $\CbRd$ onto itself, respectively,
with $\| \SpPsi f \|_\infty \leq \|f\|_\infty.$   
One has $ \| \SpPsi f    - f \|_\infty \to 0$ as 
$\operatorname{diam}(\Psi) \to 0$
if and only if $f$ is uniformly continuous (e.g.\ $f \in \CORd$).
\end{lemma}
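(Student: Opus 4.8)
The plan is to establish the three assertions in turn: the norm bound together with the mapping properties, then the two implications of the stated equivalence. Throughout I write $t_{n}$ for the node attached to $\psi_{n}$, so that $t_{n}=\gamma n$ for the regular BUPU $\Psi=(T_{\gamma n}\psi_{0})_{n\in\Zdst}$ and $t_{n}\in\supp\psi_{n}$ in general.

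First I would settle the norm estimate and the fact that $\SpPsi$ preserves each space. The crucial structural facts are that $\psi_{n}\ge 0$ and that $\sum_{n}\psi_{n}\equiv 1$ (Definition \ref{regFL1BUPU}). Hence for each fixed $t$ the value $\SpPsi f(t)=\sum_{n}f(t_{n})\psi_{n}(t)$ is a convex combination of the numbers $f(t_{n})$, which immediately gives $|\SpPsi f(t)|\le\|f\|_{\infty}\sum_{n}\psi_{n}(t)=\|f\|_{\infty}$ and therefore $\infnorm{\SpPsi f}\le\infnorm{f}$. Continuity of $\SpPsi f$ follows because on every bounded set only finitely many $\psi_{n}$ are non-zero (the supports $\supp\psi_{n}\subseteq B_{R}(\gamma n)$ are locally finite), so $\SpPsi f$ is locally a finite sum of continuous functions. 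For the vanishing at infinity, if $f\in\CORd$ and $\epso$ is given, pick $M$ with $|f(s)|<\varepsilon$ for $|s|>M$; then for $|t|>M+\operatorname{diam}(\Psi)$ every index $n$ contributing to $\SpPsi f(t)$ has $|t_{n}|\ge|t|-\operatorname{diam}(\Psi)>M$, whence $|\SpPsi f(t)|\le\varepsilon$. Thus $\SpPsi$ maps $\CORd$ into $\CORd$ and $\CbRd$ into $\CbRd$.

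For the forward implication I would exploit the partition-of-unity identity once more to center the error: since $\sum_{n}\psi_{n}(t)=1$,
\[
 \SpPsi f(t)-f(t)=\sum_{n}\bigl(f(t_{n})-f(t)\bigr)\psi_{n}(t).
\]
Only indices with $\psi_{n}(t)\ne 0$ contribute, and for these both $t$ and $t_{n}$ lie in $\supp\psi_{n}$, so $|t_{n}-t|\le\operatorname{diam}(\Psi)$ and consequently $|f(t_{n})-f(t)|\le\osc_{\operatorname{diam}(\Psi)}(f)(t)$. Bounding termwise and using $\sum_{n}\psi_{n}(t)=1$ yields the pointwise estimate $|\SpPsi f(t)-f(t)|\le\osc_{\operatorname{diam}(\Psi)}(f)(t)$, hence $\infnorm{\SpPsi f-f}\le\infnorm{\osc_{\operatorname{diam}(\Psi)}(f)}$. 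If $f$ is uniformly continuous, Lemma \ref{useosc02} shows the right-hand side tends to $0$ as $\operatorname{diam}(\Psi)\to 0$; since functions in $\CORd$ are uniformly continuous, this also covers the case $f\in\CORd$.

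For the converse I would argue that each $\SpPsi f$ is itself uniformly continuous for a fixed BUPU, and then invoke the fact that a uniform limit of uniformly continuous functions is uniformly continuous. Uniform continuity of $\SpPsi f$ follows from two observations: $\psi_{0}$ is compactly supported and continuous, hence uniformly continuous with some modulus $\omega_{\psi_{0}}$, and every point of $\Rd$ lies in the support of at most a fixed number $N=N(R,\gamma,d)$ of the $\psi_{n}$. Using $\psi_{n}=T_{\gamma n}\psi_{0}$ together with the centering trick above, one estimates for small $|t-s|$ that $|\SpPsi f(t)-\SpPsi f(s)|\le\infnorm{f}\sum_{n}|\psi_{n}(t)-\psi_{n}(s)|\le N\,\infnorm{f}\,\omega_{\psi_{0}}(|t-s|)$, which gives uniform continuity. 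Taking any sequence of BUPUs with $\operatorname{diam}\to 0$, the hypothesis $\infnorm{\SpPsi f-f}\to 0$ then exhibits $f$ as a uniform limit of uniformly continuous functions, so $f$ is uniformly continuous. I expect the main obstacle to be this converse, and within it the claim that $\SpPsi f$ is uniformly continuous for a fixed BUPU: that is where the geometric finiteness of the overlap of the supports (the constant $N$) must be combined with the uniform continuity of the single generator $\psi_{0}$. The forward direction, by contrast, is an essentially immediate consequence of the partition-of-unity identity and Lemma \ref{useosc02}.
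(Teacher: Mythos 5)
Your proof is correct, and on the forward direction it coincides with the paper's argument: both you and the authors use $\sum_{n}\psi_{n}\equiv 1$ to center the error as $\SpPsi f-f=\sum_{n}\bigl(f(t_{n})-f\bigr)\psi_{n}$, bound it pointwise by $\osc_{\delta}(f)$ with $\delta=\operatorname{diam}(\Psi)$, and then invoke Lemma~\ref{useosc02}. Where you genuinely go beyond the paper is the converse implication: the paper's proof stops after the oscillation estimate, which only yields the ``if'' half of the stated equivalence, and never explains why $\|\SpPsi f-f\|_{\infty}\to 0$ forces $f$ to be uniformly continuous. Your route --- each $\SpPsi f$ is uniformly continuous because only a bounded number of the translates $T_{\gamma n}\psi_{0}$ are active near any pair of points and $\psi_{0}$ has a modulus of continuity, so the hypothesis exhibits $f$ as a uniform limit of uniformly continuous functions --- is the natural way to close that gap (up to the harmless replacement of $N$ by $2N$ in your displayed estimate, since an index can contribute by meeting either $t$ or $s$). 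Your treatment of the first assertion (convex-combination bound from $\psi_{n}\ge 0$ and $\sum_{n}\psi_{n}=1$, local finiteness of the supports for continuity, and the decay argument showing $\SpPsi$ preserves $\CORd$) is likewise more explicit than the paper's one-line remark; the only caveat is that ``onto'' in the statement should be read as ``into'' --- neither you nor the paper proves surjectivity, and it is not what is intended.
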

\begin{proof} The first statement follows easily from the fact that all $\psi_{n}$ are continuous and compactly supported together with the assumed properties of the function $f$.
For the second statement note that we only have to do a pointwise estimate between  $f(t)$  and  $\SpPsi f(t) = \sum_{n \in \Zdst} \psi_n(t_{n}) f(t)$,
where $I\subset \bZ^{d}$ is such that $\supp \,\psi_{n} \cap B_\delta(t)\ne \emptyset$ for all $n \in \Zdst$. 
Using the fact that the $(\psi_{n})$ form a partition of unity, we establish that
\begin{align*}  | \SpPsi f(t) -  f(t)|
& \le  \sum_{n \in \Zdst}  |f(t_n) - f(t)| \cdot \psi_n(t) \end{align*}
If $\Psi$ is a BUPU such that $\vert t - t_n \vert \le \delta$ for all $t\in \supp(\psi_n)$, then we find that
\[ | \SpPsi f(t) -  f(t)| \le \oscd(f)(t).\]
As the support of the functions in the BUPU $\Psi$ is made smaller, we write $|\Psi| \to 0$, $\delta$ go to zero. By Lemma \ref{useosc02}
we conclude that $\Vert \SpPsi f  - f \Vert_{\infty} \to 0$ as $\vert K\vert\to 0$.\end{proof}

One important result that we need for later is the following one.
We give a proof of Theorem \ref{th:Mb-by-finite-measure} at the end of 
this section.
\begin{theorem} \label{th:Mb-by-finite-measure} Let $\Psi=(\psi_{n})_{n\in\bZ^{d}}$ be the BUPU as in \eqref{eq:our-bupu}. Every $\mu\in\MbRd$ can be represented by the absolutely norm convergent series $\mu = \sum_{n\in\bZ^{d}} \mu \cdot \psi_{n}$. Moreover,
\begin{equation} \label{sumPsimu} 
\Vert \mu \Vert_{\Mbsp} 
= \sum_{n\in\bZ^{d}} \Vert \mu\cdot \psi_{n} \Vert_{\Mbsp}.
\end{equation} 
\end{theorem}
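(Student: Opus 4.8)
The plan is to prove the two assertions separately, but both rest on the two defining features of the BUPU $(\psi_{n})$: each $\psi_{n}$ is a translate of the tent function $\wedge$, hence \emph{nonnegative}, and $\sum_{n\in\bZ^{d}}\psi_{n}\equiv 1$ (so every partial sum $\sigma_{F}:=\sum_{n\in F}\psi_{n}$ satisfies $0\le\sigma_{F}\le 1$). I would \emph{first} establish the summability estimate $\sum_{n\in\bZ^{d}}\Vert\mu\cdot\psi_{n}\Vert_{\Mbsp}\le\Vert\mu\Vert_{\Mbsp}$, which at the same time shows the series in question is absolutely norm convergent. Fix a finite $F\subset\bZ^{d}$ and $\epsilon>0$. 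For each $n\in F$ choose $f_{n}\in\CORd$ with $\Vert f_{n}\Vert_{\infty}\le 1$ and $\vert\mu(\psi_{n}f_{n})\vert\ge\Vert\mu\cdot\psi_{n}\Vert_{\Mbsp}-\epsilon/\vert F\vert$; after multiplying $f_{n}$ by a suitable unimodular constant we may assume $\mu(\psi_{n}f_{n})=\vert\mu(\psi_{n}f_{n})\vert\ge 0$. The key trick is to \emph{glue} these into the single test function $g:=\sum_{n\in F}\psi_{n}f_{n}\in\CcRd\subset\CORd$. Since $\psi_{n}\ge 0$, $\vert f_{n}\vert\le 1$ and $\sum_{n\in F}\psi_{n}\le\sum_{n}\psi_{n}=1$, one gets $\Vert g\Vert_{\infty}\le 1$, while $\mu(g)=\sum_{n\in F}\mu(\psi_{n}f_{n})\ge\sum_{n\in F}\Vert\mu\cdot\psi_{n}\Vert_{\Mbsp}-\epsilon$. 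Hence $\sum_{n\in F}\Vert\mu\cdot\psi_{n}\Vert_{\Mbsp}\le\vert\mu(g)\vert+\epsilon\le\Vert\mu\Vert_{\Mbsp}+\epsilon$, and letting $\epsilon\to 0$ and $F\uparrow\bZ^{d}$ yields the bound.

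Having this, the representation follows by completeness: because $\MbRd$ is a Banach space and $\sum_{n}\Vert\mu\cdot\psi_{n}\Vert_{\Mbsp}<\infty$, the series $\nu:=\sum_{n}\mu\cdot\psi_{n}$ converges unconditionally to some $\nu\in\MbRd$. It then remains to identify $\nu$ with $\mu$, i.e.\ to check $\nu(f)=\mu(f)$ for every $f\in\CORd$. Since evaluation at $f$ is a bounded functional on $\MbRd$, this reduces to showing $\sigma_{F}f\to f$ in the sup-norm as $F\uparrow\bZ^{d}$. Here the vanishing-at-infinity of $f$ is what I would exploit: given $\epsilon>0$ pick $R$ with $\vert f\vert<\epsilon$ off $B_{R}(0)$; once $F$ contains the finitely many indices $n$ with $\supp\psi_{n}\cap B_{R}(0)\ne\emptyset$, one has $\sigma_{F}\equiv 1$ on $B_{R}(0)$, so $\Vert(\sigma_{F}-1)f\Vert_{\infty}\le\epsilon$. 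Consequently $\mu(f)=\lim_{F}\mu(\sigma_{F}f)=\lim_{F}\sum_{n\in F}\mu(\psi_{n}f)=\nu(f)$, which gives $\mu=\sum_{n}\mu\cdot\psi_{n}$.

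Finally, combining this representation with the triangle inequality for absolutely convergent series in $\MbRd$ gives $\Vert\mu\Vert_{\Mbsp}\le\sum_{n}\Vert\mu\cdot\psi_{n}\Vert_{\Mbsp}$, the reverse of the first bound, so the two together produce the equality \eqref{sumPsimu}. I expect the genuine obstacle to be the lower bound $\sum_{n}\Vert\mu\cdot\psi_{n}\Vert_{\Mbsp}\le\Vert\mu\Vert_{\Mbsp}$: the naive per-term estimate $\Vert\mu\cdot\psi_{n}\Vert_{\Mbsp}\le\Vert\psi_{n}\Vert_{\infty}\Vert\mu\Vert_{\Mbsp}=\Vert\mu\Vert_{\Mbsp}$ is hopelessly lossy and does not even yield summability. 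Everything hinges on the gluing construction, which uses both $\psi_{n}\ge 0$ and $\sigma_{F}\le 1$ to hold $\Vert g\Vert_{\infty}\le 1$ while testing $\mu$ against all pieces simultaneously; a general partition of unity without the sign condition would break exactly this step (one would at best lose a factor tied to the overlap multiplicity, here $2^{d}$).
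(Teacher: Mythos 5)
Your proposal is correct and follows essentially the same route as the paper: the decisive gluing construction $g=\sum_{n\in F}\psi_n f_n$ with $\Vert g\Vert_\infty\le 1$ (exploiting $\psi_n\ge 0$ and $\sum_n\psi_n\equiv 1$) is exactly the paper's argument for $\sum_n\Vert\mu\cdot\psi_n\Vert_{\Mbsp}\le\Vert\mu\Vert_{\Mbsp}$, and the reverse inequality is the same triangle-inequality observation. The only cosmetic difference is in identifying the limit of the series: you verify $\sigma_F f\to f$ uniformly for arbitrary $f\in\CORd$ via vanishing at infinity, whereas the paper checks the identity on $\CcRd$ and invokes density; both are fine.
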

\begin{corollary}
For any $\mu\in\MbRd$ and any $\varepsilon > 0$ there exists a finite subset $F_0 \subset \bZ^{d}$ such that $\Vert \mu - \sum_{n\in F} \mu\cdot \psi_{n} \Vert_{\Mbsp} < \varepsilon$ for any finite
subset of $\Zdst$ with $F \supseteq F_0$. 
One can think of $ p = \sum_{n \in F} \psi_{n} \in \CcRd $ as a plateau-type
function with     $\Vert \mu -  \mu\cdot p  \Vert_{\Mbsp} < \varepsilon$. 
\end{corollary}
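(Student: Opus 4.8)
The plan is to read the corollary off Theorem~\ref{th:Mb-by-finite-measure} directly, since that theorem already supplies the substantive content; what remains is a routine tail estimate for an absolutely convergent series, together with the identification of a finite partial sum as a single pointwise product $\mu\cdot p$.

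First I would invoke the theorem to record that the series of nonnegative reals $\sum_{n\in\bZ^{d}} \Vert \mu\cdot\psi_{n}\Vert_{\Mbsp}$ converges, with sum equal to $\Vert\mu\Vert_{\Mbsp} < \infty$. Convergence of a nonnegative series forces its tails to vanish, so for any $\varepsilon > 0$ there exists a finite $F_0 \subset \bZ^{d}$ with $\sum_{n\notin F_0} \Vert \mu\cdot\psi_{n}\Vert_{\Mbsp} < \varepsilon$.

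Next, for an arbitrary finite $F \supseteq F_0$ I would use the representation $\mu = \sum_{n\in\bZ^{d}}\mu\cdot\psi_{n}$ from the theorem. Since this series converges absolutely in the Banach space $\MbRd$, it converges unconditionally, and splitting off the finite index set $F$ gives $\mu - \sum_{n\in F}\mu\cdot\psi_{n} = \sum_{n\notin F}\mu\cdot\psi_{n}$. The triangle inequality then bounds the norm of this remainder by $\sum_{n\notin F}\Vert\mu\cdot\psi_{n}\Vert_{\Mbsp}$, and because $F\supseteq F_0$ forces $\bZ^{d}\setminus F \subseteq \bZ^{d}\setminus F_0$, this tail is no larger than the tail over $F_0$, hence $< \varepsilon$. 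The point worth stating explicitly is this monotonicity in $F$: enlarging $F$ only shrinks the complementary tail, which is exactly what is needed to get the bound uniformly for all finite $F \supseteq F_0$.

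Finally, for the plateau reformulation I would observe that $p = \sum_{n\in F}\psi_{n}$ is a finite sum of continuous, compactly supported functions, hence $p \in \CcRd$. Using the definition \eqref{ptmultmeas} of the product of a measure with a function, together with the linearity of $\mu$ on $\CORd$, one has for every $f\in\CORd$ that $(\mu\cdot p)(f) = \mu(p\cdot f) = \sum_{n\in F}\mu(\psi_{n}\cdot f) = \sum_{n\in F}(\mu\cdot\psi_{n})(f)$, so $\mu\cdot p = \sum_{n\in F}\mu\cdot\psi_{n}$ as elements of $\MbRd$. Substituting this into the previous estimate yields $\Vert\mu - \mu\cdot p\Vert_{\Mbsp} < \varepsilon$. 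I do not expect a genuine obstacle here; the only two steps that merit care are the interchange of the finite sum with the action of $\mu$, which is pure linearity of the functional and associativity of the product, and the reduction of the tail over $F$ to the tail over $F_0$, which uses nothing beyond $F \supseteq F_0$.
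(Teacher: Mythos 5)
Your proof is correct and follows exactly the route the paper intends: the corollary is stated without proof as an immediate consequence of the absolute convergence in Theorem~\ref{th:Mb-by-finite-measure}, and your tail estimate together with the identification $\mu\cdot p=\sum_{n\in F}\mu\cdot\psi_{n}$ via linearity is precisely the intended argument.
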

 {\it {Proof of Theorem \ref{th:Mb-by-finite-measure}.}}
For any given $\epso$ let $\varepsilon_{n}>0$, $n\in\bZ^{d}$ be such that $\sum_{n\in\bZ^{d}} \varepsilon_{n} < \varepsilon$. By the definition of $\Vert \mu \cdot \psi_{n} \Vert_{\Mbsp}$ we can find $f_{n}\in \CORd$, $\Vert f_{n} \Vert_{\infty} \le 1$ such that 
\[ \vert \big( \mu \cdot \psi_{n}\big)(f_{n})\vert > \Vert \mu \cdot \psi_{n} \Vert_{\Mbsp} - \varepsilon_{n}.\]
Without loss of generality, we can assume that $\big( \mu \cdot \psi_{n}\big)(f_{n})$ is real-valued and non-negative. For any finite set $F\subset \bZ^{d}$ we define  $f\in\CcRd$ by $f=\sum_{n\in F} f_{n} \cdot \psi_{n}$. We now observe that 
\begin{align*} \mu(f) & = \sum_{n\in F} \mu( f_{n} \cdot \psi_{n}) = \sum_{n\in F} \big( \mu \cdot \psi_{n}\big)(f_{n}) \\
& > \sum_{n\in F} \big( \Vert \mu \cdot \psi_{n} \Vert_{\Mbsp} - \varepsilon_{n} \big) > \Big( \sum_{n\in F} \Vert \mu \cdot \psi_{n} \Vert_{\Mbsp} \Big) - \varepsilon.\end{align*}
By a simple pointwise estimate we find that $\|f\|_\infty \leq 1$.
Thus that for every $\varepsilon>0$ and any finite set $F\subset \bZ^{d}$ there is a function $f\in \Ccsp(\Rd)$, $\Vert f \Vert_{\infty} \le 1$, such that
\[\sum_{n\in F} \Vert \mu \cdot \psi_{n} \Vert_{\Mbsp} \le \mu(f) + \varepsilon\]
This being true for any $\epso$ and any finite set we conclude that 
\[
\sumnZd  \Vert \mu\cdot \psi_{n} \Vert_{\Mbsp} \le \Vert \mu \Vert_{\Mbsp}.\]
Hence  $\sum_{n\in\bZ^{d}} \mu\cdot\psi_{n}$ is absolutely convergent in $\MbRd$. 
Finally we show that $\mu = \sum_{n\in\bZ^{d}} \mu \cdot \psi_{n}$. For any $f\in \CcRd$ we clearly have
\[ \Big(\sum_{n\in \bZ^{d}} \mu \cdot \psi_{n}\Big)(f) = \sum_{n\in F} \big( \mu \cdot \psi_{n})(f) = \mu\big( \sum_{n\in F} \psi_{n} \cdot f\big) = \mu(f),\]
where $F$ is some finite subset of $\bZ^{d}$ that depends on the support of $f$. Since this equality holds for all $\CcRd$ which is dense in $\CORd$, we get $\mu = \sum_{n\in \bZ^{d}} \mu \cdot \psi_{n}$.
The opposite estimate, namely $\Vert \mu \Vert_{\Mbsp} \le \sum_{n\in\bZ^{d}} \Vert \mu\cdot \psi_{n} \Vert_{\Mbsp}$ is clear by the triangle inequality and the completeness of $\MbRdN$.

\section{The Wiener Algebra on $\Rdst$}
 \label{sec:Wiener}

At this point we are in a situation where we can define pointwise multiplication within the Banach algebra $\CORdN$ and we can convolve a measure with a function $\CORd$. Furthermore, we can multiply any measure with a function in $\CbRd$, always together with the corresponding norm estimates. 
 
 But not every function  $f \in \CORd$ defines a measure and it is not
 possible to define the convolution product of two arbitrary functions
 $f_1,f_2 \in \CORd$. Hence it is desirable to reduce the reservoir of 
 ``test functions'' from $\CORdN$ to a smaller one.
 The first step into this direction will be the introduction of ``our new space of test functions'', the Wiener algebra. It is defined as follows: 
\begin{definition}
Given the BUPU $\Psi = (\psi_n)_{n \in \Zdst}$ in \eqref{eq:our-bupu} the {\it Wiener
algebra} $\Wsp(\Rdst)$ 
consist of all continuous functions $f\in \CbRd$ for which the following norm is finite: 
\begin{equation} \label{wienerdef03}
 \|f\|_\Wsp := \sum_{n \in \Zdst}  \|f  \cdot \psi_n\|_\infty < \infty. 
 \end{equation} 
\end{definition}

One can show that the definition does {\it not} depend 
on the particular choice of the BUPU, i.e.\ different BUPUs  $\Psi^1$ or $\Psi^2$ define the same space. Also, $\WRdN$ is a Banach space.
We mention that an equivalent norm on $\WRd$ is given by
\[ \Vert f \Vert_{\Wsp, \, \sqcap} = \sum_{n\in\bZ^{d}} \Vert f \cdot T_{n} \mathds{1}_{[0,1]^{d}} \Vert_{\infty},\]
where $ \mathds{1}_{[0,1]^{d}}$ is the characteristic function on the set $[0,1]^{d}$.
This is the norm still widely used in the literature, and used in H.~Reiter's book
\cite{re68} as an example
of an interesting Segal algebra (and even going back to N.~Wiener's work on Tauberian theorems). Convolution relations for this (and more general Wiener amalgam spaces) are given in \cite{busc84,fe83} and
\cite{fost85}. 

Observe that for any $f\in \WRd$ and $x\in\Rd$ we have, in general, that $\Vert T_x f \Vert_{\Wsp} \ne \Vert f \Vert_{\Wsp}$. We will not need a norm that is strictly isometric with respect to translation. One way to do this is to introduce the \emph{continuous description} of amalgam norms, which has
been given already in \cite{fe77-3}. 

The Wiener algebra relates to the previously considered function spaces as follows:
All functions in $\WRd$ belong to $\CORd$. The space $\CcRd$ is contained
in $\WRdN$ and $\WRd$ is contained in $\CORdN$, both as dense subspaces.
All the inclusions are in fact continuous embeddings. 
Furthermore, just as $\CORd$ and $\CbRd$, the Wiener algebra behaves well with respect to multiplication.

\begin{lemma} \label{le:1109a} \label{le:1109b}
\begin{enumerate}
    \item[(i)] The Wiener algebra $\Wsp(\Rn)$ is continuously embedded into $\CbRd$ and $\CORd$. Specifically, one has that
\[ \Vert f \Vert_{\infty} \le \Vert f \Vert_{\Wsp} \ \ \text{for all} \ \ f\in \Wsp(\Rd).\]
\item[(ii)] The Wiener algebra is an ideal of $\CbRd$ with respect to pointwise multiplication. In fact, for any $h\in \CbRd$ and $f\in \Wsp(\Rd)$ one has that
\[ \Vert h \cdot f \Vert_{\Wsp} \le \Vert h \Vert_{\infty} \, \Vert f \Vert_{\Wsp}.\]
\item[(iii)]The Wiener algebra is a Banach algebra with respect to pointwise multiplication. For any $f,h\in\Wsp(\Rd)$ we have that $\Vert h \cdot f \Vert_{\Wsp} \le \Vert h \Vert_{\Wsp} \, \Vert f \Vert_{\Wsp}$.
\end{enumerate}
\end{lemma}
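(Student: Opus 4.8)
The plan is to reduce all three claims to pointwise estimates controlled by two structural facts about the BUPU from \eqref{eq:our-bupu}: each $\psi_n$ satisfies $0 \le \psi_n \le 1$ together with $\sum_{n\in\Zdst}\psi_n \equiv 1$ (only finitely many terms being nonzero at any given point, by Definition \ref{regFL1BUPU}), and the sup-norm is submultiplicative, $\Vert g\cdot h\Vert_\infty \le \Vert g\Vert_\infty\,\Vert h\Vert_\infty$, as recorded in \eqref{COCbmult}. With these in hand the three parts follow in the order (i), (ii), (iii), and (iii) will be a one-line consequence of the first two.

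For (i), I would fix $t\in\Rdst$ and expand $|f(t)| = |f(t)|\sum_{n\in\Zdst}\psi_n(t) = \sum_{n\in\Zdst}|f(t)|\,\psi_n(t)$, a finite sum since the $\psi_n$ are locally finite. Because $\psi_n(t)\ge 0$, each summand equals $|f(t)|\,\psi_n(t) = |(f\cdot\psi_n)(t)| \le \Vert f\cdot\psi_n\Vert_\infty$, so $|f(t)| \le \sum_{n\in\Zdst}\Vert f\cdot\psi_n\Vert_\infty = \Vert f\Vert_\Wsp$; taking the supremum over $t$ gives $\Vert f\Vert_\infty \le \Vert f\Vert_\Wsp$ and hence the continuous embedding into $\CbRd$. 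To see that $f$ actually lies in $\CORd$, I would use that the defining series in \eqref{wienerdef03} converges: given $\varepsilon>0$, choose a finite set $F\subset\Zdst$ with $\sum_{n\notin F}\Vert f\cdot\psi_n\Vert_\infty < \varepsilon$; then for $t$ outside the compact set $\bigcup_{n\in F}\supp\,\psi_n$ only the indices $n\notin F$ contribute, and the same estimate yields $|f(t)| < \varepsilon$, so $f$ vanishes at infinity.

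For (ii), the estimate is direct. Writing $(h\cdot f)\cdot\psi_n = h\cdot(f\cdot\psi_n)$ and applying submultiplicativity termwise, $\Vert h\cdot f\Vert_\Wsp = \sum_{n\in\Zdst}\Vert h\cdot(f\cdot\psi_n)\Vert_\infty \le \sum_{n\in\Zdst}\Vert h\Vert_\infty\,\Vert f\cdot\psi_n\Vert_\infty = \Vert h\Vert_\infty\,\Vert f\Vert_\Wsp$. Since $h\cdot f$ is continuous and bounded and this bound is finite, $h\cdot f\in\Wsp(\Rd)$, which is precisely the claimed ideal property.

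Finally, (iii) follows by chaining the previous two parts: for $f,h\in\Wsp(\Rd)$ I would regard $h$ as an element of $\CbRd$, apply (ii), and then bound $\Vert h\Vert_\infty$ by $\Vert h\Vert_\Wsp$ using (i), obtaining $\Vert h\cdot f\Vert_\Wsp \le \Vert h\Vert_\infty\,\Vert f\Vert_\Wsp \le \Vert h\Vert_\Wsp\,\Vert f\Vert_\Wsp$. There is no serious obstacle in this lemma; the only points deserving a line of justification are the local finiteness that legitimizes the pointwise identity in (i) and the normalization $\sum_n\psi_n\equiv 1$ with $\psi_n\ge 0$, both of which are immediate from the tensor-product formula defining $\wedge$ and hence $(\psi_n)$.
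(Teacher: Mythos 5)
Your argument is correct and follows essentially the same route as the paper's own proof: expand $f = \sum_n f\cdot\psi_n$ using the partition of unity for (i), pull out $\Vert h\Vert_\infty$ termwise for (ii), and chain (i) and (ii) for (iii). Your tail-of-the-series argument showing $f$ vanishes at infinity is a welcome addition that the paper's proof leaves implicit.
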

\begin{proof}
(i). By assumption we have  $1 = \sum_{n\in\bZ^{d}} \psi_{n}(x)$ for all $x\in \Rd$. Hence 
\begin{align*}
    \sup_{x\in\Rd} \vert f(x) \vert = \sup_{x\in\Rd} \vert \sum_{n\in\bZ^{d}} f(x) \, \psi_{n}(x) \vert \le \sum_{n\in\bZ^{n}} \Vert f \cdot \psi_{n} \Vert_{\infty} = \Vert f \Vert_{\Wsp} < \infty,
    \quad \forall   f\in \Wsp(\Rd).
\end{align*}
(ii). Let $h$ and $f$ be as in the statement. It follows from the easy estimate
\[ \sum_{n\in\bZ^{d}} \Vert h \cdot f \cdot \psi_{n}\Vert_{\infty} \le \Vert h \Vert_{\infty} \,\sum_{n\in\bZ^{d}} \Vert f  \cdot \psi_{n}\Vert_{\infty} = \Vert h \Vert_{\infty} \, \Vert f \Vert_{\Wsp}. \]
(iii). This follows by (i) and (ii).
\end{proof}

\begin{lemma} \label{le:translation-wiener}
The translation and the modulation operator are continuous on the Wiener algebra $\WRdN$.
In fact,
\[ \Vert T_{x} f \Vert_{\Wsp} \le 4^{d} \, \Vert f \Vert_{\Wsp} \ \ \text{and} \ \ \Vert E_{\omega} f \Vert_{\Wsp} = \Vert f \Vert_{\Wsp} \ \ \text{for all} \ \ x,\omega\in\Rd, \ f\in\WRd. \]
Moreover, the dilation by an invertible $d\times d$ matrix $A$, $\alpha_{A}f(t) = \vert \det(A) \vert^{1/2} f(At)$ is a continuous operator on $\WRd$ for each such $A$. 
\end{lemma}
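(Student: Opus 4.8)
The plan is to handle the three operators separately. Modulation is immediate: since $|E_\omega f(t)| = |f(t)|$ pointwise and each $\psi_n \ge 0$, one has $\|E_\omega f\cdot\psi_n\|_\infty = \|f\cdot\psi_n\|_\infty$ for every $n\in\Zdst$, and summing the series \eqref{wienerdef03} gives $\|E_\omega f\|_\Wsp = \|f\|_\Wsp$. For translation I would start from the fact that $(T_x\psi_m)_{m\in\Zdst}$ is again a partition of unity, $\sum_{m\in\Zdst} T_x\psi_m \equiv 1$. Inserting this identity and using $T_x(f\psi_m) = (T_xf)(T_x\psi_m)$ yields the pointwise decomposition $(T_xf)\,\psi_n = \sum_{m\in\Zdst}\psi_n\,T_x(f\psi_m)$. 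Since $\|\psi_n\|_\infty = 1$ and $\|\cdot\|_\infty$ is translation invariant, each summand obeys $\|\psi_n\,T_x(f\psi_m)\|_\infty \le \|f\psi_m\|_\infty$, and it vanishes unless $\psi_n\cdot T_x\psi_m\not\equiv 0$.

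Summing over $n$ and interchanging the order of summation then gives
\[ \|T_xf\|_\Wsp \le \sum_{m\in\Zdst} \|f\psi_m\|_\infty\cdot\#\{n\in\Zdst : \psi_n\cdot T_x\psi_m\not\equiv 0\}, \]
so the whole estimate reduces to a uniform bound on this overlap count. Here is the one point that needs care. The open support of $\psi_n$ is the unit cube centred at $\tfrac12 n$, and that of $T_x\psi_m$ is the unit cube centred at $x+\tfrac12 m$; because the tent $\wedge$ vanishes on the boundary of its support, the product is not identically zero precisely when these \emph{open} cubes meet, i.e.\ when $|\tfrac12 n^{(j)} - x^{(j)} - \tfrac12 m^{(j)}| < 1$ for every coordinate $j$. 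For fixed $m$ and $x$ this confines each $n^{(j)}$ to an open interval of length $4$, which contains at most $4$ integers; hence at most $4^d$ indices $n$ contribute, uniformly in $m$ and $x$, and $\|T_xf\|_\Wsp \le 4^d\|f\|_\Wsp$ follows. (Using the closed supports would only give $5^d$; the sharper constant is exactly the gain from the tents being zero on shared faces.)

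For the dilation I would reduce to the already-stated independence of the Wiener norm on the choice of BUPU. The substitution $s = At$ in the supremum gives $\|\alpha_A f\cdot\psi_n\|_\infty = |\det A|^{1/2}\,\|f\cdot(\psi_n\circ A^{-1})\|_\infty$, so that $\|\alpha_A f\|_\Wsp = |\det A|^{1/2}\sum_{n\in\Zdst}\|f\cdot(\psi_n\circ A^{-1})\|_\infty$. The family $(\psi_n\circ A^{-1})_{n\in\Zdst}$ is itself a BUPU: one has $\sum_n\psi_n(A^{-1}s)=1$, and $\psi_n\circ A^{-1} = T_{\frac12 An}(\wedge\circ A^{-1})$ exhibits it as the lattice translates, along $A(\tfrac12\Zdst)$, of the single continuous compactly supported generator $\wedge\circ A^{-1}$. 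The displayed sum is therefore the Wiener norm of $f$ taken with respect to this BUPU, and by the stated equivalence of the norms attached to different BUPUs it is bounded by $C_A\|f\|_\Wsp$ for some $C_A$ depending only on $A$. Multiplying by $|\det A|^{1/2}$ shows that $\alpha_A$ is bounded on $\WRd$.

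The main obstacle is the translation count in the second step: obtaining the sharp factor $4^d$ rather than $5^d$ depends on working with the open supports of the tents. The other two assertions are light — modulation is a one-line pointwise identity, and dilation is an application of the granted BUPU-independence once one notices that pushing the BUPU forward by $A$ again produces an admissible BUPU.
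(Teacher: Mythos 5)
Your proposal is correct and follows essentially the same route as the paper: the modulation identity is the same one-line pointwise observation, the translation bound is the same overlap-counting argument (you insert the translated BUPU into $f$ and count the indices $n$ meeting each $T_x\psi_m$, while the paper inserts the partition of unity into the translated tent and re-indexes, but both reduce to the same $4^d$ count), and the dilation claim is handled, as the paper indicates, by observing that $(\psi_n\circ A^{-1})_n$ is again a BUPU and invoking the stated independence of the $\Wsp$-norm from the choice of BUPU. Your remark that the sharp constant $4^d$ (rather than $5^d$) rests on the tents vanishing on the boundary of their supports is a point the paper leaves implicit, and is worth making explicit.
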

\begin{proof}
The relation for the modulation operator is trivial. For the translation operator we have to work a bit harder. First, observe that for any $t,x\in \Rd$ we have
\[ \wedge(t+x) = \wedge(t+x) \cdot 1 = \wedge(t+x) \cdot \sum_{k\in F} \wedge(t-\tfrac{k}{2}),\]
where $F$ is a finite subset of $\bZ^{d}$. In fact, it can be taken to have $4^{d}$ summands. It is helpful to make  a sketch of the situation in the $1$- and $2$-dimensional setting.
With this equality we achieve the desired result as follows,
\begin{align*}
    \Vert T_{x} f \Vert_{\Wsp} & = \sum_{n\in\bZ^{d}} \Vert T_{x} f \cdot \psi_{n} \Vert_{\infty} = \sum_{n\in\bZ^{d}} \sup_{t\in\Rd} \big\vert f(t) \, \wedge(t+x-\tfrac{n}{2}) \big\vert \\
    & = \sum_{n\in\bZ^{d}}\sup_{t\in\Rd} \big\vert f(t) \, \sum_{k\in F} \wedge(t+x-\tfrac{n}{2})\,  \wedge(t-\tfrac{n-k}{2}) \big\vert \\
    & = \sum_{n\in\bZ^{d}} \sup_{t\in\Rd} \big\vert f(t) \, \sum_{k\in F} \wedge(t+x-\tfrac{n+k}{2})\, \wedge(t-\tfrac{n}{2}) \big\vert \\
    & \le \#F \, \Vert \wedge \Vert_{\infty}  \, \sum_{n\in\bZ^{d}} \sup_{t\in\Rd} \big\vert f(t) \,  \, \wedge(t-\tfrac{n}{2}) \big\vert = 4^{d} \, \Vert f \Vert_{\Wsp}.
\end{align*}
The argument for the continuity of the dilation operator is equivalent to the fact that different BUPUs define equivalent norms on the Wiener algebra. We omit the proof.
\end{proof}

The reader may verify the following statement: 
\begin{lemma}\label{le:W-is-solid}
If $f$ is a function in $\WRd$ and $h\in\CbRd$ is such that $\vert h(t) \vert\le \vert f(t) \vert$ for all $t\in\Rd$, then $h\in\WRd$ and $\Vert h \Vert_{\Wsp} \le \Vert f \Vert_{\Wsp}$.
\end{lemma}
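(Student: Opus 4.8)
The plan is to derive the solidity of the Wiener norm $\|\ebbes\|_\Wsp$ directly from the solidity of the sup-norm, working block by block in the defining series \eqref{wienerdef03}. The whole argument rests on a single pointwise estimate applied to each localized piece $f \cdot \psi_n$.

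First I would fix $n \in \Zdst$ and observe that, pointwise on $\Rd$,
\[ |h(t) \cdot \psi_n(t)| = |h(t)| \, |\psi_n(t)| \le |f(t)| \, |\psi_n(t)| = |f(t) \cdot \psi_n(t)|, \]
using only the hypothesis $|h(t)| \le |f(t)|$; in particular the sign of $\psi_n$ plays no role, though in our case $\psi_n \ge 0$ since the tent function $\wedge$ is nonnegative. Taking the supremum over $t \in \Rd$ gives the per-block bound $\|h \cdot \psi_n\|_\infty \le \|f \cdot \psi_n\|_\infty$. Both sides are genuine sup-norms of compactly supported continuous functions, because $h, f \in \CbRd$ and each $\psi_n \in \CcRd$, so there is no issue of well-definedness.

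Summing over $n \in \Zdst$ and invoking the definition \eqref{wienerdef03} then yields
\[ \|h\|_\Wsp = \sum_{n \in \Zdst} \|h \cdot \psi_n\|_\infty \le \sum_{n \in \Zdst} \|f \cdot \psi_n\|_\infty = \|f\|_\Wsp < \infty, \]
where finiteness of the right-hand side is precisely the assumption $f \in \WRd$. This shows simultaneously that $\|h\|_\Wsp < \infty$ and that $\|h\|_\Wsp \le \|f\|_\Wsp$.

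I expect no genuine obstacle: the content of the statement is simply that $\|\ebbes\|_\Wsp$ is assembled from the sup-norm by an $\ell^1$-type summation, and both the sup-norm and a sum of nonnegative terms respect the pointwise modulus inequality $|h| \le |f|$. The only points that should not be overlooked are, first, to confirm membership in $\WRd$ (which needs $h \in \CbRd$, given in the hypothesis, in addition to the finiteness of the norm just established), and second, to note that the factor $\psi_n$ can be pulled outside the modulus on both sides identically, so that the comparison survives localization.
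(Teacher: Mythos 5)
Your proof is correct and is exactly the argument the paper intends (the lemma is left to the reader there): the Wiener norm is an $\ell^1$-sum of sup-norms of the localized pieces $f\cdot\psi_n$, and both the sup-norm and the summation are monotone under the pointwise modulus inequality. You also correctly flag that membership in $\WRd$ requires $h\in\CbRd$ in addition to finiteness of the norm, so nothing is missing.
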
 

From Lemma \ref{le:W-is-solid} it is easy to prove the following implications:
if $f$ belongs to the Wiener algebra, then so does its absolute value, $\vert f \vert$, its real and imaginary part $\Re(f)$ and $\Im(f)$, and in case $f$ is real valued, also its positive and negative part $f^{+}$ and $f^{-}$, 
\begin{align*}
& \vert f \vert : t\mapsto \vert f(t) \vert, \ \ \Re(f): t\mapsto \Re(f(t)), \ \ \Im(f):t\mapsto\Im(f(t)),\\
& f^{+}: t\mapsto \tfrac{1}{2} \big( \vert f(t) \vert + f(t)\big) \ \ \ \text{and} \ \ \ f^{-} : t \mapsto \tfrac{1}{2} \big(\vert f(t) \vert - f(t)\big), \ \ t\in\Rd.
\end{align*}

Let us turn to the obstacle that we encountered with the function space $\CORd$: not every $f\in \CORd$ can be embedded into $\MbRd$ and we could not define the  convolution between arbitary functions in $\CORd$. The function space $\WRd$ can be completely embedded into $\MbRd$. Essential in this embedding is the key property of a function in the Wiener algebra to be integrable.
The Riemann integral can be extended from $\CcRd$ to a linear and continuous functional on $\Wsp(\Rd)$. That is,
\begin{equation}
    \label{eq:riemann-on-Wsp}
    I : \Wsp(\Rd) \to \bC, \ I(f) = \int_{\Rd} f(t) \, dt, \ f\in \Wsp(\Rd),
\end{equation}
is a well-defined linear functional satisfying $I(f) = I(T_x f)$,
$x \in \Rd$. Actually,
\begin{equation} \label{eq:int-bounded-on-Wsp} \Big\vert \int_{\Rd} f(t) \, dt \Big\vert = \vert I(f) \vert \le I(\vert f \vert) \le \Vert f \Vert_{\Wsp} \ \ \text{for all} \ \ f\in \Wsp(\Rd). \end{equation}
\noindent\textit{Proof of \eqref{eq:int-bounded-on-Wsp}.} Indeed, if we use the specific BUPU in \eqref{eq:our-bupu}, then we find
\begin{align*}
    & \Big\vert \intRd f(t) \, dt \Big\vert = \Big\vert \intRd \sum_{n\in\bZ^{d}} f(t) \, \psi_{n}(t) \, dt \Big\vert \le \sum_{n\in\bZ^{d}} \intRd \big\vert f(t) \, \psi_{n}(t) \vert \, dt \\
    & = \sum_{n\in\bZ^{d}} \int_{n+\big[-\tfrac{1}{2},\tfrac{1}{2}\big]^{d}} \big\vert f(t) \, \psi_{n}(t) \vert \, dt \le \sum_{n\in\bZ^{d}} \Vert f \psi_{n} \Vert_{\infty} = \Vert f \Vert_{\Wsp}.
\end{align*}


For functions in the Wiener algebra we define the $\Lsp^{1}$-norm to be
\[ \Vert f \Vert_{1} : \WRd \to \RR_{0}^{+}, \ \Vert f \Vert_{1} = \intRd \vert f(t) \vert \, dt.\]

The Riemann integral allows to embed the  
Wiener algebra $\WRd$ into 
$\MbRd$: 
\begin{equation} \label{functomeas1} 
\mu_{k}(f) = \intRd f(t) \, k(t) \, dt, \ \ 
f \in \CORd, k \in  \Wsp(\Rd).
\end{equation} 
It is easy to show that 
$\|\mu\|_\Mbsp \leq \|k\|_\Wsp$ for all $k\in\Wsp(\Rd)$ (combine \eqref{eq:int-bounded-on-Wsp} and Lemma~\ref{le:1109a}) and that
the mapping  $k\mapsto \mu_{k}$ from $\WRd$ into $\MbRd$ is injective. 


With this embedding we define the convolution of two functions in the Wiener algebra: if $f,k\in\WRd$, then their convolution product is defined to be
\begin{equation} \label{eq:conv-of-Wiener} \big(k*f\big)(t) = \big(\mu_{k} *f\big)(t) = \intRd f(t-s) \, k(s) \, ds, \ t\in \Rd. \end{equation}

\begin{lemma}
The convolution defined in \eqref{eq:conv-of-Wiener} turns $\WRdN$ into
a commutative Banach algebra with respect to convolution. In fact, 
\begin{equation} 
\| k \ast f\|_\Wsp \leq  4^{d} \, \|k\|_\Wsp \, \|f\|_\Wsp \ \ \text{for all} \ \ k,f\in \WRd.
\end{equation} 
\end{lemma}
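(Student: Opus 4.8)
The plan is to derive the entire statement from the translation bound $\Vert T_s f \Vert_{\Wsp} \le 4^d \Vert f \Vert_{\Wsp}$ of Lemma~\ref{le:translation-wiener}, by reading the convolution $k\ast f$ in \eqref{eq:conv-of-Wiener} as a continuous superposition of translates $T_s f$ weighted by $k(s)$. First I would check that $k\ast f$ is a genuine element to which the Wiener norm applies. For each fixed $t$ the integrand satisfies $|f(t-s)\,k(s)| \le \Vert f\Vert_{\infty}\,|k(s)|$, and since $\Vert f\Vert_{\infty}\le\Vert f\Vert_{\Wsp}$ by Lemma~\ref{le:1109a} while $k$ is integrable with $\Vert k\Vert_{1}\le\Vert k\Vert_{\Wsp}$ by \eqref{eq:int-bounded-on-Wsp} (read for $|k|$, whose $\Wsp$-norm equals that of $k$ because $\psi_n\ge 0$), the defining integral converges absolutely. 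Continuity of $k\ast f$ follows from the uniform continuity of $f$ together with the integrability of $k$ by a routine dominated estimate, so $k\ast f\in\CbRd$ and the sum defining $\Vert k\ast f\Vert_{\Wsp}$ is meaningful; its finiteness (proved next) then places $k\ast f$ in $\WRd\subset\CORd$.

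The heart of the matter is the inequality
\[ \Vert k\ast f\Vert_{\Wsp} \le \intRd |k(s)|\,\Vert T_s f\Vert_{\Wsp}\,ds. \]
To obtain it I fix $n\in\bZ^{d}$ and localise: multiplying \eqref{eq:conv-of-Wiener} by $\psi_n(t)$ gives
\[ (k\ast f)(t)\,\psi_n(t) = \intRd k(s)\,[T_s f](t)\,\psi_n(t)\,ds, \]
so, estimating the integrand and taking the supremum over $t$,
\[ \Vert (k\ast f)\cdot\psi_n\Vert_{\infty} \le \intRd |k(s)|\,\Vert T_s f\cdot\psi_n\Vert_{\infty}\,ds. \]
Summing this over any finite index set $F\subset\bZ^{d}$, interchanging the finite sum with the integral (which is mere linearity), and using $\sum_{n}\Vert T_s f\cdot\psi_n\Vert_{\infty}=\Vert T_s f\Vert_{\Wsp}$, I get $\sum_{n\in F}\Vert (k\ast f)\psi_n\Vert_{\infty}\le \intRd |k(s)|\,\Vert T_s f\Vert_{\Wsp}\,ds$; taking the supremum over all finite $F$ yields the displayed inequality and sidesteps any monotone-convergence interchange of an infinite sum with an integral. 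Finally, inserting $\Vert T_s f\Vert_{\Wsp}\le 4^d\Vert f\Vert_{\Wsp}$ and $\intRd|k(s)|\,ds=\Vert k\Vert_{1}\le\Vert k\Vert_{\Wsp}$ produces $\Vert k\ast f\Vert_{\Wsp}\le 4^d\,\Vert k\Vert_{\Wsp}\,\Vert f\Vert_{\Wsp}$, exactly as claimed, and in particular shows $\WRd$ is closed under convolution.

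It then remains to record the algebraic properties. Commutativity $k\ast f=f\ast k$ follows pointwise from the substitution $u=t-s$ in \eqref{eq:conv-of-Wiener}, both sides belonging to $\WRd$ by the estimate just established; bilinearity and associativity are routine manipulations of the defining integral (associativity using the absolute convergence to justify changing the order of integration), and completeness of $\WRdN$ has already been noted. The factor $4^d$ means the Wiener norm is only submultiplicative up to a constant, but passing to the equivalent norm $4^d\Vert\,\cdot\,\Vert_{\Wsp}$ makes convolution genuinely submultiplicative and turns $\WRd$ into a commutative Banach algebra. I expect the only real subtlety to be the well-definedness step, namely confirming that $k\ast f$ is truly continuous (so that the $\Wsp$-norm is applicable) rather than the norm estimate itself; the estimate is forced once one recognises that the uniform-in-$s$ bound $\Vert T_s f\Vert_{\Wsp}\le 4^d\Vert f\Vert_{\Wsp}$ is available, and the finite-partial-sum device removes the need for any delicate sum-integral interchange within the Riemann-integral framework of this paper.
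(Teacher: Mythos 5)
Your proof is correct and follows essentially the same route as the paper's: localise $k\ast f$ with the BUPU $(\psi_n)$, pull $|k(s)|$ out to reduce the estimate to $\intRd |k(s)|\,\Vert T_s f\Vert_{\Wsp}\,ds$, and invoke the translation bound $\Vert T_s f\Vert_{\Wsp}\le 4^{d}\Vert f\Vert_{\Wsp}$ from Lemma~\ref{le:translation-wiener} together with $\Vert k\Vert_1\le\Vert k\Vert_{\Wsp}$. Your finite-partial-sum device and the explicit verification of continuity and of the algebra axioms are slightly more careful than the paper's version (which justifies continuity via continuity of $t\mapsto T_t f$ in $\WRd$ and interchanges the infinite sum with the integral directly), but they do not constitute a different approach.
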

\begin{proof} 
That the function $k*f$ is continuous follows from the fact that for any $f\in \Wsp(\Rd)$ the mapping $t\mapsto T_{t}f$ is continuous from $\Rd$ to $\Wsp(\Rd)$. We can easily establish that the Wiener algebra norm is finite: for all $f,k\in\Wsp(\Rd)$
\begin{align*}
    \sum_{n\in\bZ^{d}} \Vert (k*f) \cdot \psi_{n} \Vert_{\infty} 
    & = \sum_{n\in\bZ^{d}} \, \sup_{t\in\Rd} \Big\vert \intRd f(t-s) \, k(s) \, ds \, \psi_{n}(t) \Big\vert \\
    & \le \intRd \vert k(s) \vert \, \sum_{n\in\bZ^{d}} \sup_{t\in\Rd} \big\vert f(t-s) \, \psi_{n}(t) \big\vert \, ds \\
    & = \intRd \vert k(s) \vert \, \Vert T_{s} f \Vert_{\Wsp} \, dt \le 4^{d} \, \Vert k \Vert_{\Wsp} \, \Vert g \Vert_{\Wsp} < \infty.
\end{align*}
\end{proof}

It is an easy application of Fubini's theorem that establishes the well-known inequality for the convolution in relation to the $\Lsp^{1}$-norm,
\begin{equation} \label{eq:L1-conv-algebra} \Vert k*f \Vert_{1} \le \Vert k \Vert_{1} \, \Vert f \Vert_{1} \ \ \text{for all} \ \ k,f\in\WRd.
\end{equation}

\begin{remark}
This observations opens up the possibility 
to {\it define} $\LiRdN$ within $\MbRdN$
as the closure of (the copy of) $\CcRd$ within $\MbRdN$, avoiding 
measure theory and Lebesgue integration completely. Even the Riemann-Lebesgue Theorem can be derived in this way. We do not
pursue this idea further.
\end{remark}

\begin{remark}
As every function in the Wiener algebra is integrable and uniformly bounded, it follows that $\WRd \subset \LiRd$ and $\WRd \subset \CbRd \subset \LinfRd$. This implies that $\WRd$ is a subspace of all the $\LpRd$-spaces for $p\in [1,\infty]$. Moreover, $\Vert f \Vert_{p} \le \Vert f \Vert_{\Wsp}$ for all $f\in \WRd$ and all $p\in[1,\infty]$. Observe that $\LiRd$, just as $\WRd$, is a Banach algebra with respect to convolution. Unlike $\WRd$ however, $\LiRd$ is \emph{not} a Banach algebra with respect to pointwise multiplication.
\end{remark}

\begin{lemma} \label{BUPptwest2}
A function $f \in \CbRd$ belongs to $\WRd$ if and only if $f^{\#}\in \WRd$ 
and 
\begin{equation} \label{maxnormWRd} 
\Vert f \Vert_{\Wsp} \le \Vert f^{\#} \Vert_{\Wsp} \le 8^{d} \, \Vert f \Vert_{\Wsp} \ \ \text{for all} \ \ f\in \WRd.
\end{equation}
\end{lemma}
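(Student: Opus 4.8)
The plan is to read the statement as two inequalities plus an equivalence, and to obtain everything except the right-hand bound directly from solidity. Taking $y=0$ in the defining maximum gives $|f(x)|\le f^{\#}(x)$ for every $x$, and since $f^{\#}\ge 0$ this is exactly the pointwise domination $|f|\le|f^{\#}|$. Hence, as soon as $f^{\#}\in\WRd$, Lemma~\ref{le:W-is-solid} yields $f\in\WRd$ together with the left estimate $\Vert f\Vert_{\Wsp}\le\Vert f^{\#}\Vert_{\Wsp}$. This already settles the ``if'' direction and the first inequality in \eqref{maxnormWRd}. It then remains to show that $f\in\WRd$ forces $f^{\#}\in\WRd$ with $\Vert f^{\#}\Vert_{\Wsp}\le 8^{d}\Vert f\Vert_{\Wsp}$. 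Before estimating, I would record that $f^{\#}$ is again bounded and continuous: boundedness follows from $\Vert f^{\#}\Vert_{\infty}\le\Vert f\Vert_{\infty}$, and continuity from the uniform continuity of $f\in\WRd\subset\CORd$, so that $f^{\#}\in\CbRd$ is a legitimate candidate for membership in $\WRd$.

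For the quantitative bound I would estimate each summand $\Vert f^{\#}\cdot\psi_{n}\Vert_{\infty}$ of $\Vert f^{\#}\Vert_{\Wsp}$. Since $0\le\psi_{n}\le 1$ and $\psi_{n}$ is supported in the unit cube $Q_{n}:=\tfrac{1}{2}n+[-\tfrac12,\tfrac12]^{d}$, one has $\Vert f^{\#}\cdot\psi_{n}\Vert_{\infty}\le\sup_{t\in Q_{n}}f^{\#}(t)$. The geometric point is that for $t\in Q_{n}$ the maximum defining $f^{\#}(t)$ only inspects values $f(t+y)$ with $|y|\le 1$, all lying in the enlarged cube $C_{n}:=\tfrac{1}{2}n+[-\tfrac32,\tfrac32]^{d}$ (half-width $\tfrac12+1$). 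Thus $\sup_{t\in Q_{n}}f^{\#}(t)\le\sup_{z\in C_{n}}|f(z)|$, and by compactness of $C_{n}$ and continuity of $f$ this supremum is attained at some $z_{n}\in C_{n}$. At that point I use the partition of unity $\sum_{m\in\bZ^{d}}\psi_{m}(z_{n})=1$ to write
\[
|f(z_{n})|=\sum_{m\in\bZ^{d}}|f(z_{n})|\,\psi_{m}(z_{n})\le\sum_{m\in M_{n}}\Vert f\cdot\psi_{m}\Vert_{\infty},
\]
where $M_{n}$ denotes the finite set of indices $m$ whose \emph{open} support meets $C_{n}$; every $m$ with $\psi_{m}(z_{n})>0$ lies in $M_{n}$, and each term $|f(z_{n})|\,\psi_{m}(z_{n})$ is bounded by $\Vert f\cdot\psi_{m}\Vert_{\infty}$.

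Summing over $n$ and interchanging the two sums then gives
\[
\Vert f^{\#}\Vert_{\Wsp}\le\sum_{n\in\bZ^{d}}\sum_{m\in M_{n}}\Vert f\cdot\psi_{m}\Vert_{\infty}=\sum_{m\in\bZ^{d}}\Vert f\cdot\psi_{m}\Vert_{\infty}\,\#\{n:m\in M_{n}\},
\]
so the whole estimate reduces to bounding the overlap multiplicity $\#\{n:m\in M_{n}\}$ by a fixed dimensional constant. Here $m\in M_{n}$ means the open cube $\tfrac12 m+(-\tfrac12,\tfrac12)^{d}$ meets $C_{n}=\tfrac12 n+[-\tfrac32,\tfrac32]^{d}$, which forces $|n^{(j)}-m^{(j)}|\le 3$ in each coordinate, i.e.\ at most $7$ choices per coordinate and $7^{d}\le 8^{d}$ in all. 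This yields $\Vert f^{\#}\Vert_{\Wsp}\le 8^{d}\Vert f\Vert_{\Wsp}$, finishing the ``only if'' direction and the second inequality.

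I expect the only genuinely delicate point to be this last combinatorial bookkeeping. One must work with the \emph{open} supports (where $\psi_{m}$ actually contributes) rather than the closed ones: the closed supports would allow $|n^{(j)}-m^{(j)}|\le 4$, hence $9$ choices per coordinate, overshooting the claimed constant. Everything else is either solidity (Lemma~\ref{le:W-is-solid}) or a routine support computation; in particular the value $8^{d}=2^{d}4^{d}$ is not tight here, the direct count giving the slightly sharper $7^{d}$.
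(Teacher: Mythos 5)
Your proof is correct and follows essentially the same route as the paper: the lower bound via the pointwise domination $|f|\le f^{\#}$ together with solidity (Lemma~\ref{le:W-is-solid}), and the upper bound via the finite-overlap counting for the BUPU, which is precisely the ``method of Lemma~\ref{le:translation-wiener}'' that the paper invokes without spelling out. Your explicit bookkeeping (enlarging to the cubes $C_{n}$, evaluating the partition of unity at a maximizing point, and counting $7$ admissible index shifts per coordinate using the open supports) actually yields the slightly sharper constant $7^{d}\le 8^{d}$.
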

\begin{proof}
The upper inequality follows by applying the same method as in the proof of Lemma \ref{le:translation-wiener} where we show that the translation operator is bounded on $\WRd$. As $|f(t)| \le f^{\#}(t)$ for all $t\in \Rd$ the lower inequality follows by Lemma \ref{le:W-is-solid}. 
\end{proof}

\begin{lemma} \label{useosc022}
If  $f \in \WRd$, then $\oscd(f) \in \WRd$
and $\lim_{\delta \to 0} \|\oscd(f)\|_\WRd = 0$.
\end{lemma}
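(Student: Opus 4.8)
The plan is to split the statement into its two assertions: membership $\oscd(f)\in\WRd$, and the norm convergence $\|\oscd(f)\|_\Wsp\to 0$ as $\delta\to 0$. For the membership I would first restrict to $0<\delta\le 1$ (which suffices, since we only let $\delta\to 0$) and invoke the pointwise bound $\oscd(f)\le 2f^{\#}$ from Lemma~\ref{elemest03}(i). Since $f\in\WRd$, Lemma~\ref{BUPptwest2} guarantees $f^{\#}\in\WRd$, so the nonnegative function $2f^{\#}$ lies in $\WRd$ as well. Because $0\le\oscd(f)\le 2f^{\#}$ pointwise, the solidity of the Wiener algebra (Lemma~\ref{le:W-is-solid}) then yields $\oscd(f)\in\WRd$ together with the $\delta$-uniform bound $\|\oscd(f)\|_\Wsp\le 2\|f^{\#}\|_\Wsp\le 2\cdot 8^{d}\|f\|_\Wsp$, valid for all $0<\delta\le 1$.

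For the convergence I would run a discrete dominated-convergence argument directly on the defining series $\|\oscd(f)\|_\Wsp=\sum_{n\in\Zdst}\|\oscd(f)\cdot\psi_n\|_\infty$. The same bound $\oscd(f)\le 2f^{\#}$ supplies a $\delta$-independent summable majorant $\|\oscd(f)\cdot\psi_n\|_\infty\le 2\|f^{\#}\cdot\psi_n\|_\infty=:M_n$, with $\sum_n M_n=2\|f^{\#}\|_\Wsp<\infty$. Given $\varepsilon>0$ I would pick a finite set $F\subset\Zdst$ with $\sum_{n\notin F}M_n<\varepsilon/2$, which controls the tail of the series uniformly in $\delta$. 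For the finite head over $F$ I would use that $f\in\WRd\subset\CORd$ is uniformly continuous, so by Lemma~\ref{useosc02} we have $\|\oscd(f)\|_\infty\to 0$ as $\delta\to 0$; since $\|\psi_n\|_\infty=1$, each term obeys $\|\oscd(f)\cdot\psi_n\|_\infty\le\|\oscd(f)\|_\infty$, whence $\sum_{n\in F}\|\oscd(f)\cdot\psi_n\|_\infty\le\#F\cdot\|\oscd(f)\|_\infty\to 0$. Choosing $\delta$ small enough to push the head below $\varepsilon/2$ then gives $\|\oscd(f)\|_\Wsp<\varepsilon$.

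The main obstacle is the interchange of the limit $\delta\to 0$ with the infinite summation over $n$: each individual term, and each finite partial sum, tends to zero, but one must rule out mass escaping to infinity as $n$ ranges over all of $\Zdst$. The device that resolves this is precisely the uniform (in $\delta\le 1$) summable majorant $2f^{\#}\in\WRd$ furnished by Lemma~\ref{BUPptwest2}, which confines the tail into a piece that is small independently of $\delta$; the finite head is then dispatched purely by the uniform continuity of $f$ via Lemma~\ref{useosc02}. I expect no further difficulty, since all the required estimates are already assembled in Lemmas~\ref{elemest03}, \ref{useosc02}, \ref{BUPptwest2}, and \ref{le:W-is-solid}.
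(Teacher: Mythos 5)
Your proof of the first assertion ($\oscd(f)\in\WRd$ with the uniform bound for $\delta\le 1$) is exactly the paper's argument: the pointwise domination $\oscd(f)\le 2f^{\#}$, the fact that $f^{\#}\in\WRd$, and the solidity of $\WRd$. The paper leaves the second assertion, $\|\oscd(f)\|_{\Wsp}\to 0$, as an exercise, and your discrete dominated-convergence argument fills it in correctly: the $\delta$-independent summable majorant $2\|f^{\#}\cdot\psi_n\|_{\infty}$ controls the tail uniformly, and the finite head is killed by the uniform continuity of $f\in\CORd$ via $\|\oscd(f)\|_{\infty}\to 0$. No gaps.
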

\begin{proof}
By Lemma \ref{elemest03} we have the inequality $\oscd(f) \le 2 f^{\#}$. In Lemma \ref{BUPptwest2} we established that $f\in\WRd$ implies that also $f^{\#}\in\WRd$. It follows from Lemma \ref{le:W-is-solid} that $\oscd(f)\in\WRd$. We leave the second statement as an exercise for the reader.

\end{proof}

$\WRd \subset \CORd$ implies that existence of the usual 
convolution, given by  
\begin{equation} \label{MconvWRd} 
 \big(\mu * f\big)(x) = \mu(T_{x}[f^{\checkmark}]), 
 \quad \mu\in\MbRd, f\in \WRd. 
 \end{equation}
Clearly $\mu \ast f \in \CORd$. For the claim
$\MbRd*\WRd \subset \WRd$ 
 we need a lemma: 
\begin{lemma}  \label{le:1209a}
For every compact set $K$ there exists a constant $c_K > 0$ such that for every function $f \in \CcRd$ with 
$\supp(f) \subseteq K+x$, $x\in \Rd$ one has:
\begin{equation} \label{Westsup1}
\|f\|_\Wsp \leq c_K \, \|f\|_\infty.
\end{equation}
\end{lemma}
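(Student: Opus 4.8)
The plan is to exploit the fact that the Wiener-algebra norm is a sum of local sup-norms against the BUPU elements $\psi_n$, together with the observation that a function supported in a translate $K+x$ of a fixed compact set can interact with only a \emph{uniformly bounded} number of these elements. First I would recall that
$$\|f\|_\Wsp = \sum_{n\in\Zdst} \|f\cdot\psi_n\|_\infty,$$
and note that the summand indexed by $n$ vanishes unless $\supp(\psi_n)\cap(K+x)\neq\emptyset$. Since $\supp(\psi_n) = \tfrac12 n + [-\tfrac12,\tfrac12]^d$ for the BUPU in \eqref{eq:our-bupu}, this intersection condition is equivalent to $\tfrac12 n \in (K+x)+[-\tfrac12,\tfrac12]^d$, i.e.\ to $n \in 2K + 2x + [-1,1]^d$.

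The key step is the counting estimate. The set $2K + 2x + [-1,1]^d$ is a \emph{translate} (by $2x$) of the fixed bounded set $2K + [-1,1]^d$. Enclosing the latter in a cube $[-M,M]^d$, which is possible since $K$ is compact, every such translate lies in a product of $d$ intervals each of length $2M$, and therefore contains at most $N_K := (2M+1)^d$ lattice points. The point is that this bound depends only on $K$ and crucially \emph{not} on $x$. Hence at most $N_K$ summands in the Wiener norm of $f$ are nonzero.

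Finally I would bound each surviving summand trivially. Since $\wedge(0)=1$ and each factor of $\wedge$ is at most $1$, we have $\|\psi_n\|_\infty = \|\wedge\|_\infty = 1$, so that $\|f\cdot\psi_n\|_\infty \le \|f\|_\infty\,\|\psi_n\|_\infty = \|f\|_\infty$. Combining the count with this pointwise bound yields
$$\|f\|_\Wsp = \sum_{n:\,\supp(\psi_n)\cap(K+x)\neq\emptyset} \|f\cdot\psi_n\|_\infty \le N_K\,\|f\|_\infty,$$
so we may take $c_K = N_K$. The only genuine obstacle is the uniform-in-$x$ lattice-point count; everything else is routine. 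If one prefers to argue with a general regular BUPU rather than the specific tent BUPU, the same reasoning applies verbatim, with the side length of the enclosing cube enlarged by $\operatorname{diam}(\supp\psi_0)$ and the trivial factor $1$ replaced by $\sup_k \|\psi_k\|_\infty$.
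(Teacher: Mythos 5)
Your proof is correct and follows essentially the same route as the paper's: both reduce the Wiener norm to the finitely many terms with $\supp(\psi_n)\cap(K+x)\neq\emptyset$, invoke a bound on their number that is uniform in $x$, and estimate each surviving term by $\|\psi_n\|_\infty\,\|f\|_\infty$. The only difference is that you make the lattice-point count explicit for the tent BUPU, whereas the paper simply asserts the uniform finiteness from the BUPU axioms.
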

\begin{proof}
From the definition of a BUPU it follows that for any given compact set $K$ there is a uniform bounded finite number of functions such that for all $x\in \Rd$  $\supp\,\psi_{n}\cap K \ne \emptyset$. 
Therefore, for any $f\in\Wsp(\Rd)$ with $\supp \, f \subset K +x $
\begin{align*}
    \Vert f \Vert_{\Wsp} & = \sum_{n\in\bZ^{d}} \,\, \Vert f \cdot \psi_{n} \Vert_{\infty} = \sum_{n\in\bZ^{d}} \,\,( \sup_{t\in K+x} \vert f(t) \cdot \psi_{n}(t) \vert) \\
    & \le \Big( \sum_{n \in F_x} \Vert \psi_{n} \Vert_{\infty} \Big)  \,  \Vert f \Vert_{\infty} = c_{K} \, \Vert f \Vert_{\infty},
\end{align*}
where $c_{K}$ is this uniform bound in the number of elements in $F_x$.
\end{proof}


\begin{proposition}
We have $\MbRd \ast \WRd \subset \WRd $ and moreover 
\begin{equation} 
\| \mu \ast f\|_\Wsp \leq  c \, \|\mu\|_{\Mbsp} \, \|f\|_{\Wsp} \ \ \text{for all} \ \ \mu\in\MbRd, \ f\in \WRd.
\end{equation} 
\end{proposition}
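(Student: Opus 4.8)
The plan is to reduce everything to two facts already available: the measure splits as an absolutely convergent series $\mu = \sum_{m\in\bZ^{d}}\mu\cdot\psi_{m}$ with $\sum_{m}\Vert\mu\cdot\psi_{m}\Vert_{\Mbsp}=\Vert\mu\Vert_{\Mbsp}$ (Theorem \ref{th:Mb-by-finite-measure}), and the local estimate of Lemma \ref{le:1209a}, by which a compactly supported function with support in a translate of a fixed cube has Wiener norm controlled by its sup-norm. First I would set $\mu_{m}=\mu\cdot\psi_{m}$ and localise $f$ by writing $f=\sum_{j\in\bZ^{d}}f\cdot\psi_{j}$ (a pointwise, locally finite sum, since the BUPU is locally finite). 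The elementary observation driving the proof is that $\mu_{m}\ast(f\cdot\psi_{j})$ is compactly supported: because $\mu_{m}$ is concentrated on $\supp\psi_{m}$ and $f\cdot\psi_{j}$ on $\supp\psi_{j}$, one checks directly from $(\nu\ast g)(x)=\nu(T_{x}[g^{\checkmark}])$ that $\supp\big(\mu_{m}\ast(f\cdot\psi_{j})\big)\subseteq \overline{\supp\psi_{m}+\supp\psi_{j}}$, which is a translate of the fixed compact cube $K=[-1,1]^{d}$.

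Next I would estimate each such piece. Since $\mu_{m}\ast(f\cdot\psi_{j})$ is supported in a translate of $K$, Lemma \ref{le:1209a} gives $\Vert\mu_{m}\ast(f\cdot\psi_{j})\Vert_{\Wsp}\le c_{K}\,\Vert\mu_{m}\ast(f\cdot\psi_{j})\Vert_{\infty}$, and the sup-norm bound of Theorem \ref{th:convolution-CO} (applied to $\mu_{m}$ and $f\cdot\psi_{j}\in\CcRd$) yields $\Vert\mu_{m}\ast(f\cdot\psi_{j})\Vert_{\infty}\le\Vert\mu_{m}\Vert_{\Mbsp}\,\Vert f\cdot\psi_{j}\Vert_{\infty}$. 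Summing first over $j$ and using $\sum_{j}\Vert f\cdot\psi_{j}\Vert_{\infty}=\Vert f\Vert_{\Wsp}$, then over $m$ and using $\sum_{m}\Vert\mu_{m}\Vert_{\Mbsp}=\Vert\mu\Vert_{\Mbsp}$, I obtain
\[
\sum_{m,j}\Vert\mu_{m}\ast(f\cdot\psi_{j})\Vert_{\Wsp}\le c_{K}\,\Vert\mu\Vert_{\Mbsp}\,\Vert f\Vert_{\Wsp}.
\]
Thus the doubly-indexed family $\{\mu_{m}\ast(f\cdot\psi_{j})\}$ is absolutely summable in the Banach space $\WRdN$, hence summable, with the displayed norm bound; this produces the desired constant $c=c_{K}$.

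It remains to identify the sum of this series with $\mu\ast f$. Here I would argue that the series converges pointwise to $\mu\ast f$: expanding $(\mu\ast f)(x)=\mu(T_{x}[f^{\checkmark}])$ and using $\sum_{m}\psi_{m}\equiv1$ together with the continuity of $\mu$ on $\CORd$ gives $\mu\ast f=\sum_{m}\mu_{m}\ast f$ pointwise, and for fixed $m$ the continuity of convolution into $\CORd$ (again Theorem \ref{th:convolution-CO}) gives $\mu_{m}\ast f=\sum_{j}\mu_{m}\ast(f\cdot\psi_{j})$. Since $\WRd$ embeds continuously into $\CORd$ (with $\Vert\cdot\Vert_{\infty}\le\Vert\cdot\Vert_{\Wsp}$, Lemma \ref{le:1109a}), the Wiener-norm limit of the absolutely convergent series must coincide with its uniform limit, namely $\mu\ast f$; hence $\mu\ast f\in\WRd$ and the claimed estimate holds.

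The routine part is the support and summation bookkeeping, which mirrors the proofs of Lemma \ref{le:translation-wiener} and Lemma \ref{le:1209a}. The step needing the most care — the main obstacle — is this final identification: one must ensure that the object defined pointwise by \eqref{MconvWRd} really is the $\WRd$-limit of the rearranged double series, i.e.\ that substituting $\mu=\sum_{m}\mu_{m}$ and $f=\sum_{j}f\cdot\psi_{j}$ inside the convolution is legitimate and that the two notions of limit (uniform and Wiener) agree. Completeness of $\WRdN$ together with the continuous embedding into $\CORdN$ is precisely what reconciles them.
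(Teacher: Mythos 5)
Your proof is correct and follows essentially the same route as the paper: decompose both $\mu$ and $f$ via the BUPU, note that each piece $\mu\psi_{m}\ast(f\psi_{j})$ is supported in a translate of a fixed compact set, apply Lemma \ref{le:1209a} together with the sup-norm bound of Theorem \ref{th:convolution-CO}, and sum using Theorem \ref{th:Mb-by-finite-measure} and the definition of $\Vert\cdot\Vert_{\Wsp}$. Your final paragraph identifying the $\Wsp$-limit of the double series with $\mu\ast f$ via the continuous embedding into $\CORd$ is a point the paper passes over silently, and it is a worthwhile addition.
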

\begin{proof}
We use the fact that both $\mu\in\MbRd$ and $f\in\WRd$ have an absolutely convergent series representation if one applied a BUPU to each of them, i.e., $\mu=\sum_{n\in\bZ^{d}} \mu \cdot \psi_{n}$ with $\Vert \mu \Vert_{M} = \sum_{n\in\bZ^{d}} \Vert \mu \cdot \psi_{n} \Vert_{M}$ and $f=\sum_{k\in\bZ^{d}} f \cdot \psi_{k}$ with $\Vert f \Vert_{\Wsp} = \sum_{k\in\bZ^{d}} \Vert f \cdot \psi_{k} \Vert_{\infty}$.
Observe that for each $k,n\in\bZ^{d}$ the function
\[ x\mapsto \big(\mu \psi_{n} * f\psi_{k}\big)(x) = \mu\psi_{n}([T_{x}f\psi_{k}]^{\checkmark}) \]
is continuous and compactly supported, hence an element in $\Wsp(\Rd)$. Furthermore, due to the uniform size of the support of the BUPU $(\psi_{n})$ the functions $\mu \psi_{n} * f\psi_{k}$, $k,n\in\bZ^{d}$ all have support within $K+x$, where $K$ is a fixed compact set and $x$ depends on $k$ and $n$. With the BUPU as in \eqref{eq:our-bupu} $K=[0,1]^{d}$. By Lemma \ref{le:1209a}
we have 
\[ \Vert \mu \psi_{n} * f\psi_{k} \Vert_{\Wsp} \le c_{K} \, \Vert \mu \psi_{n} * f\psi_{k} \Vert_{\infty} \le c_{K} \Vert \mu \psi_{n} \Vert_{M} \, \Vert f \psi_{k} \Vert_{\infty}. \]
Combining these inequalities allows us to deduce the desired estimate:
\begin{align*}
    \Vert \mu * f \Vert_{\Wsp} & = \Big\Vert \big( \sum_{n\in\bZ^{d}} \mu \cdot \psi_{n} \big) * \big( \sum_{k\in\bZ^{d}} f \cdot \psi_{k} \big) \Big\Vert_{\Wsp} \\
    & \le \sum_{k,n\in\bZ^{d}} \Vert (\mu \cdot \psi_{n} ) * ( f \cdot \psi_{k} ) \Vert_{\Wsp} \\
    & \le c_{K} \, \sum_{k,n\in\bZ^{d}} \Vert \mu\psi_{n} \Vert_{M} \, \Vert f \psi_{k} \Vert_{\infty} 
     \\     & 
    = c_{K} \, \Vert \mu \Vert_{M} \, \Vert f \Vert_{\Wsp} < \infty.
\end{align*} 
\end{proof}
For later use we note the following result.
\begin{lemma} \label{le:Wiener-restrict}
For any $m\in\bN$ such that $0<m<d$, the operator
    \[ \mathcal{R}_{m} : \WRd \to \Wsp(\RR^{m}) , \ \mathcal{R}_{m}f(x^{(1)},\ldots,x^{(m)}) = f(x^{(1)},\ldots,x^{(m)},0,\ldots, 0), \ x^{(i)}\in\bR,\]
    is continuous. In fact, $\Vert \mathcal{R}_{m} f \Vert_{\Wsp(\RR^{m})} \le \Vert f \Vert_{\Wsp(\RR^{d})}$ for all $f\in \WRd$.
    \end{lemma}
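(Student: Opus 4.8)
The plan is to exploit the tensor-product structure of the tent function $\wedge$ and of the BUPU $(\psi_n)_{n\in\bZ^d}$, together with the fact that the Wiener norm is an $\ell^1$-sum of local sup-norms indexed by the lattice. First I would fix notation for the target space: writing a point of $\RR^m$ as $x=(x^{(1)},\ldots,x^{(m)})$, let $(\psi_k')_{k\in\bZ^m}$ be the analogous BUPU on $\RR^m$, namely $\psi_k'(x)=\prod_{j=1}^m\max(1-2|x^{(j)}-\tfrac12 k^{(j)}|,0)$, so that by definition $\|g\|_{\Wsp(\RR^m)}=\sum_{k\in\bZ^m}\|g\cdot\psi_k'\|_\infty$ for $g\in\Wsp(\RR^m)$.

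The key observation is the product structure of the full-dimensional BUPU. For a multi-index split as $n=(k,l)$ with $k\in\bZ^m$ and $l\in\bZ^{d-m}$ one has
\[ \psi_{(k,l)}(t)=\psi_k'(t^{(1)},\ldots,t^{(m)})\cdot\prod_{i=1}^{d-m}\max\big(1-2|t^{(m+i)}-\tfrac12 l^{(i)}|,0\big). \]
Evaluating the last $d-m$ factors at $t^{(m+1)}=\cdots=t^{(d)}=0$, each becomes $\max(1-|l^{(i)}|,0)$, which equals $1$ when $l^{(i)}=0$ and vanishes as soon as $l^{(i)}\neq 0$. Hence the restriction of $\psi_{(k,l)}$ to the hyperplane $\{t^{(m+1)}=\cdots=t^{(d)}=0\}$ is identically zero unless $l=0$, and $\mathcal{R}_m\psi_{(k,0)}=\psi_k'$.

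With $g=\mathcal{R}_m f$, I would then estimate each summand of the target norm by the corresponding summand in dimension $d$: for each $k\in\bZ^m$,
\[ \|g\cdot\psi_k'\|_\infty=\sup_{x\in\RR^m}|f(x,0)|\,\psi_{(k,0)}(x,0)\le\sup_{t\in\Rd}|f(t)|\,\psi_{(k,0)}(t)=\|f\cdot\psi_{(k,0)}\|_\infty, \]
since restricting the supremum to a subset only decreases it. Summing over $k$ and recognizing $\{(k,0):k\in\bZ^m\}$ as a subset of $\bZ^d$ with all terms nonnegative yields
\[ \|g\|_{\Wsp(\RR^m)}=\sum_{k\in\bZ^m}\|g\cdot\psi_k'\|_\infty\le\sum_{k\in\bZ^m}\|f\cdot\psi_{(k,0)}\|_\infty\le\sum_{n\in\bZ^d}\|f\cdot\psi_n\|_\infty=\|f\|_{\Wsp(\RR^d)}. \]
In particular the right-hand side is finite, so $\mathcal{R}_m$ maps into $\Wsp(\RR^m)$ and is bounded with norm at most $1$; continuity of $g$ is immediate from that of $f$.

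I do not expect a genuine obstacle: the only step requiring care is the verification that the ``off-hyperplane'' BUPU elements (those with $l\neq 0$) restrict to zero, which hinges on the half-integer spacing making $\max(1-|l^{(i)}|,0)=0$ for $l^{(i)}\neq 0$. Everything else is the elementary fact that an $\ell^1$-sum over a sublattice is dominated by the full sum, which is precisely what produces the clean constant $1$. Had one insisted on a general BUPU rather than the specific tent construction, restricting the $d$-dimensional BUPU to the hyperplane would still furnish a (possibly overlapping) BUPU on $\RR^m$, and the same sub-sum argument would go through with a dimensional constant in place of $1$.
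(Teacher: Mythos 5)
Your proof is correct and follows essentially the same route as the paper: identify $\psi'_k$ on $\RR^m$ with the restriction of $\psi_{(k,0)}$ to the hyperplane via the tensor structure of the tent function, bound each local sup-norm by the corresponding $d$-dimensional one, and dominate the sub-sum over the sublattice $\{(k,0)\}$ by the full sum over $\bZ^d$. You merely spell out the identification $\mathcal{R}_m\psi_{(k,0)}=\psi'_k$ and the vanishing of the off-hyperplane BUPU elements, which the paper leaves implicit.
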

\begin{proof}
The desired inequality is achieved as follows:
\begin{align*}
    \Vert \mathcal{R}_{m}f \Vert_{\Wsp(\RR^{m})} & = \sum_{n\in\bZ^{m}} \Vert \mathcal{R}_{m} f \cdot \psi_{n}^{(m)} \Vert_{\infty} \\
    & = \sum_{n\in\bZ^{m}} \sup_{t\in \RR^{m}} \vert f(t,0)\cdot \psi_{n}^{(m)}(t)  \vert \qquad \quad (0\in \RR^{d-m})\\
    & \le \sum_{n\in\bZ^{d}} \sup_{t\in \RR^{d}} \vert f(t)\cdot \psi_{n}^{(d)}(t)  \vert = \Vert f \Vert_{\Wsp(\RR^{d})}.
\end{align*}
\end{proof}

\section{The Fourier Transform}
\label{sec:Fourier-transform-on-WFW}

As functions in the Wiener algebra are integrable (in the sense of {\it Riemann!}), we can use $\WRd$ as the domain of the Fourier transform. 

\begin{definition}
For   $f\in \WRd$ we define the \emph{Fourier transform},  
\begin{equation} \label{FourWDef1}
\FT f (s) = \hat{f}(s) = \int_{\Rn} f(t) \, e^{-2\pi i s \cdot t} \, dt , \ \ s\in\Rd. 
\end{equation}
\end{definition}

We mention the following classical result.

\begin{lemma} [Riemann-Lebesgue Lemma]
The Fourier transform is a non-expansive and injective linear  operator from $\WRdN$ into $\CORdN$, i.e.\ 
\begin{equation} \label{FourInfEst}
\|\hatf\|_\infty \leq \Vert f \Vert_{1} \leq \|f\|_\Wsp \ \ \text{for all} \ \ f\in\WRd.
\end{equation}
\end{lemma}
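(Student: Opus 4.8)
The statement bundles four claims: that $\hat f$ is well defined for $f\in\WRd$, the displayed norm chain, that $\FT$ maps into $\CORd$, and injectivity. The plan is to dispatch the estimate immediately, establish membership in $\CORd$ by splitting it into continuity and decay, and leave injectivity for last, as it is the only part requiring real work. Well-definedness of the Riemann integral on $\WRd$ is already granted, and for each fixed $s$ the triangle inequality for integrals together with $\vert e^{-2\pi i s\cdot t}\vert=1$ gives $\vert\hat f(s)\vert\le\intRd\vert f(t)\vert\,dt=\Vert f\Vert_1$; combining this with \eqref{eq:int-bounded-on-Wsp}, which yields $\Vert f\Vert_1\le\Vert f\Vert_\Wsp$, produces the whole chain and shows that $\FT$ is a linear, non-expansive map into $\CbRd$.

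To place $\hat f$ in $\CORd$ I would first argue continuity by a density reduction. Since $\CcRd$ is dense in $\WRd$, choose $f_k\in\CcRd$ with $\Vert f_k-f\Vert_\Wsp\to0$; non-expansiveness gives $\Vert\hat{f_k}-\hat f\Vert_\infty\le\Vert f_k-f\Vert_\Wsp\to0$. Each $\hat{f_k}$ is Lipschitz: if $\supp f_k\subseteq B_R(0)$, then the elementary bound $\vert e^{-ia}-e^{-ib}\vert\le\vert a-b\vert$ yields $\vert\hat{f_k}(s)-\hat{f_k}(s')\vert\le 2\pi R\,\vert s-s'\vert\,\Vert f_k\Vert_1$. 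A uniform limit of continuous functions is continuous, so $\hat f\in\CbRd$. For the decay I would use the shift trick, which works for \emph{every} $f\in\WRd$ with no density argument: for $s\ne0$ set $u=-s/(2\vert s\vert^2)$, so that $e^{-2\pi i s\cdot u}=-1$ and $\vert u\vert=1/(2\vert s\vert)$. A change of variables gives $\widehat{T_u f}(s)=e^{-2\pi i s\cdot u}\hat f(s)=-\hat f(s)$, whence $2\hat f(s)=\widehat{(f-T_u f)}(s)$ and $\vert\hat f(s)\vert\le\tfrac12\Vert f-T_u f\Vert_1$. Since $\vert f(t)-f(t-u)\vert\le\osc_{\vert u\vert}(f)(t)$ pointwise, the inequality $\Vert\cdot\Vert_1\le\Vert\cdot\Vert_\Wsp$ gives $\vert\hat f(s)\vert\le\tfrac12\Vert\osc_{\vert u\vert}(f)\Vert_\Wsp$, and this tends to $0$ as $\vert s\vert\to\infty$ (so $\vert u\vert\to0$) by Lemma \ref{useosc022}. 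Thus $\hat f$ is continuous and vanishes at infinity, i.e.\ $\hat f\in\CORd$.

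For injectivity, suppose $\hat f=0$. I would invoke the multiplication formula $\intRd\hat f(s)\,g(s)\,ds=\intRd f(t)\,\hat g(t)\,dt$, valid for $f,g\in\WRd$ by Fubini since the double integral converges absolutely ($f,g\in\WRd\subset\Lisp$). Taking the modulated, dilated Gaussian $g(s)=e^{2\pi i x\cdot s}g_0(\rho s)$ with $g_0(t)=e^{-\pi\vert t\vert^2}$, one uses $\hat g_0=g_0$ and the modulation/dilation rules to compute $\hat g=T_x\Strho g_0$, the $\Lisp$-normalized Gaussian bump of width $\rho$ centred at $x$. The left-hand side is zero, so $\intRd f(t)\,(T_x\Strho g_0)(t)\,dt=(f*\Strho g_0)(x)=0$ for every $\rho>0$; letting $\rho\to0$, the Gaussian acts as an approximate identity and, by continuity of $f$, the integral converges to $f(x)$. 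Hence $f(x)=0$ for all $x$, so $f=0$.

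I expect injectivity to be the main obstacle, and within it the two facts about the Gaussian: its Fourier invariance $\hat g_0=g_0$ and the convergence $f*\Strho g_0\to f$ uniformly for $f\in\CORd$. Both are classical and provable by Riemann integration alone — the first by a completing-the-square/differential-equation argument, the second by splitting the Gaussian mass near and far from $x$ and invoking uniform continuity (Lemma \ref{useosc02}) — but neither is formally recorded in the excerpt, so I would insert them as short preliminary lemmas or cite the earlier development. Everything else is routine bookkeeping with $\Vert\cdot\Vert_1\le\Vert\cdot\Vert_\Wsp$ and Lemma \ref{useosc022}.
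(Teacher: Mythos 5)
The paper does not prove this lemma at all; it is introduced with the words ``We mention the following classical result'' and left as an imported fact, so there is nothing to compare against line by line --- your argument is the only proof on the table, and it is correct. It is also, pleasingly, built entirely from the paper's own toolkit: the norm chain comes straight from \eqref{eq:int-bounded-on-Wsp}; continuity of $\hat f$ from the density of $\CcRd$ in $\WRdN$ plus the Lipschitz bound on compactly supported approximants; and the decay from the half-period shift $u=-s/(2\vert s\vert^{2})$ combined with $\Vert f - T_{u}f\Vert_{1}\le \Vert \oscd(f)\Vert_{\Wsp}\to 0$ for $\delta=\vert u\vert\to 0$ (Lemma \ref{useosc022}) --- a route that exploits the oscillation machinery the authors set up but never deploy for this purpose, and which avoids a second density argument. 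Two small points to tidy. First, your injectivity step invokes the fundamental identity \eqref{fund-Fourier0}, which in the paper appears \emph{after} this lemma and whose own proof begins by asserting that $\hat f$ and $\hat g$ are bounded and continuous; since you establish boundedness and continuity of $\hat f$ before touching injectivity, there is no circularity, but this ordering should be made explicit (or the multiplication formula proved in between). Second, as you note yourself, the Fourier invariance of $g_{0}$ and the approximate-identity property $f*\Strho g_{0}\to f$ are only asserted elsewhere in the paper (inside the proof of the inversion theorem), so recording them as preliminary lemmas, provable by Riemann integration, is the right call. With those bookkeeping remarks the proof is complete.
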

A cornerstone of our approach will be the following formula, which has been called   
\emph{fundamental identity for the Fourier transform} by H.~Reiter: 
\begin{theorem} 
\label{Four-fund0}
%
\begin{equation} \label{fund-Fourier0}
\intRd   f(t) \, \hat g(t) \, dt =  \intRd  \hat f (x) \, g(x) \, dx \ \ \text{for all} \ \ f,g \in \WRd. 
\end{equation}
Equally important is the  
convolution theorem for the Fourier transform
\begin{equation} \label{eq:conv-theorem} 
\widehat{f\ast g} = \hat{f} \cdot \hat{g} 
 \ \ \text{for all} \ \ f,g\in\WRd, 
\end{equation}
\end{theorem}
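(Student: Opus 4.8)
The plan is to prove both identities first on the dense subspace $\CcRd \subset \WRd$, where every integral that appears is a proper Riemann integral over a compact set and only the classical Fubini theorem for continuous functions on a rectangle is needed, and then to pass to general $f,g\in\WRd$ by a soft continuity argument.

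For the fundamental identity \eqref{fund-Fourier0}, I would take $f,g\in\CcRd$ with $\supp f \subseteq B_R(0)$ and $\supp g \subseteq B_S(0)$. Writing out $\hat g$ and restricting the integration to the compact box $[-R,R]^d\times[-S,S]^d$, on which the integrand is continuous, the elementary Fubini theorem gives
\begin{equation*}
\intRd f(t)\,\hat g(t)\,dt = \intRd \intRd f(t)\,g(x)\,e^{-2\pi i\, t\cdot x}\,dx\,dt = \intRd \hat f(x)\,g(x)\,dx .
\end{equation*}
To drop the compact-support hypothesis I would fix $g\in\CcRd$ and note, via the Riemann--Lebesgue Lemma \eqref{FourInfEst}, that both sides are bounded linear functionals of $f\in\WRd$: one has $\big|\intRd f\,\hat g\big|\le \|\hat g\|_\infty\,\|f\|_1 \le \|g\|_\Wsp\,\|f\|_\Wsp$ and $\big|\intRd \hat f\,g\big| \le \|\hat f\|_\infty\,\|g\|_1 \le \|f\|_\Wsp\,\|g\|_\Wsp$. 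Since they agree on the dense subspace $\CcRd$, they agree for all $f\in\WRd$. Fixing then $f\in\WRd$ and repeating the density argument in the variable $g$ yields \eqref{fund-Fourier0} on all of $\WRd$.

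For the convolution theorem \eqref{eq:conv-theorem} I would proceed identically. For $f,g\in\CcRd$ the convolution $f\ast g$ is again in $\CcRd$, and expanding the definition \eqref{eq:conv-of-Wiener}, applying Fubini on a compact box, and using the substitution $t\mapsto t-u$ in the inner integral gives
\begin{equation*}
\widehat{f\ast g}(s) = \intRd \Big(\intRd f(t-u)\,g(u)\,du\Big) e^{-2\pi i\, s\cdot t}\,dt = \intRd g(u)\,e^{-2\pi i\, s\cdot u}\,\hat f(s)\,du = \hat f(s)\,\hat g(s).
\end{equation*}
To extend to $\WRd$, I would observe that both $(f,g)\mapsto \widehat{f\ast g}$ and $(f,g)\mapsto \hat f\cdot\hat g$ are bounded bilinear maps from $\WRd\times\WRd$ into $\CORdN$: by \eqref{eq:L1-conv-algebra} and \eqref{FourInfEst} one has $\|\widehat{f\ast g}\|_\infty \le \|f\ast g\|_1 \le \|f\|_\Wsp\,\|g\|_\Wsp$, while $\|\hat f\cdot\hat g\|_\infty \le \|\hat f\|_\infty\,\|\hat g\|_\infty \le \|f\|_\Wsp\,\|g\|_\Wsp$. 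As they coincide on $\CcRd\times\CcRd$, the same one-variable-at-a-time density argument shows they coincide on $\WRd\times\WRd$.

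The main obstacle is the justification of the interchange of the order of integration in the Riemann-only setting, since the full Lebesgue Fubini--Tonelli theorem is deliberately unavailable here. Reducing to $\CcRd$ is exactly what removes this difficulty: there the integrand is continuous with compact support, so only Fubini for continuous functions on a compact rectangle is required, and the passage to $\WRd$ becomes a purely functional-analytic argument resting on density of $\CcRd$ and the estimate $\|f\|_1\le\|f\|_\Wsp$. A minor point to record is the absolute integrability $\intRd\intRd |f(t)|\,|g(x)|\,dx\,dt = \|f\|_1\,\|g\|_1<\infty$ (and its analogue with $f(t-u)$) for $f,g\in\WRd$, which guarantees that the relevant double integrals are well defined before any limit is taken.
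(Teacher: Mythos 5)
Your proposal is correct, but it is organized differently from the paper's proof. The paper applies Fubini's theorem directly to $f,g\in\WRd$: it first notes (via Lemma \ref{le:1109b}) that the products $f\cdot\hat g$ and $\hat f\cdot g$ lie in $\WRd$ and are therefore integrable, and then interchanges the two improper Riemann integrals over $\Rdst\times\Rdst$ in one stroke, asserting that the required Fubini theorem ``is easy to prove for Riemann integrals''; the convolution theorem is dispatched ``in a similar fashion'' via the exponential law. You instead prove both identities on $\CcRd$, where only the elementary Fubini theorem for continuous functions on a compact rectangle is needed, and then extend by density using the bounds $\|f\|_1\le\|f\|_\Wsp$, \eqref{FourInfEst} and \eqref{eq:L1-conv-algebra} to see that both sides of each identity are continuous in each variable separately on $\WRdN$. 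What your route buys is that the only measure-theory-free ingredient is the textbook interchange theorem on compact boxes, so the delicate point the paper leaves implicit (Fubini for absolutely convergent improper Riemann double integrals over all of $\Rtdst$) never arises; the price is an extra functional-analytic layer resting on the density of $\CcRd$ in $\WRdN$, which the paper does state. The paper's route is shorter and keeps the identity ``pointwise'' in spirit, at the cost of having to supply (or take on faith) the unbounded-domain Fubini statement. Your closing remark on the absolute integrability $\intRd\intRd|f(t)||g(x)|\,dx\,dt=\|f\|_1\|g\|_1<\infty$ is exactly the hypothesis one would need to make the paper's one-step interchange rigorous, so the two arguments are complementary rather than in conflict.
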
 

\noindent\textit{Proof of \eqref{fund-Fourier0} and \eqref{eq:conv-theorem}.} The Fourier transforms $\hat f$ and $\hat g$ are bounded and continuous. By Lemma \ref{le:1109b} both integrands are in $\WRd$ and thus integrable. 
The relation \eqref{fund-Fourier0} follows via Fubini's theorem (which is easy to prove for
Riemann integrals): 
\begin{equation}
\begin{aligned}
\intRd f(t) \hat g(t) \, dt &=
\intRd f(t)\left(\intRd e^{-2 \pi i x \cdot t} g(x) \,  dx  \right ) \, dt\\
& = 
 \intRd  g(x) \left(\intRd e^{-2\pi i x\cdot t} f(t) \, dt \right ) \, dx\\
&= \intRd  \hat f(x) g(x) \, dx.
\end{aligned}
\end{equation}
The convolution theorem \eqref{eq:conv-theorem} is shown in a similar fashion, making use of the exponential law via the identity
$e^{2 \pi i s \cdot t}=e^{2 \pi i s \cdot (t-y)} e^{2 \pi x \cdot y}. $


The Riemann-Lebesgue lemma tells us that the Fourier transform of a function in the Wiener algebra is a function in $\CORd$. As such, they are not necessarily integrable and we have the same issues with it as in Section \ref{sec:CO} (which lead us to the Wiener algebra). Because we can not guarantee that the Fourier transform of a function in the Wiener algebra is integrable, we  can not always apply the inverse Fourier transform (we also have to show that it is actually a transform which inverts the forward Fourier transform on the given domain), 
\[ \IFT f(t) = \intRd f(s) \, e^{2\pi i s \cdot t} \, dt, \ \ t\in\Rd.\]
Therefore we introduce the 
following Fourier invariant function space: 
\begin{equation} \label{WFWdef} 
\WFWRd = \big\{ f\in\Wsp(\Rn) \, : \,  \hat{f}\in \Wsp(\Rn)\big\}. 
\end{equation} 
This space has been studied by B\"urger in \cite{bu81-1} (using the symbol ${\mathscr{B}_{0}}$). 
It is a Banach space with respect to the natural norm 
$\Vert f \Vert_{\WFW} = \Vert f \Vert_{\Wsp} + \Vert \hat{f} \Vert_{\Wsp}$. 
It is non-trivial and in fact 
dense in $\WRdN$) because it contains the Gauss function and all its shifted and modulated versions. 

The Banach space $\WFW$ is well-suited for the formulation of results in Fourier analysis, such as the Fourier inversion theorem:  
\begin{theorem}
\begin{enumerate}
    \item[(i)] For any $f\in \WFW(\Rd)$ the \emph{Fourier inversion formula} holds pointwise,
\begin{equation}  \label{FourInvWWF} 
f(t) = \IFT\hat f(t) = \int_{\Rn} \hat{f}(s) \, e^{2\pi i s\cdot t} \, ds \ \ \text{for all} \ \ t\in \Rd.
\end{equation} 
\item[(ii)] For any pair of functions $f,g\in\WFW(\Rd)$ the \emph{Parseval identity} holds,
\begin{equation} \label{eq:parseval} 
\intRd \hat{f}(t) \, \overline{ \hat{g}(t)} \, dt = \intRd f(t) \, g(t) \, dt.\end{equation}
\item[(iii)] For any $f,g\in\WFW(\Rd)$ we have the formula $\widehat{f\cdot h} = \hat{f} * \hat{g}$.
\item[(iv)] For any $f\in \WFW(\Rd)$ the Poisson formula holds pointwise:
given $m,d \in\bN_{0}$  with $0\le m \le d$ and any 
non-singular $d\times d$ matrix $A$, 
\begin{equation} \label{eq:poisson-gen} \int_{\bR^{m}} \sum_{k\in \bZ^{d-m}} f ( A( x,k) ) \, dx = \frac{1}{\det(A)} \sum_{k\in \bZ^{d-m}} \hat{f} (A^{\dagger} (0,k)),\end{equation}
where $A^{\dagger}$ is the inverse transpose of the matrix $A$.
\end{enumerate}
\end{theorem}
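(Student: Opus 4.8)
The four assertions organize naturally: (i), the inversion formula, is the engine, (ii) and (iii) are quick corollaries, and the genuine work lies in (iv). My plan for (i) is a Gauss--Weierstrass summability argument built on the fundamental identity \eqref{fund-Fourier0}. Fix $t\in\Rd$ and $\rho>0$ and consider $I_\rho:=\intRd\hat f(s)\,e^{2\pi i s\cdot t}\,g_0(\rho s)\,ds$. The integrand is $\hat f$ times $g=E_t\Drho g_0$, and because the standard Gaussian is Fourier self-reciprocal one computes its \emph{forward} transform explicitly as $\hat g=T_t\Strho g_0$ (a direct Gaussian integral, so no inversion is invoked yet); feeding this into \eqref{fund-Fourier0} identifies $I_\rho=(f\ast\Strho g_0)(t)$. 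I would then let $\rho\to 0$. On one side, $\Strho g_0$ is an approximate identity ($\|\Strho g_0\|_1=1$ with mass concentrating at the origin), so $(f\ast\Strho g_0)(t)\to f(t)$ since $f\in\WRd\subset\CORd$ is uniformly continuous (Lemma \ref{useosc02}). On the other side, $g_0(\rho s)\to 1$ pointwise while $\hat f\in\WRd$ is integrable with tails controlled by $\sum_n\|\hat f\cdot\psi_n\|_\infty$, so $I_\rho\to\IFT\hat f(t)$ by a dominated-convergence estimate for Riemann integrals. Equating the two limits gives \eqref{FourInvWWF}.

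With (i) in hand, (ii) and (iii) are essentially formal. For the Parseval identity \eqref{eq:parseval} I would replace one factor on the right by its inversion integral from (i) and interchange the order of integration (Fubini for Riemann integrals, legitimate since the factors lie in $\WRd\subset\Lisp$); the inner integral is then a Fourier transform, and the conjugation--flip rule $\overline{\hat g}=\widehat{(\overline g)^{\checkmark}}$ reduces the double integral to \eqref{eq:parseval}. For (iii) I would apply the convolution theorem \eqref{eq:conv-theorem} to the pair $\hat f,\hat g\in\WRd$, obtaining $\widehat{\hat f\ast\hat g}=\widehat{\hat f}\cdot\widehat{\hat g}=f^{\checkmark}\cdot g^{\checkmark}=(f\cdot g)^{\checkmark}$, where $\widehat{\hat f}=f^{\checkmark}$ is exactly (i); since $\hat f\ast\hat g\in\WFW$ (its transform is the flip of $f\cdot g\in\WRd$), applying inversion once more yields $\hat f\ast\hat g=\widehat{f\cdot g}$. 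Injectivity of $\FT$ from the Riemann--Lebesgue Lemma could also close this step.

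The Poisson formula (iv) is where the main effort lies. First I would remove the matrix $A$ by the substitution $t=As$: the change of variables turns the left-hand integral-sum of $f$ into that of $f\circ A$, the dilation rule gives $\widehat{f\circ A}=\tfrac{1}{|\det A|}\,\hat f\circ A^{\dagger}$, and Lemma \ref{le:translation-wiener} guarantees $f\circ A\in\WFW$; this reduces everything to the case $A=I$, up to the factor $1/\det A$ in \eqref{eq:poisson-gen}. For $A=I$ I would introduce the partial periodization in the last $d-m$ variables, $P(x',x'')=\sum_{k\in\bZ^{d-m}}f(x',x''+k)$ with $x'\in\bR^m$ and $x''\in\bR^{d-m}$, which is $\bZ^{d-m}$-periodic in $x''$. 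Expanding $P(x',\cdot)$ in a Fourier series identifies its coefficients as the partial Fourier transforms $c_k(x')=\int_{\bR^{d-m}}f(x',s)\,e^{-2\pi i k\cdot s}\,ds$; evaluating the series at $x''=0$ gives $\sum_k f(x',k)=\sum_k c_k(x')$, and integrating over $x'\in\bR^m$ together with a sum--integral interchange turns $\int_{\bR^m}c_k(x')\,dx'$ into $\hat f(0,k)$, which is precisely the $A=I$ identity. The endpoints $m=0$ (pure lattice Poisson) and $m=d$ (where \eqref{eq:poisson-gen} degenerates to $\int_{\bR^d}f=\hat f(0)$) are recovered as special cases.

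The hard part throughout (iv) is to justify, using only the Riemann integral and the amalgam structure, that the periodization converges and that the Fourier series of $P(x',\cdot)$ converges pointwise at $x''=0$, together with the sum--integral interchange. This is exactly where the hypothesis $f\in\WFW$---rather than merely $f\in\WRd$---is essential: the Wiener norm $\|f\|_\Wsp=\sum_n\|f\cdot\psi_n\|_\infty$ controls lattice samplings and periodizations, so $P$ converges uniformly and is continuous, while $\hat f\in\WRd$ forces $\sum_k\sup_{x'}|c_k(x')|<\infty$ (via the restriction estimate of Lemma \ref{le:Wiener-restrict} applied to the coordinate subspace carrying the $c_k$), giving an absolutely and uniformly convergent Fourier series whose value at $x''=0$ may be read off termwise. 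Once absolute convergence of both the samples $\sum_k f(\cdot,k)$ and the spectral values $\sum_k\hat f(0,k)$ is secured, every interchange above is a legitimate manipulation of absolutely convergent Riemann integrals and series.
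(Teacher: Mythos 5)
Your argument for part (i) coincides with the paper's own proof: the authors likewise insert $g=\FT(E_x\Drho g_0)=T_x\Strho g_0$ into the fundamental identity \eqref{fund-Fourier0}, read one side as $(f\ast\Strho g_0)(x)$ and the other as a Gaussian-damped version of the inversion integral, and pass to the limit using the approximate-identity property of $\Strho g_0$ together with $\Drho g_0\to 1$ uniformly on compact sets. The only real difference is that the paper stops there --- it explicitly proves \emph{only} (i) --- so your treatments of (ii)--(iv) have no counterpart to compare against. Those sketches are sound in outline: (ii) and (iii) are the standard formal consequences of \eqref{fund-Fourier0}, \eqref{eq:conv-theorem} and the identity $\widehat{\hat f}=f^{\checkmark}$ supplied by (i), and (iv) is the classical periodization/Fourier-series proof of Poisson summation, with the Wiener amalgam norm providing exactly the absolute convergence of lattice samples and periodizations that the interchanges require. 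Two small cautions. First, the standard computation in (ii) produces $\intRd f\,\overline{g}$ rather than the unconjugated $\intRd f\,g$ printed in \eqref{eq:parseval} (the paper itself later applies the identity in the conjugated form), so you should flag that discrepancy rather than claim to land exactly on the stated formula. Second, in (iv) the step asserting that the Fourier series of the periodization converges pointwise to the periodization at the origin needs an explicit uniqueness ingredient (e.g.\ Fej\'er's theorem for continuous periodic functions) on top of absolute summability of the coefficients: absolute summability alone yields \emph{some} continuous limit, and one must still identify it with the periodized function.
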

\begin{proof} We only prove (i), 
starting from  the fundamental identity of Fourier analysis, \eqref{fund-Fourier0}.  
Denote by $g_{0}$ the Gaussian, with  $g_{0}(t) = e^{-\pi t\cdot t}$. It has the remarkable property of {\it being invariant under the Fourier transform!}
Consequently, due to properties of the Fourier transform, we have 
\begin{equation}  \label{GaussFour123}
\FT (E_{\omega} \Drho {g_{0}}) =  T_x \Strho g_{0}, \ \  x \in \Rdst, \ \rho > 0. 
\end{equation} 
In (\ref{fund-Fourier0}) we choose  $g=\FT(E_{x}\Drho g_{0})$, and 
 find that for any $f\in \WFWRd$,
\begin{equation} \label{FourInvFast2}
f\ofp{x} = \lim_{\rho\to\infty} \int f\ofp{t} \,
[ \Trans_x \Strho g_0] \ofp{t} \, dt \\
\stackrel{(\ref{fund-Fourier0})}{=} \lim_{\rho\to\infty} \int \fhat\ofp{t} \,
[ E_x \Drho g_{0}] \ofp{t} \, dt
= \int \fhat\ofp{t} \, e^{2\pi itx} \, dt.
\end{equation}
The first limit is justified because 
$ \intRd h(x) \Strho g_0 = h(0)$ for any $h \in \CORd$. If we apply this to $h = T_{-x}f \in \WRd \subset \CORd$, it results in the equality
$$ f(x) = f(0+x) =  T_{-x}f(0) = \lim_{\rho \to 0}
\intRd f(t+x) \Strho g_0(t) dt, $$
which is equal to the expression in the first limit.
For the convergence of the second argument we use the fact that $\hatf \in \WRd$ by the density of $\CcRd$ in $\WRdN$ one can restrict the attention to convergence of $\Drho g_{0}(t) \to 1$ for $\rho \to 0$,
uniformly over compact sets. Details are left to the reader. Reading the left hand side as a function of $x$ it is easily
reinterpreted as $\Strho g_0 \ast f \ofp{x} $, which tends to $f(x)$
uniformly for any $f \in \CORd$, but also in the Wiener
norm for $f \in \WRdN$. A detailed proof of the Fourier invariance of the Gauss function can be found in Example 1.3.3 of \cite{be96} or in
E.~Stein's book (\cite{st70}).


\end{proof}

The Poisson formula \eqref{eq:poisson-gen} is often ``only'' formulated as the Poisson \emph{summation} formula. In this case we set $m=0$ in \eqref{eq:poisson-gen} and obtain
\begin{equation} \label{eq:poisson-sum} \sum_{k\in \bZ^{d}} f ( A k )  = \frac{1}{\det(A)} \sum_{k\in \bZ^{d}} \hat{f} (A^{\dagger} k).\end{equation}
If we apply \eqref{eq:poisson-sum} to the function $E_{\omega}T_{x}f$, $f\in \WFW(\Rd)$, then we find that
\begin{equation} \label{eq:poisson-sum2} \sum_{k\in \bZ^{d}} e^{2\pi i \, A k \cdot \omega} f( A k -x)  = \frac{e^{2\pi i\, \omega \cdot x}}{\det(A)} \sum_{k\in \bZ^{d}} e^{2\pi i \,A^{\dagger} k\cdot x} \hat{f} (A^{\dagger} k-\omega),\end{equation}
for any invertible $d\times d$ matrix $A$, any $x,\omega\in\Rd$ and any $f\in \WFWRd$.
As a concrete example we apply \eqref{eq:poisson-sum2} to the Fourier invariant Gauss function $f(t) = e^{-\pi \, t\cdot t}$, $t\in \Rd$.
This yields the equality
\begin{equation} \label{eq:poisson-gauss} \sum_{k\in \bZ^{d}} e^{-\pi \, ( A k \cdot A k - 2 \, Ak \cdot (x+i\omega))}  = \frac{e^{\pi i\, (x+i\omega)^{2}}}{\det(A)} \sum_{k\in \bZ^{d}} e^{-\pi \, ( A^{\dagger} k \cdot A^{\dagger} k - 2 \, A^{\dagger} k \cdot (\omega+ix) )}. \end{equation}

In principle we could already start a ``simplified distribution theory'' on the basis of the function space
$\WFWRd$, by considering its dual space as the reservoir of generalized functions. 
Since $\WFWRd$ consists only of bounded
and integrable functions that dual space 
already contains Dirac measure (point evaluation functionals) $\delta_{x_0}(f): f(x_0)$,
or integrable as well as bounded or periodic
functions, and even objects like Dirac combs.

However there is one drawback of this space: 
We cannot prove a \emph{kernel theorem}, which is the ``continuous analogue'' of the matrix representation of a linear mapping from $\Rst^n$ to $\Rst^m$ by matrix multiplication with a well defined 
$m \times n$-matrix $\Asp$, see Section \ref{sec:kernel}.
For this we need the \emph{tensor factorization property} of the underlying Banach space of test functions. We will consider this property in the subsequent section by introducing an even smaller space of Banach algebra of test functions, the Segal algebra $\SORdN$\footnote{Also
called {\it Feichtinger's algebra} in the literature.}, which satisfies all the properties that we are interested in.

\section{Tensor factorization}

While the space of functions $\WFW$ is convenient for Fourier analysis, 
it is not suitable enough for our purposes as there is a crucial property we are interested in, namely the tensor factorization property. We explain it here for the space $\WFW$. This notion can be defined analogously for the other spaces we have considered so far, and also for the space $\SO$ that we will define in the next section.

Given two functions, $f^{(1)}, f^{(2)} \in \WFW(\RRm)$ their tensor-product is:
\begin{equation}     \label{tensordef1} 
 \big( f^{(1)}\otimes f^{(2)} \big) (x,y) = f^{(1)} (x) \cdot f^{(2)} (y), \quad (x,y)\in \RRn\times\RRm,
\end{equation} 
This function belongs to $\WFW(\RR^{n+m})$,
and   there is some constant $c>0$ such that 
\begin{equation} \label{tensorEstim1} 
\Vert f^{(1)}\otimes f^{(2)} \Vert_{\WFW(\RR^{n+m})} \le \, c \, \, \Vert f^{(1)} \Vert_{\WFW(\RRn)} \, \Vert f^{(2)} \Vert_{\WFW(\RRm)}, 
\end{equation}
for all $f^{(1)}\in \WFW(\RRn)$ and $f^{(2)}\in \WFW(\RRm)$.

With the help of tensor products we can construct a new Banach space,
the \emph{projective tensor product} of $\WFW(\RRn)$ and $\WFW(\RRm)$,
\begin{align*} \WFW(\RRn)\, \widehat{\otimes}\,\WFW(\RRm) = \Big\{ F \in \WFW(\RR^{n+m}) \, : & \, F = \sum_{j=1}^{\infty} f_{j}^{(1)} \otimes f_{j}^{(2)}, \text{ and where } \\ & \text{ furthermore }
 \sum_{j=1}^{\infty} \Vert f_{j}^{(1)} \Vert_{\Wsp} \, \Vert f_{j}^{(2)} \Vert_{\Wsp} < \infty \Big\}. 
\end{align*}
The norm of a function $F\in \WFW(\RRn)\,\widehat{\otimes}\,\WFW(\RRm)$ is given by
\[ \Vert F \Vert_{\WFW(\RRn)\,\widehat{\otimes}\,\WFW(\RRm)} = \inf \Big\{ \sum_{j=1}^{\infty} \Vert f_{j}^{(1)} \Vert_{\WFW(\RRn)} \, \Vert f_{j}^{(2)} \Vert_{\WFW(\RRm)} \Big\},\]
where the infimum is taken over all possible representations of $F$ of the type $\sum_{j=1}^{\infty} f_{j}^{(1)} \otimes f_{j}^{(2)}$ as described above. 
One can show that 
\begin{equation} \label{eq:1009a}  \WFW(\RRn) \, \, \widehat{\otimes} \, \WFW(\RRm)  \subsetneq \WFW(\RR^{n+m}).   
\end{equation}
That is, the Banach space $\WFW$ does \emph{not} have the tensor factorization property. If so, there would be an equal sign in \eqref{eq:1009a}.

We therefore ask the following: can we find a Banach space of functions that is well-suited for Fourier analysis (such as $\WFW$) and which does have the tensor factorization property.

\section{The Feichtinger algebra}

In this section we answer the question we posed in the last section. We define a Banach space of functions, to be denoted by $\SORd$, that is very well suited for Fourier analysis, it has the tensor factorization property and consequently allows for the formulation of a kernel theorem. It therefore is the Banach space of test functions that we wish for. Figures~\ref{fig:dummy} and \ref{fig:plot} give an overview of this and the other spaces that we have considered so far. In relation to the much used Schwartz space we mention that it is a dense subspace of $\SO$. Functions in $\SO$, however, need not be differentiable.

\begin{figure}[!htb]
\centering
\begin{tikzpicture}[scale=1.4]
\node [rotate=45,anchor=west] (1) at (-0.8,0) {Banach space};
\node [rotate=45,anchor=west] (1) at (-0.3,0) {convolution with};
\node [rotate=45,anchor=west] (1) at (0,0) {bounded measures};
\node [rotate=45,anchor=west] (2) at (0.5,0) {integration};
\node [rotate=45,anchor=west] (3) at (1,0) {embedded into};
\node [rotate=45,anchor=west] (3) at (1.3,0) {its dual space};
\node [rotate=45,anchor=west] (4) at (1.8,0) {domain for the};
\node [rotate=45,anchor=west] (4) at (2.1,0) {Fourier transformation};
\node [rotate=45,anchor=west] (4) at (2.6,0) {Fourier inversion theorem};
\node [rotate=45,anchor=west] (4) at (3.1,0) {Fourier invariant};
\node [rotate=45,anchor=west] (4) at (3.6,0) {Poisson formula};
\node [rotate=45,anchor=west] (5) at (4.1,0) {tensor factorization}; 
\node [rotate=45,anchor=west] (5) at (4.4,0) {property};
\node [rotate=45,anchor=west] (5) at (4.9,0) {kernel theorem};

\node at (-0.8,-0.3) {$\Csp_{0}$};
\node at (-0.8,-0.6) {$\Wsp$};
\node at (-0.8,-0.9) {$\WFW$};
\node at (-0.8,-1.2) {$\SO$};
\node at (-0.15,-0.3) {$\checkmark$};
\node at (-0.15,-0.6) {$\checkmark$};
\node at (-0.15,-0.9) {$\checkmark$};
\node at (-0.15,-1.2) {$\checkmark$};
\node at (0.5,-0.3) {--};
\node at (0.5,-0.6) {$\checkmark$};
\node at (0.5,-0.9) {$\checkmark$};
\node at (0.5,-1.2) {$\checkmark$};
\node at (1.15,-0.3) {--};
\node at (1.15,-0.6) {$\checkmark$};
\node at (1.15,-0.9) {$\checkmark$};
\node at (1.15,-1.2) {$\checkmark$};
\node at (1.95,-0.3) {--};
\node at (1.95,-0.6) {$\checkmark$};
\node at (1.95,-0.9) {$\checkmark$};
\node at (1.95,-1.2) {$\checkmark$};
\node at (2.6,-0.3) {--};
\node at (2.6,-0.6) {--};
\node at (2.6,-0.9) {$\checkmark$};
\node at (2.6,-1.2) {$\checkmark$};
\node at (3.1,-0.3) {--};
\node at (3.1,-0.6) {--};
\node at (3.1,-0.9) {$\checkmark$};
\node at (3.1,-1.2) {$\checkmark$};
\node at (3.6,-0.3) {--};
\node at (3.6,-0.6) {--};
\node at (3.6,-0.9) {$\checkmark$};
\node at (3.6,-1.2) {$\checkmark$};
\node at (4.25,-0.3) {--};
\node at (4.25,-0.6) {--};
\node at (4.25,-0.9) {--};
\node at (4.25,-1.2) {$\checkmark$};
\node at (4.9,-0.3) {--};
\node at (4.9,-0.6) {--};
\node at (4.9,-0.9) {--};
\node at (4.9,-1.2) {$\checkmark$};
\end{tikzpicture}
\caption{An overview of some of the properties of the four Banach spaces of functions that we consider, $\CORdN$, $\WRN$, $(\WFWRn, \Vert \cdot \Vert_{\WFW})$ and $\SORdN$.}
\label{fig:dummy}
\end{figure}

\begin{figure} 
\centering
\includegraphics[scale = .9]{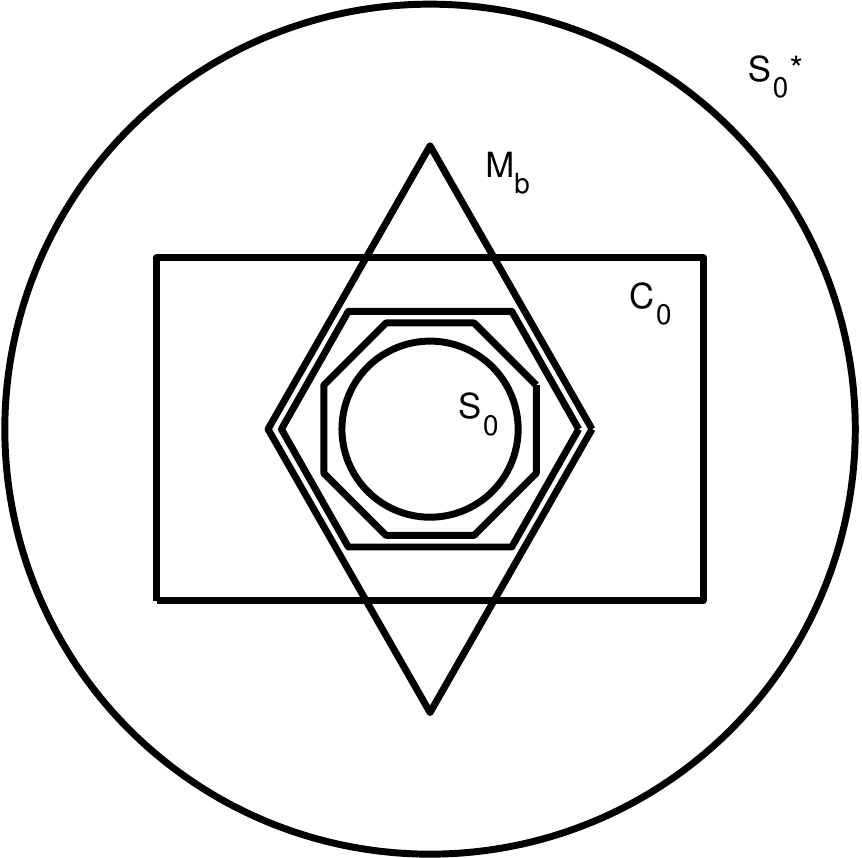}
\caption{The figure visualizes the collection of the spaces that we have considered and their relative (non-)inclusions. In the center we have $\SO$.
Slightly larger, the octagon, is the space $\WFW$. In turn, it is
contained in Wiener's algebra $\Wsp$, which is depicted by a hexagon.
The space $\COsp$ contains all these three spaces, but it is not completely contained in $\Mbsp$. The space $\SOPsp$ contains all spaces. 
The spaces $\SORd$, $\WFWRd$ and $\SOPsp$ are invariant under the Fourier transform. This is represented by a symbol that does not change under rotation by 90 degrees.}
\label{fig:plot}
\end{figure}
 
First we have to introduce the {\it Short-Time Fourier Transform} (or STFT) of a function with respect to a window function $g$.
There are various different assumptions which ensure the pointwise existence (and continuity) of the STFT as a function over the {\it time-frequency plane} or {\it phase space}. We introduct it as follows. 

For a function $g\in \WRd$, the so-called {Gabor window}, which is typically a non-negative, even function concentrated near zero,  we define the {\it short-time Fourier transform} with respect to $g$ of a function $f \in \CbRd$ to be the function
\begin{align*}
& \mathcal{V}_{g} :   \CbRd \to \Cbsp(\Rtdst), \\
& \mathcal{V}_{g}{f}(x,\omega) 
= \int_{\Rd} f(t) \, \overline{g(t-x)} \, e^{-2\pi i \omega \, t} \, dt = \FT( f \cdot \overline{T_{x}g}) (\omega), \ \ x,\omega\in\Rn.\end{align*}

It is easy to see that the definition makes sense for
$g \in \WRd, \, f \in \CbRd$ (still using the Riemann integral). \footnote{It is also a well defined function in $\Cbsp(\Rtdst)$, or for $g,f \in \LtRd$ making use of Lebesgue integration, the usual way of introducing the STFT.} Fix $g_{0}(t) = e^{-\pi \, t\cdot t}$, $t\in \Rd$ to be the Gaussian. 

\begin{definition} The space $\SO(\Rd)$ consists of all functions $f\in \CbRd$ for which $\mathcal{V}_{g_{0}}f$ is a function in  $\Wsp(\Rtd)$.\footnote{In the book \cite{rest00}, and since then, the space $\SO$ has been called the Feichtinger algebra.}
It is endowed with the norm
\[ \Vert \cdot \Vert_{\SO} : \SO(\Rd) \to \RR^{+}_{0}, \ \Vert f \Vert_{\SO} = \int_{\Rtd} \big\vert \big(\mathcal{V}_{g_{0}}f\big)(x,\omega) \big\vert \, d(x,\omega) = \Vert \mathcal{V}_{g_{0}}f \Vert_{1}.\]
\end{definition} 
 Observe that this norm is well-defined, as functions in the Wiener algebra are integrable (see Section \ref{sec:Wiener}).
 
Our goal is to establish the following key result:
\begin{theorem} \label{SOMAIN}
The space $\SORdN$ is a Banach space, which is isometrically invariant under the Fourier transform and time-frequency shifts, and in fact a Banach algebra under convolution as well as multiplication.
\end{theorem}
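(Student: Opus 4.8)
The plan is to transport every assertion to the time--frequency plane through the single map $f\mapsto\mathcal{V}_{g_{0}}f$, whose image lies in $\Wsp(\Rtd)$ by the definition of $\SO$ and which carries the norm $\|f\|_{\SO}=\|\mathcal{V}_{g_{0}}f\|_{1}$. Since $g_{0}\ne 0$ the short-time Fourier transform is injective, so $\|\cdot\|_{\SO}$ is a genuine norm; the four structural claims then reduce to the way $\mathcal{V}_{g_{0}}$ intertwines the operations on $\SO$ with motions, convolutions and products on $\Rtd$, together with the already established fact that $\Wsp(\Rtd)$ is stable under these with good norm control. First I would record the covariance relations
\[
\mathcal{V}_{g_{0}}(T_{u}f)(x,\omega)=e^{-2\pi i\,\omega\cdot u}\,\mathcal{V}_{g_{0}}f(x-u,\omega),\qquad
\mathcal{V}_{g_{0}}(E_{\eta}f)(x,\omega)=\mathcal{V}_{g_{0}}f(x,\omega-\eta),
\]
and, using the Fourier invariance $\hat{g_{0}}=g_{0}$,
\[
\mathcal{V}_{g_{0}}(\hat{f})(x,\omega)=e^{-2\pi i\,x\cdot\omega}\,\mathcal{V}_{g_{0}}f(-\omega,x).
\]
Each right-hand side is $\mathcal{V}_{g_{0}}f$ composed with a rigid motion of $\Rtd$ (a translation, resp.\ the measure-preserving rotation $(x,\omega)\mapsto(-\omega,x)$) and multiplied by a unimodular function. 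As the $\|\cdot\|_{1}$-norm on $\Rtd$ is invariant under such motions and blind to unimodular factors, the two isometry statements follow at once; membership in $\Wsp(\Rtd)$ is preserved because translation, modulation and the rotation (a dilation by an orthogonal matrix) act boundedly on $\Wsp(\Rtd)$ (Lemma~\ref{le:translation-wiener}) and because multiplication by the bounded continuous chirp $e^{-2\pi i\,x\cdot\omega}$ maps $\Wsp(\Rtd)$ into itself (Lemma~\ref{le:1109a}(ii)).

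The technical heart of the argument is the \emph{independence of the norm from the window}. For a second Gaussian $\gamma$ the two transforms are linked by a twisted convolution on $\Rtd$ whose modulus obeys $|\mathcal{V}_{\gamma}f|\le|\langle g_{0},\gamma\rangle|^{-1}\,|\mathcal{V}_{g_{0}}f|\ast|\mathcal{V}_{\gamma}g_{0}|$. Since the transform of one Gaussian against another is again a Gaussian on $\Rtd$, hence an element of $\Wsp(\Rtd)$, the convolution estimate \eqref{eq:L1-conv-algebra} shows that $\|\mathcal{V}_{\gamma}f\|_{1}$ and $\|\mathcal{V}_{g_{0}}f\|_{1}$ are comparable, so the norm may be computed with any Gaussian window. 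Completeness I would then obtain from the inversion formula $f=\|g_{0}\|_{2}^{-2}\int_{\Rtd}\mathcal{V}_{g_{0}}f(x,\omega)\,E_{\omega}T_{x}g_{0}\,d(x,\omega)$: the transform realises $\SO$ isometrically as the subspace of integrable functions fixed by the bounded reproducing-kernel projection obtained by twisted convolution against $\mathcal{V}_{g_{0}}g_{0}$. This subspace is closed, and because its kernel $\mathcal{V}_{g_{0}}g_{0}$ lies in $\Wsp(\Rtd)$, any $\|\cdot\|_{1}$-limit of such transforms again lies in $\Wsp(\Rtd)$ (using $\Lisp\ast\Wsp\subset\Wsp$, a special case of the convolution relation $\MbRd\ast\WRd\subset\WRd$); hence the limit is itself an admissible transform and $\SO$ is complete.

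For the algebra properties I would exploit the factorisation $g_{0}=h_{0}^{2}$ with $h_{0}(t)=e^{-\pi\,(t\cdot t)/2}$. Writing $\overline{T_{x}g_{0}}=(T_{x}h_{0})\,\overline{(T_{x}h_{0})}$ and applying the convolution theorem \eqref{eq:conv-theorem} in the frequency variable gives, for $f,h\in\SO\subset\CbRd$,
\[
\mathcal{V}_{g_{0}}(f\cdot h)(x,\omega)=\big(\mathcal{V}_{h_{0}}f(x,\cdot)\ast\mathcal{V}_{h_{0}}h(x,\cdot)\big)(\omega),
\]
which is legitimate because $f\cdot\overline{T_{x}h_{0}}$ and $h\cdot\overline{T_{x}h_{0}}$ lie in $\Wsp(\Rd)$ (the Wiener algebra is an ideal in $\CbRd$, Lemma~\ref{le:1109a}(ii)). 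Estimating the inner convolution in the frequency variable by \eqref{eq:L1-conv-algebra} and integrating in $x$ yields $\|f\cdot h\|_{\SO}\le\big(\sup_{x}\int_{\Rd}|\mathcal{V}_{h_{0}}f(x,\omega)|\,d\omega\big)\cdot\|\mathcal{V}_{h_{0}}h\|_{1}$; both factors are dominated by $\|f\|_{\SO}$ and $\|h\|_{\SO}$ through the window-change convolution bound of the previous paragraph (the first via a mixed-norm version of it). Thus multiplication is bounded and $\SO$ is a Banach algebra under it, after passing to an equivalent norm if submultiplicativity with constant one is desired. The convolution algebra property then comes for free from the Fourier isometry: since $\FT$ is an isometric bijection of $\SO$ and $\widehat{f\ast h}=\hat{f}\cdot\hat{h}$ by \eqref{eq:conv-theorem}, one has $\|f\ast h\|_{\SO}=\|\widehat{f\ast h}\|_{\SO}=\|\hat{f}\cdot\hat{h}\|_{\SO}\le C\,\|\hat{f}\|_{\SO}\|\hat{h}\|_{\SO}=C\,\|f\|_{\SO}\|h\|_{\SO}$.

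The hard part will be the window-independence lemma, since it silently supports everything else: the completeness argument, the mixed-norm estimate in the algebra bound, and the very fact that the product formula --- which naturally produces the auxiliary window $h_{0}$ --- can be measured back in the $g_{0}$-norm. A secondary delicate point is the completeness step, where one must verify that an $\|\cdot\|_{1}$-limit of a Cauchy sequence of transforms is again the transform of a function in $\CbRd$ and, crucially, stays inside $\Wsp(\Rtd)$ rather than merely in the integrable (or square-integrable) functions. Everything else is bookkeeping with the covariance relations and the $\Wsp$-convolution and $\Wsp$-ideal estimates already proved in Section~\ref{sec:Wiener}.
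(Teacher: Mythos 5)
Your architecture is sound and the covariance computations are correct, but you take a genuinely different route from the paper at the two load-bearing points, and at both you leave a sketch where a proof is needed. The paper never changes the window: it fixes $g_{0}$ throughout, obtains the equivalent norm $f\mapsto\Vert\mathcal{V}_{g_{0}}f\Vert_{\Wsp(\Rtd)}$ from the single reproducing identity \eqref{eq:STFT-biorth} (an explicit Gaussian computation justified by Parseval on $\WFWRd$), proves the two isometry claims exactly as you do from the covariance relations, and for the pointwise product writes $\FT(f_{1}\cdot f_{2}\cdot T_{x}g_{0})=\hat{f}_{1}*\FT(f_{2}\cdot T_{x}g_{0})$ --- peeling off all of $f_{1}$ through $\Vert\hat{f}_{1}\Vert_{\Wsp}$ rather than splitting the Gaussian as $h_{0}^{2}$ --- before deducing the convolution estimate from the Fourier isometry (it also gives the direct bound $\Vert f_{1}*f_{2}\Vert_{\SO}\le\Vert f_{1}\Vert_{1}\,\Vert f_{2}\Vert_{\SO}$). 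What your route buys is generality: window independence is a genuinely useful fact beyond this theorem, and your completeness argument via the reproducing projection onto a closed subspace of $\Wsp(\Rtd)$ is an honest addition, since the paper asserts the Banach space property but does not display a completeness proof. What the paper's route buys is economy: every claim is reduced to one identity for one window.

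The gap is that the window-change inequality $\vert\mathcal{V}_{\gamma}f\vert\le c\,\vert\mathcal{V}_{g_{0}}f\vert*\vert\mathcal{V}_{\gamma}g_{0}\vert$, which you rightly call the technical heart, is asserted but not proved, and in this Riemann-integral framework it is not free: it requires the STFT orthogonality/reproducing relations, which must themselves be derived from Parseval on $\WFWRd$ by exactly the kind of computation the paper carries out for the single window $g_{0}$ in \eqref{eq:STFT-biorth}. Since your completeness step, your mixed-norm bound $\sup_{x}\int_{\Rdst}\vert\mathcal{V}_{h_{0}}f(x,\omega)\vert\,d\omega\le C\,\Vert f\Vert_{\SO}$, and your product estimate all lean on this lemma, the proposal is incomplete until it is written out. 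Two smaller points of care: (a) applying $\widehat{u\cdot v}=\hat{u}*\hat{v}$ to $u=f\cdot\overline{T_{x}h_{0}}$ and $v=h\cdot\overline{T_{x}h_{0}}$ is only licensed by the paper for $u,v\in\WFWRd$, so you must first check $\FT(f\cdot T_{x}h_{0})=\hat{f}*E_{-x}\hat{h}_{0}\in\WRd$ (true via $\SORd\subset\WFWRd$ and $\WRd*\WRd\subset\WRd$, but it needs saying); (b) in the completeness step the limit $F$ of the transforms is a priori only an $\Lisp$-limit, so the identification $F=\mathcal{V}_{g_{0}}f$ for the candidate $f$ produced by the vector-valued inversion integral must be verified, not just the membership $F\in\Wsp(\Rtd)$.
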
 

We start by observing that 
$\SORd$ is a subspace of Wiener's algebra.
\begin{lemma} \label{le:equiv-norm} \begin{enumerate}
    \item[(i)] The Feichtinger algebra $\SORd$ is a subspace of and continuously embedded into the Wiener algebra $\WRd$.
    \item[(ii)] For any $f\in \SO(\Rd)$ it holds that $\Vert f \Vert_{1} \le \Vert f \Vert_{\SO}$ and $\Vert f \Vert_{\infty} \le \Vert f \Vert_{\SO}$.
\item[(ii)] The mapping $\SORd \to \RR_{0}^{+}, \ f\mapsto \Vert \mathcal{V}_{g_{0}} f \Vert_{\Wsp(\Rtd)}$ is an equivalent norm on $\SORd$. 
\end{enumerate}\end{lemma}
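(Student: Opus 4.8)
The plan is to derive all three parts from a single pointwise Fourier-inversion identity. I would fix $f\in\SORd$, so that $f\in\CbRd$ and $\mathcal{V}_{g_0}f\in\Wsp(\Rtd)$ by definition, and for each $x\in\Rd$ set $h_x:=f\cdot\overline{T_x g_0}$. Since the Gaussian $g_0$ lies in $\WRd$ and $\WRd$ is an ideal of $\CbRd$ (Lemma~\ref{le:1109b}(ii)), one has $h_x\in\WRd$ with $\widehat{h_x}=\mathcal{V}_{g_0}f(x,\cdot)$. Because $\mathcal{V}_{g_0}f\in\Wsp(\Rtd)$, the slice $\mathcal{V}_{g_0}f(x,\cdot)$ belongs to $\WRd$ (apply the restriction Lemma~\ref{le:Wiener-restrict} after swapping the two coordinate blocks, a continuous dilation by Lemma~\ref{le:translation-wiener}, and translating by $(x,0)$); hence $h_x\in\WFW(\Rd)$ and the Fourier inversion theorem yields
\begin{equation}\label{eq:star-plan}
 f(t)\,\overline{g_0(t-x)}=\int_{\Rd}\mathcal{V}_{g_0}f(x,\omega)\,e^{2\pi i\omega\cdot t}\,d\omega,\qquad t,x\in\Rd.
\end{equation}

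Parts (ii) and (i) then follow quickly. Taking absolute values in \eqref{eq:star-plan} gives $|f(t)|\,|g_0(t-x)|\le\int_{\Rd}|\mathcal{V}_{g_0}f(x,\omega)|\,d\omega$; integrating this in $x$, respectively in $t$, interchanging integrals by Tonelli, and using $\int_{\Rd}|g_0(t-x)|\,dx=\Vert g_0\Vert_1=1$ produces $\Vert f\Vert_\infty\le\Vert f\Vert_\SO$ and $\Vert f\Vert_1\le\Vert f\Vert_\SO$, which is (ii). For (i) I would multiply \eqref{eq:star-plan} by $g_0(t-x)$, integrate in $x$, and divide by $\Vert g_0\Vert_2^2=2^{-d/2}$; since $g_0(t-x)e^{2\pi i\omega\cdot t}=E_\omega T_x g_0(t)$ this gives the reconstruction
\begin{equation}\label{eq:recon-plan}
 f(t)=\frac{1}{\Vert g_0\Vert_2^2}\int_{\Rtd}\mathcal{V}_{g_0}f(x,\omega)\,E_\omega T_x g_0(t)\,d(x,\omega).
\end{equation}
Multiplying by $\psi_n$, taking the sup-norm, summing over $n$, and interchanging sum and integral (Tonelli) bounds $\Vert f\Vert_\Wsp$ by $\Vert g_0\Vert_2^{-2}\int_{\Rtd}|\mathcal{V}_{g_0}f(x,\omega)|\,\Vert E_\omega T_x g_0\Vert_\Wsp\,d(x,\omega)$, and $\Vert E_\omega T_x g_0\Vert_\Wsp=\Vert T_x g_0\Vert_\Wsp\le 4^d\Vert g_0\Vert_\Wsp$ by Lemma~\ref{le:translation-wiener} yields $\Vert f\Vert_\Wsp\le 2^{d/2}4^d\Vert g_0\Vert_\Wsp\Vert f\Vert_\SO$, proving (i).

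For (iii), one inequality is free: \eqref{eq:int-bounded-on-Wsp} on $\Rtd$ gives $\Vert f\Vert_\SO=\Vert\mathcal{V}_{g_0}f\Vert_1\le\Vert\mathcal{V}_{g_0}f\Vert_{\Wsp(\Rtd)}$. For the reverse I would apply $\mathcal{V}_{g_0}$ to \eqref{eq:recon-plan} and use the covariance of the STFT, namely $\mathcal{V}_{g_0}(E_\omega T_x g_0)(x',\omega')=c\,\mathcal{V}_{g_0}g_0(x'-x,\omega'-\omega)$ with a unimodular factor $c$; after taking absolute values the phase drops out and one obtains the domination $|\mathcal{V}_{g_0}f|\le\Vert g_0\Vert_2^{-2}\,|\mathcal{V}_{g_0}f|*|\mathcal{V}_{g_0}g_0|$ on $\Rtd$. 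A direct Gaussian computation shows $\mathcal{V}_{g_0}g_0$ equals a Gaussian times a phase, so $|\mathcal{V}_{g_0}g_0|\in\Wsp(\Rtd)$ (that is, $g_0\in\SO$), while $|\mathcal{V}_{g_0}f|\in\Wsp(\Rtd)$ by solidity (Lemma~\ref{le:W-is-solid}). Reading $|\mathcal{V}_{g_0}f|$ as a measure of norm $\le\Vert\mathcal{V}_{g_0}f\Vert_1=\Vert f\Vert_\SO$, the module property $\MbRd*\WRd\subset\WRd$ (the Proposition, applied on $\Rtd$) and solidity give $\Vert\mathcal{V}_{g_0}f\Vert_{\Wsp(\Rtd)}\le C\Vert f\Vert_\SO$, completing the equivalence.

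The hard part will be manufacturing \eqref{eq:star-plan}, \eqref{eq:recon-plan} and the reproducing inequality rigorously within Riemann integration: checking that each slice $\mathcal{V}_{g_0}f(x,\cdot)$ really lies in $\WRd$ so that Fourier inversion applies, justifying the Fubini/Tonelli interchanges, and—above all—the covariance identity for $\mathcal{V}_{g_0}(E_\omega T_x g_0)$ together with the explicit verification that $\mathcal{V}_{g_0}g_0\in\Wsp(\Rtd)$, i.e.\ $g_0\in\SO$. Once the pointwise domination is in hand, the final convolution estimate is routine given the module property and the solidity of the Wiener algebra.
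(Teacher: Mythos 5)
Your proposal is correct, and it runs on the same two engines as the paper's own proof: pointwise Fourier inversion applied to $f\cdot T_{x}g_{0}$ (justified by exactly the slice argument via Lemma~\ref{le:Wiener-restrict} that you describe) and the reproducing property of the Gaussian-window STFT. The packaging differs in one place worth noting. The paper gets (i) by first proving $\Vert f\Vert_{\Wsp}\le 4^{d}\Vert \mathcal{V}_{g_{0}}f\Vert_{\Wsp(\Rtd)}$ through a Riemann--Lebesgue-type estimate (this inequality is also half of (iii)) and then bounding $\Vert\mathcal{V}_{g_{0}}f\Vert_{\Wsp(\Rtd)}$ by $\Vert f\Vert_{\SO}$ using the biorthogonality relation \eqref{eq:STFT-biorth}, which it derives from Parseval; you instead integrate the inversion identity against $g_{0}(t-x)$ to obtain the full reconstruction formula and then run two parallel convolution-domination arguments, one for $\Vert f\Vert_{\Wsp}$ and one for $\Vert\mathcal{V}_{g_{0}}f\Vert_{\Wsp(\Rtd)}$. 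Your route makes the underlying structure more transparent --- the domination $\vert\mathcal{V}_{g_{0}}f\vert\le c\,\vert\mathcal{V}_{g_{0}}f\vert*\vert\mathcal{V}_{g_{0}}g_{0}\vert$ combined with $\Mbsp(\Rtd)*\Wsp(\Rtd)\subset\Wsp(\Rtd)$ and solidity --- at the cost of having to justify the reconstruction formula and the covariance identity (the latter is \eqref{VgTFshift1}, which the paper only records afterwards); the paper keeps everything inside the single identity \eqref{eq:STFT-biorth}. Two small points to tidy: for $\Vert f\Vert_{1}\le\Vert f\Vert_{\SO}$ you must integrate along the fibre $t-x=s$ fixed (substitute $x=t-s$ before integrating over $t$), since the right-hand side of your pointwise bound depends on $x$ but not on $t$; and the finiteness of $\Vert\mathcal{V}_{g_{0}}g_{0}\Vert_{\Wsp(\Rtd)}$, which you rightly flag as needing the explicit Gaussian computation, is used silently in the paper's proof as well.
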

\begin{proof}
Observe that for any $x,s\in\Rd$ we have
\[ \vert f(x) \, g_{0}(s) \vert \le \Vert f \cdot T_{s-x} g_{0} \Vert_{\infty}.\]
Since $f\in\CbRd$, $g_{0}\in\WRd$ and because the translation operator is continuous on $\WRd$, it follows from Lemma \ref{le:1109b} that $f\cdot T_{s-x}g_{0}\in \WRd$ for any $x,s\in\Rd$. 
Furthermore, by assumption $f$ is such that 
\[ (x,\omega)\mapsto\mathcal{V}_{g_{0}}f(x,\omega) = \intRd f(t) \, g_{0}(t-x) \, e^{-2\pi i x\cdot t} \, dt = \mathcal{F}(f\cdot T_{x}g_{0})(\omega) \]
is a function in $\Wsp(\RR^{2d})$. This implies, by Lemma \ref{le:Wiener-restrict}, that for fixed $x\in\Rd$ the function $\omega\mapsto \mathcal{F}(f\cdot T_{x}g_{0})(\omega)$ belongs to $\WRd$ as well. We may therefore apply the Fourier inversion formula, so that, for any $x,s\in\Rd$,
\[ \IFT \FT (f \cdot T_{s-x} g_{0}) = f \cdot T_{s-x} g_{0}.\]
By the Riemann-Lebesgue lemma
\begin{equation} \label{eq:2409c}  \Vert \IFT \FT (f \cdot T_{s-x} g_{0}) \Vert_{\infty} \le  \Vert \FT (f \cdot T_{s-x} g_{0}) \Vert_{\Wsp} =  \sum_{m\in\bZ^{d}} \Vert \FT (f \cdot T_{s-x} g_{0}) \cdot \psi_{m} \Vert_{\infty}.\end{equation}
A combination of the observed facts  yields the inequality
\[ \vert f(x) \, g_{0}(s) \vert \le \sum_{m\in\bZ^{d}} \Vert \FT (f \cdot T_{s-x} g_{0}) \cdot \psi_{m} \Vert_{\infty}.\]
Hence
\[ \sup_{x\in\Rd} \vert f(x) \, g_{0}(s)  \psi_{n}(x) \vert \le  \sum_{m\in\bZ^{d}} \, \sup_{x,\omega\in\Rd}  \vert \FT (f \cdot T_{s-x} g_{0})(\omega) \cdot \psi_{m}(\omega) \psi_{n}(x) \vert.\]
Summing over $n$, and using that the translation operator is continuous on $\WRd$ allows us to deduce that 
\[ \sum_{n\in\bZ^{d}} \Vert f \cdot \psi_{n} \Vert_{\infty} \, \vert g_{0}(s) \vert \le 4^{d} \, \sum_{n,m\in\bZ^{d}} \big\vert \mathcal{V}_{g_{0}} f(x,\omega) \, \psi_{n}(x) \, \psi_{m}(\omega) \big\vert = 4^{d} \Vert \mathcal{V}_{g_{0}} f \Vert_{\Wsp},\]
for any $s\in\Rd$ and $f\in\SORd$. 
It follows that
\begin{equation} \label{eq:2409a} \Vert f \Vert_{\Wsp} \le 4^{d} \, \Vert g_{0} \Vert_{\infty}^{-1} \, \Vert \mathcal{V}_{g_{0}}f  \Vert_{\Wsp} = 4^{d} \, \Vert \mathcal{V}_{g_{0}}f  \Vert_{\Wsp}.\end{equation}
We now show that there exists a constant $c>0$ such that  
\[ \Vert \mathcal{V}_{g_{0}}f  \Vert_{\Wsp} \le c \, \Vert  f  \Vert_{\SO} \ \ \text{for all} \ \ f\in\SO(\Rd).\]
We first establish the following equality: for any $f\in \SORd$ and $x,\omega\in\Rd$
\begin{align} & \int_{\Rtd} \mathcal{V}_{g_{0}}f(t,\xi) \, \overline{\mathcal{V}_{g_{0}} [E_{\omega}T_{x}g_{0}](t,\xi)} \, d(t,\xi) \nonumber \\
& = \int_{\Rtd} \FT(f\cdot T_{t} g_{0})(\xi) \, \overline{\FT([E_{\omega}T_{x}g_{0}]\cdot T_{t} g_{0})(\xi)} \, d(t,\xi) \nonumber \\
& \stackrel{\eqref{eq:parseval}}{=} \int_{\Rtd} (f\cdot T_{t} g_{0})(s) \, \overline{([E_{\omega}T_{x}g_{0}]\cdot T_{t} g_{0})(s)} \, d(t,s) \nonumber \\
& = \intRd f(s) \, \overline{E_{\omega}T_{x}g_{0}(s)} \, \intRd g_{0}(s-t) \, g_{0}(s-t) \, dt \, ds = 2^{-d/2} \, \mathcal{V}_{g_{0}}f(x,\omega). \label{eq:STFT-biorth}
\end{align}
The use of \eqref{eq:parseval}
 is justified as both $f\cdot T_{t}g_{0}$ and $\FT(f\cdot T_{t} g_{0})$ are functions in the Wiener algebra (as already establish earlier in the proof). 
We now observe the following: 
\begin{align*}
& \Vert \mathcal{V}_{g_{0}} f \Vert_{\Wsp} = \sum_{m,n\in\bZ^{d}} \sup_{x,\omega} \big\vert \mathcal{V}_{g_{0}} f(x,\omega) \, \psi_{n}(x) \, \psi_{m}(x) \, \big\vert \\
& \stackrel{\eqref{eq:STFT-biorth}}{=} 2^{d/2} \sum_{m,n\in\bZ^{d}} \sup_{x,\omega} \Big\vert \int_{\Rtd} \mathcal{V}_{g_{0}}f(t,\xi) \, \overline{\mathcal{V}_{g_{0}} [E_{\omega}T_{x}g_{0}](t,\xi)} \, d(t,\xi)
 \, \psi_{n}(x) \, \psi_{m}(\omega) \, \Big\vert \\
 & \le 2^{d/2} \int_{\Rtd} \vert \mathcal{V}_{g_{0}}f(t,\xi) \, \Vert  T_{t,\xi} \mathcal{V}_{g_{0}}g_{0} \Vert_{\Wsp} \, d(t,\xi) \\
 & \le 2^{9d/2} \Vert \mathcal{V}_{g_{0}} g_{0} \Vert_{\Wsp} \int_{\RR^{2d}} \vert \mathcal{V}_{g_{0}}f(t,\xi) \vert \, d(t,\xi) = 2^{9d/2} \, \Vert \mathcal{V}_{g_{0}} g_{0} \Vert_{\Wsp} \, \Vert f \Vert_{\SO}.
\end{align*}
The second equality follows by the boundedness of the translation operator on the Wiener algebra. 
Combining the just established inequality with \eqref{eq:2409a} yields
\[ \Vert f \Vert_{\Wsp} \le 2^{13d/2} \, \Vert \mathcal{V}_{g_{0}} g_{0} \Vert_{\Wsp} \, \Vert f \Vert_{\SO} \ \ \text{for all} \ \ f\in\SORd.\]
Furthermore, we have just established that
\[ \Vert \mathcal{V}_{g_{0}} f \Vert_{\Wsp} \le 2^{9d/2} \, \Vert \mathcal{V}_{g_{0}} g_{0} \Vert_{\Wsp} \, \Vert f \Vert_{\SO}  \ \ \text{for all} \ \ f\in\SORd.\]
The inequality $\Vert f \Vert_{\SO} \le \Vert \mathcal{V}_{g_{0}}f \Vert_{\Wsp}$ is clear from \eqref{eq:int-bounded-on-Wsp}. We have thus shown (i) and (iii). In order to show (ii) we replace \eqref{eq:2409c} with the inequality
\[ \Vert \IFT \FT(f\cdot T_{s-x}g_{0}) \Vert_{\infty} \le \Vert \FT(f\cdot T_{s-x}g_{0}) \Vert_{1}, \]
and make similar steps as before. We then obtain the estimate
\[ \vert f(x) g_{0}(s) \vert \le \int_{\Rd} \big\vert \mathcal{V}_{g_{0}}f(s-x,\omega) \big\vert \, d\omega \ \ \text{for all} \ \ x,s\in\Rd. \]
An integration over $x\in\Rd$ and taking the supremum over $s$ yields 
\[ \Vert f \Vert_{1} \, \Vert g_{0}\Vert_{\infty} \le \int_{\Rtd} \big\vert \mathcal{V}_{g_{0}}f(s-x,\omega) \big\vert \, d(x,\omega) =  \Vert f \Vert_{\SO}.\]
Switching the role of $x$ and $s$ implies the inequality $\Vert f \Vert_{\infty} \le \Vert f \Vert_{\SO}$.
This shows (ii).
\end{proof}

As every function in $\SORd$ belongs to $\WRd$ we can apply the Fourier transform to the space $\SORd$. It turns out that $\SORd$ is invariant under the Fourier transform.

\begin{proposition} \label{pr:SO-F-invariant}
The Fourier transform 
  %
  is an isometric bijection from $\SO(\Rn)$ onto itself, i.e.\  $\Vert \FT f \Vert_{\SO} = \Vert f \Vert_{\SO}$ for all $f\in\SORd$.
\end{proposition}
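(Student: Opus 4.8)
The plan is to reduce the statement to a single \emph{covariance identity} relating $\mathcal{V}_{g_0}(\hat f)$ to $\mathcal{V}_{g_0} f$, exploiting the Fourier invariance $\hat{g_0}=g_0$. First I would fix $f\in\SORd$, which lies in $\WRd$ by Lemma~\ref{le:equiv-norm}, and regard $x,\omega\in\Rd$ as parameters. For such fixed parameters the integrand defining the STFT factors as $\hat f\cdot\phi$ with $\phi(t)=g_0(t-x)\,e^{-2\pi i\omega\cdot t}$, that is $\phi=E_{-\omega}T_x g_0$. Since $g_0\in\WRd$ and translation and modulation are continuous on $\WRd$ (Lemma~\ref{le:translation-wiener}), one has $\phi\in\WRd$, so the fundamental identity \eqref{fund-Fourier0} applies and gives $\mathcal{V}_{g_0}(\hat f)(x,\omega)=\int_{\Rd} f(s)\,\hat\phi(s)\,ds$.

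The second step is to compute $\hat\phi$. A direct substitution $r=t-x$ together with $\hat{g_0}=g_0$ yields $\hat\phi(s)=e^{-2\pi i x\cdot\omega}\,e^{-2\pi i x\cdot s}\,g_0(s+\omega)$. Because $g_0$ is real-valued and even, feeding this back into the integral produces the clean covariance relation
\[ \mathcal{V}_{g_0}(\hat f)(x,\omega)=e^{-2\pi i x\cdot\omega}\,\mathcal{V}_{g_0}f(-\omega,x),\qquad x,\omega\in\Rd, \]
and in particular $\big|\mathcal{V}_{g_0}(\hat f)(x,\omega)\big|=\big|\mathcal{V}_{g_0}f(-\omega,x)\big|$, the unimodular phase having dropped out.

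The third step is integration. Applying the change of variables $(x,\omega)\mapsto(-\omega,x)$, a rotation of $\Rtd$ with Jacobian of modulus $1$, I obtain
\[ \Vert \FT f\Vert_{\SO}=\int_{\Rtd}\big|\mathcal{V}_{g_0}(\hat f)(x,\omega)\big|\,d(x,\omega)=\int_{\Rtd}\big|\mathcal{V}_{g_0}f(-\omega,x)\big|\,d(x,\omega)=\Vert f\Vert_{\SO}. \]
This shows both that $\hat f\in\SORd$ and that $\FT$ is isometric on $\SORd$; in particular $\FT$ maps $\SORd$ into itself, and consequently $\SORd\subset\WFWRd$. For bijectivity I would invoke the inversion formula \eqref{FourInvWWF}, valid on $\WFWRd$: it gives $\FT^2 h=h^{\checkmark}$ and hence $\FT^4=\mathrm{Id}$ on $\WFWRd\supseteq\SORd$. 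Since $\FT$ maps $\SORd$ into itself, so does $\FT^3$, and it is a two-sided inverse of $\FT$ there; combined with the isometry this makes $\FT$ an isometric bijection of $\SORd$.

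I expect the main obstacle to be the phase-and-argument bookkeeping in Step~2 — correctly tracking the factor $e^{-2\pi i x\cdot\omega}$ and the swapped, sign-flipped arguments $(-\omega,x)$ — and confirming that the modulus and the measure-preserving change of variables make these harmless. The only other point needing care is the justification that \eqref{fund-Fourier0} is legitimately applicable for every fixed $(x,\omega)$, i.e.\ that both $f$ and $\phi=E_{-\omega}T_x g_0$ genuinely lie in $\WRd$, which is exactly what Lemmas~\ref{le:equiv-norm} and \ref{le:translation-wiener} provide.
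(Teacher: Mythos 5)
Your argument is correct and follows essentially the same route as the paper: both derive the covariance identity $\mathcal{V}_{g_0}(\FT f)(x,\omega)=e^{-2\pi i x\cdot\omega}\,\mathcal{V}_{g_0}f(-\omega,x)$ from the fundamental identity \eqref{fund-Fourier0} together with the Fourier invariance of $g_0$, and then integrate out the unimodular phase under the measure-preserving rotation $(x,\omega)\mapsto(-\omega,x)$. The one point to tighten is that membership of $\FT f$ in $\SORd$ requires, by definition, $\mathcal{V}_{g_0}(\FT f)\in\Wsp(\Rtd)$ and not merely finiteness of its integral; this follows immediately from your identity because the phase factor and the coordinate change preserve $\Wsp(\Rtd)$, which is precisely the remark the paper inserts at that step.
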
 
\begin{corollary}
$\SO(\Rd)$ is continuously embedded into $\WFWRd$.
\end{corollary}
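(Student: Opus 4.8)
The plan is to read the Corollary off from the Fourier invariance already recorded in Proposition~\ref{pr:SO-F-invariant}, combined with the continuous inclusion $\SORd\hookrightarrow\WRd$ of Lemma~\ref{le:equiv-norm}(i). Recall that membership $f\in\WFWRd$ means precisely that both $f$ and $\widehat f$ lie in $\WRd$, with norm $\Vert f\Vert_{\WFW}=\Vert f\Vert_{\Wsp}+\Vert\widehat f\Vert_{\Wsp}$. So it suffices to bound each of these two summands by a fixed constant multiple of $\Vert f\Vert_{\SO}$; the Fourier invariance of the $\SO$-norm is exactly what converts the bound on $\Vert f\Vert_{\Wsp}$ into one on $\Vert\widehat f\Vert_{\Wsp}$.

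Concretely, I would fix the constant $c>0$ from Lemma~\ref{le:equiv-norm}(i), so that $\Vert g\Vert_{\Wsp}\le c\,\Vert g\Vert_{\SO}$ for every $g\in\SORd$. Applying this to $g=f$ gives $f\in\WRd$ with $\Vert f\Vert_{\Wsp}\le c\,\Vert f\Vert_{\SO}$. By Proposition~\ref{pr:SO-F-invariant} the transform $\widehat f$ again belongs to $\SORd$ and $\Vert\widehat f\Vert_{\SO}=\Vert f\Vert_{\SO}$; applying Lemma~\ref{le:equiv-norm}(i) now to $g=\widehat f$ yields $\widehat f\in\WRd$ with $\Vert\widehat f\Vert_{\Wsp}\le c\,\Vert\widehat f\Vert_{\SO}=c\,\Vert f\Vert_{\SO}$. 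Hence $f\in\WFWRd$ and
\begin{equation*}
\Vert f\Vert_{\WFW}=\Vert f\Vert_{\Wsp}+\Vert\widehat f\Vert_{\Wsp}\le 2c\,\Vert f\Vert_{\SO},
\end{equation*}
which is exactly the asserted continuous embedding $\SORd\hookrightarrow\WFWRd$.

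The only nontrivial input here is the Fourier invariance of $\SORd$ from Proposition~\ref{pr:SO-F-invariant}; everything else is bookkeeping with the embedding constant, so within the Corollary itself there is no real obstacle. I therefore expect the main difficulty to sit one level up, in that Proposition — specifically in establishing $\widehat f\in\SORd$, i.e.\ $\mathcal{V}_{g_{0}}\widehat f\in\Wsp(\Rtd)$. The route I would take is the covariance identity $\mathcal{V}_{g_{0}}\widehat f(a,b)=e^{-2\pi i\,a\cdot b}\,\mathcal{V}_{g_{0}}f(-b,a)$, derived from the fundamental identity \eqref{fund-Fourier0} and the Fourier invariance $\widehat{g_{0}}=g_{0}$: its right-hand side is $\mathcal{V}_{g_{0}}f\in\Wsp(\Rtd)$ multiplied by a unimodular continuous function and precomposed with an orthogonal change of variables, and both operations preserve $\Wsp(\Rtd)$ by Lemma~\ref{le:1109b}(ii) and the dimension-$2d$ version of Lemma~\ref{le:translation-wiener}. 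The norm equality $\Vert\widehat f\Vert_{\SO}=\Vert f\Vert_{\SO}$ then follows from the same identity by a Jacobian-one substitution. Granting Proposition~\ref{pr:SO-F-invariant}, the Corollary is immediate by the computation above.
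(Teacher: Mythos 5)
Your proof is correct and follows exactly the route the paper intends: the corollary is stated as an immediate consequence of Proposition~\ref{pr:SO-F-invariant} (Fourier invariance of the $\SO$-norm) together with the continuous embedding $\SORd\hookrightarrow\WRd$ from Lemma~\ref{le:equiv-norm}, and your bookkeeping with the constant $c$ yielding $\Vert f\Vert_{\WFW}\le 2c\,\Vert f\Vert_{\SO}$ is precisely the intended argument. Your sketch of the underlying Proposition also matches the paper's own proof via the covariance identity derived from \eqref{fund-Fourier0}.
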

That $\SORd$ is a proper subspace of $\WFWRd$ was shown by Losert \cite[Theorem 2]{lo80}. Observe that the inclusion $\SORd\subset \WFWRd$ implies that all the statements in relation to the Fourier transform in Section \ref{sec:Fourier-transform-on-WFW} also hold for all functions in $\SORd$. 

\noindent \textit{Proof of Proposition \ref{pr:SO-F-invariant}.}  $\,$ 
First of all $\SORd\subset\WRd$, so that $\FT f$,  is a well-defined function in $\CORd$.
Since $g_{0}\in\WRd$ and $\SORd)\subset \WRd$, we can use the fundamental identity of Fourier analysis to establish the following,
\begin{align*}
     \mathcal{V}_{g_{0}} \hat{f}(x,\omega) & = \intRd \hat{f}(t) \, \overline{g_{0}(t-x)} e^{-2\pi i \omega t} \, dt \stackrel{\eqref{fund-Fourier0}}{=} \intRd f(t) \, \FT\big( \overline{E_{\omega}T_{x}g_{0}}\big)(t)  \, dt \\
     & = e^{-2\pi i x\cdot \omega} \, \mathcal{V}_{g_{0}}{f}(-\omega,x).
\end{align*}
Observe that the phase factor $e^{2\pi i x\cdot \omega}$ and also the change of variable $(x,\omega)\mapsto(-\omega,x)$ are continuous operators on the Wiener algebra, so that also $\mathcal{V}_{g_{0}}\hat{f}$ belongs to $\Wsp(\Rtd)$. Moreover, the operations leave the $\SO$-norm invariant. Indeed, 
\begin{align*}
    & \Vert \hat f \Vert_{\SO} = \int_{\Rtd} \vert  \mathcal{V}_{g_{0}}\hat{f} (x,\omega) \vert \, d(x,\omega) = \int_{\Rtd} \vert e^{-2\pi i x\cdot \omega} \, \mathcal{V}_{g_{0}}{f}(-\omega,x) \vert \, d(x,\omega) \\
    & = \int_{\Rtd} \vert \mathcal{V}_{g_{0}}f (x,\omega) \vert \, d(x,\omega) = \Vert f \Vert_{\SO}.
\end{align*}
The same proof shows that also the inverse Fourier transform maps $\SO(\Rd)$ into itself. It is therefore clear that $\FT$ is a continuous bijection on $\SO(\Rd)$. \\

Concerning the continuity properties of the translation and modulation operator we easily establish the following.

\begin{lemma}
\begin{enumerate}
    \item[(i)] Translation and modulation operators are isometries on $\SO(\Rd)$:  
    \begin{equation}
        \label{eq:TFshift-sO-norm}
    \Vert E_{\omega} T_{x} f \Vert_{\SO} = \Vert f \Vert_{\SO} \ \ \text{for all} \ \ x,\omega\in \Rd \ \ \text{and} \ \ f\in \SO(\Rd).\end{equation}
    \item[(ii)] If $f$ belongs to $\SO(\Rd)$, then so does $\overline{f}$ and $f^{\checkmark}$ and
    \begin{equation} \label{flipisom1} 
\Vert \overline{f} \Vert_{\SO} = \Vert f^{\checkmark} \Vert_{\SO} = \Vert f \Vert_{\SO} \ \ \text{for all} \ \ f\in \SO(\Rd).
    \end{equation} 
\end{enumerate}
\end{lemma}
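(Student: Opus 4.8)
The plan is to reduce every claim to a \emph{covariance identity} for the short-time Fourier transform $\mathcal{V}_{g_0}$ with respect to the fixed Gaussian window, exactly in the spirit of the computation of $\mathcal{V}_{g_0}\hat f$ carried out in the proof of Proposition \ref{pr:SO-F-invariant}. Since $\Vert f\Vert_{\SO} = \Vert\mathcal{V}_{g_0}f\Vert_1$ is the $\Lsp^1$-norm of the STFT over the time-frequency plane $\Rtd$, it suffices to show that each of the operations $E_\omega T_x$, $f\mapsto\overline f$ and $f\mapsto f^{\checkmark}$ transforms $\mathcal{V}_{g_0}f$ into a function whose \emph{modulus} is a rigid motion (a translation or a reflection) of $|\mathcal{V}_{g_0}f|$. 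Because such motions are measure-preserving maps of $\Rtd$, they leave the $\Lsp^1$-norm invariant, which yields the stated isometries at once.

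First I would record the three covariance relations by direct substitution in the defining integral, using crucially that $g_0(t)=e^{-\pi t\cdot t}$ is both real-valued and even. The change of variable $s=t-x$ gives the time-frequency shift covariance
\[ |\mathcal{V}_{g_0}(E_\omega T_x f)(u,\eta)| = |\mathcal{V}_{g_0}f(u-x,\eta-\omega)|, \]
where only a unimodular phase factor $e^{2\pi i(\omega-\eta)\cdot x}$ appears before taking moduli. Conjugating under the integral and using $\overline{g_0}=g_0$ yields $|\mathcal{V}_{g_0}\overline f(u,\eta)| = |\mathcal{V}_{g_0}f(u,-\eta)|$, while the substitution $s=-t$ together with $g_0(-\,\cdot\,)=g_0$ gives $|\mathcal{V}_{g_0}f^{\checkmark}(u,\eta)| = |\mathcal{V}_{g_0}f(-u,-\eta)|$. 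Integrating each identity over $(u,\eta)\in\Rtd$ and using the translation- and reflection-invariance of Lebesgue measure then produces $\Vert E_\omega T_x f\Vert_{\SO}=\Vert\overline f\Vert_{\SO}=\Vert f^{\checkmark}\Vert_{\SO}=\Vert f\Vert_{\SO}$.

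The one point that requires genuine care --- and the step I would single out as the main obstacle --- is not the equality of norms but the \emph{membership} claim: I must confirm that $E_\omega T_x f$, $\overline f$ and $f^{\checkmark}$ actually lie in $\SO(\Rd)$, i.e.\ that their STFTs belong to $\Wsp(\Rtd)$ and not merely to $\Lsp^1(\Rtd)$. Here the phase factor is a modulation on $\Rtd$, and the coordinate changes $(u,\eta)\mapsto(u-x,\eta-\omega)$, $(u,\eta)\mapsto(u,-\eta)$ and $(u,\eta)\mapsto(-u,-\eta)$ are, respectively, a translation and two dilations by the invertible matrices $\operatorname{diag}(I_d,-I_d)$ and $-I_{2d}$ on $\Rtd$. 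Since Lemma \ref{le:translation-wiener} guarantees that translations, modulations and dilations by invertible matrices are all bounded on $\Wsp(\Rtd)$, and since $\mathcal{V}_{g_0}f\in\Wsp(\Rtd)$ by hypothesis, each transformed STFT again lies in $\Wsp(\Rtd)$. This establishes that the operations preserve $\SO(\Rd)$, completing both parts of the lemma.
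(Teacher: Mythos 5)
Your proposal is correct and takes essentially the same route as the paper: the covariance identity for $\mathcal{V}_{g_0}$ under time-frequency shifts (the paper's equation \eqref{VgTFshift1}), the invariance of $\Wsp(\Rtd)$ under translations, modulations and reflections to secure membership in $\SO(\Rd)$, and then a measure-preserving change of variables in the $\Lsp^1$-integral over the time-frequency plane. The only difference is that the paper writes out part (i) and dismisses part (ii) as ``similar,'' whereas you supply the explicit conjugation and flip covariances and the corresponding reflection matrices---precisely the details the authors leave to the reader.
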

\begin{proof}
Observe that
\begin{equation} \label{VgTFshift1} 
\mathcal{V}_{g_{0}}(E_{\omega}T_{x}f)(t,s) 
= e^{2\pi i \, x\cdot(\omega-s)} \, \mathcal{V}_{g_{0}}f(t-x,s-\omega). 
\end{equation} 
Since translation and the phase factor  leave the Wiener algebra invariant it follows that $\mathcal{V}_{g_{0}} E_{\omega}T_{x} f\in \Wsp(\Rtd)$. Hence $E_{\omega}T_{x} f\in \SO(\Rd)$ and  moreover  
\begin{align*}
     \Vert E_{\omega}T_{x} f\Vert_{\SO} & = \int_{\Rtd} \vert \mathcal{V}_{g_{0}} (E_{\omega}T_{x} g_{0})(t,s) \vert \, d(x,\omega) \\
    & = \int_{\Rtd} \vert e^{2\pi i \, x\cdot(\omega-s)} \, \mathcal{V}_{g_{0}}f(t-x,s-\omega) \vert \, d(t,s) \\
    & = \int_{\Rtd} \vert \mathcal{V}_{g_{0}}f(t,s) \, d(t,s) = \Vert f \Vert_{\SO} \end{align*} 
for any pair $(x,\omega)\in\Rtd$.
The statement in (ii) is shown in a similar fashion.
\end{proof}  

Just as the Wiener algebra and $\WFW$, also $\SO$ behaves in a nice way with respect to multiplication and convolution.
\begin{lemma}
The Banach space $\SORdN$ is a Banach algebra with respect to pointwise multiplication and convolution. Indeed, for any $f_{1},f_{2}\in \SORd$, the functions $f_{1}\cdot f_{2}$ and $f_{1}*f_{2}$ also belong to $\SORd$ and
\[ \Vert f_{1} \cdot f_{2} \Vert_{\SO} \le  \Vert f_{1} \Vert_{\SO} \, \Vert f_{2} \Vert_{\SO} \ \ \text{and} \ \ \Vert f_{1}* f_{2} \Vert_{\SO} \le \Vert f_{1} \Vert_{\SO} \, \Vert f_{2} \Vert_{\SO}. \]
\end{lemma}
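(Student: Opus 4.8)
The plan is to prove the multiplication estimate first and obtain the convolution estimate for free. Since the Fourier transform is an isometric bijection of $\SORd$ onto itself (Proposition \ref{pr:SO-F-invariant}) and carries convolution into pointwise multiplication by the convolution theorem \eqref{eq:conv-theorem}, we have $\widehat{f_{1}*f_{2}} = \hat f_{1}\cdot\hat f_{2}$ with $\hat f_{1},\hat f_{2}\in\SORd$ (using $\SORd\subset\WRd$, where both the convolution and the convolution theorem live). Hence $\Vert f_{1}*f_{2}\Vert_{\SO} = \Vert\FT(f_{1}*f_{2})\Vert_{\SO}=\Vert \hat f_{1}\cdot\hat f_{2}\Vert_{\SO}$, and any multiplicative bound $\Vert \hat f_{1}\cdot\hat f_{2}\Vert_{\SO}\le\Vert\hat f_{1}\Vert_{\SO}\Vert\hat f_{2}\Vert_{\SO}=\Vert f_{1}\Vert_{\SO}\Vert f_{2}\Vert_{\SO}$ transfers verbatim. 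Thus the two displayed inequalities are equivalent, and it suffices to treat pointwise multiplication.

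The key computation is a factorization of the STFT of a product. Writing $\phi := g_{0}\cdot g_{0}$, so that $\overline{T_{x}\phi} = \overline{T_{x}g_{0}}\cdot\overline{T_{x}g_{0}}$ (recall $g_{0}$ is real and even), the convolution theorem for the Fourier transform gives, for each fixed $x\in\Rd$,
\[ \mathcal{V}_{\phi}(f_{1}\cdot f_{2})(x,\cdot) = \FT\big[(f_{1}\cdot\overline{T_{x}g_{0}})\cdot(f_{2}\cdot\overline{T_{x}g_{0}})\big] = \big(\mathcal{V}_{g_{0}}f_{1}(x,\cdot)\big)*\big(\mathcal{V}_{g_{0}}f_{2}(x,\cdot)\big), \]
a convolution in the frequency variable alone, with the time variable $x$ frozen. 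This is the crucial structural fact: pointwise multiplication of functions becomes, on the time--frequency side, convolution in frequency coupled with a pointwise-in-time product of the two transforms.

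Next I would estimate the norm of the product through this formula, using the equivalent norm $\Vert f\Vert_{\SO}\asymp\Vert\mathcal{V}_{g_{0}}f\Vert_{\Wsp(\Rtd)}$ from Lemma \ref{le:equiv-norm}(iii). The amalgam space $\Wsp(\Rtd)$ is precisely suited to the two operations above: it is a convolution algebra in the frequency direction (with the constant $4^{d}$ from the convolution lemma for $\WRd$) and a pointwise-multiplication algebra in the time direction (Lemma \ref{le:1109b}). Handling the frozen-$x$ convolution by Young's inequality in $\omega$ and the local-sup/global-summation structure of the amalgam norm in $x$, one arrives at
\[ \Vert\mathcal{V}_{\phi}(f_{1}\cdot f_{2})\Vert_{\Wsp(\Rtd)} \le C\,\Vert\mathcal{V}_{g_{0}}f_{1}\Vert_{\Wsp(\Rtd)}\,\Vert\mathcal{V}_{g_{0}}f_{2}\Vert_{\Wsp(\Rtd)} \le C'\,\Vert f_{1}\Vert_{\SO}\,\Vert f_{2}\Vert_{\SO}. \]

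The remaining step, and the \emph{main obstacle}, is that the left-hand side is measured with the window $\phi=g_{0}^{2}$ rather than $g_{0}$, so I must pass from the $\phi$-window transform back to the $g_{0}$-window transform to recover $\Vert f_{1}f_{2}\Vert_{\SO}=\Vert\mathcal{V}_{g_{0}}(f_{1}f_{2})\Vert_{1}$. This is a change-of-window argument, carried out exactly as in the proof of Lemma \ref{le:equiv-norm}: one uses a reproducing identity of the type \eqref{eq:STFT-biorth}, now with the two distinct windows $g_{0}$ and $\phi$, recovering $\mathcal{V}_{g_{0}}h$ from $\mathcal{V}_{\phi}h$ by a twisted convolution on $\Rtd$ with $\mathcal{V}_{g_{0}}\phi$ up to an explicit constant, together with the fact that $\mathcal{V}_{g_{0}}\phi\in\Wsp(\Rtd)$ because $g_{0},\phi\in\SORd$. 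Passing to absolute values turns the twisted convolution into an ordinary one and delivers $\Vert f_{1}f_{2}\Vert_{\SO}\le C''\Vert f_{1}\Vert_{\SO}\Vert f_{2}\Vert_{\SO}$, so that $\SORd$ is a Banach algebra under multiplication, and hence under convolution. I expect the genuinely delicate point to be not this boundedness but the \emph{sharp} constant $1$ in the displayed inequality: the crude window-change and Young estimates produce Gaussian factors ($\Vert\mathcal{V}_{g_{0}}\phi\Vert_{1}$, the pairing $\langle\phi,g_{0}\rangle$, the $4^{d}$, etc.), and reaching exactly $\Vert f_{1}\cdot f_{2}\Vert_{\SO}\le\Vert f_{1}\Vert_{\SO}\Vert f_{2}\Vert_{\SO}$ requires either these to cancel under the chosen normalization of $g_{0}$ or, more robustly, replacing $\Vert\cdot\Vert_{\SO}$ by the equivalent submultiplicative norm that any Banach algebra carries.
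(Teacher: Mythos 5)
Your overall architecture is sound and genuinely different from the paper's: the window-doubling identity $\mathcal{V}_{g_{0}^{2}}(f_{1}f_{2})(x,\cdot)=\mathcal{V}_{g_{0}}f_{1}(x,\cdot)*\mathcal{V}_{g_{0}}f_{2}(x,\cdot)$ is correct (it is legitimate here because each $f_{i}\cdot T_{x}g_{0}$ lies in $\WFW$, as established inside the proof of Lemma \ref{le:equiv-norm}), the partial-convolution estimate in $\Wsp(\Rtd)$ goes through, and a change-of-window argument in the spirit of \eqref{eq:STFT-biorth} does return you to $\Vert\mathcal{V}_{g_{0}}(f_{1}f_{2})\Vert_{1}$. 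So you do prove that $\SORd$ is a Banach algebra under both operations \emph{up to a multiplicative constant}, and your observation that the two inequalities transfer into each other under the Fourier isometry of Proposition \ref{pr:SO-F-invariant} is exactly right and is also used by the paper (in the opposite direction).

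The genuine gap is the constant. The lemma asserts $\Vert f_{1}\cdot f_{2}\Vert_{\SO}\le\Vert f_{1}\Vert_{\SO}\Vert f_{2}\Vert_{\SO}$ with constant exactly $1$, and your chain (Young's inequality with the $4^{d}$ from the $\Wsp$-convolution lemma, the equivalence constants of Lemma \ref{le:equiv-norm}(iii), and the change-of-window factor $\Vert\mathcal{V}_{g_{0}}\phi\Vert$ divided by a pairing) cannot produce it; these factors do not cancel for the normalization $g_{0}(t)=e^{-\pi t\cdot t}$ (note e.g.\ $\Vert g_{0}\Vert_{\SO}=2^{d/2}\ne 1$), and passing to an equivalent submultiplicative norm changes the statement being proved. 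The paper avoids this entirely by attacking the \emph{convolution} inequality first and transferring to multiplication afterwards: since $\vert\mathcal{V}_{g_{0}}h(x,\omega)\vert=\vert(h*E_{\omega}g_{0})(x)\vert$, one has
\begin{equation*}
\Vert f_{1}*f_{2}\Vert_{\SO}=\int_{\Rtd}\big\vert\big(f_{1}*(f_{2}*E_{\omega}g_{0})\big)(x)\big\vert\,dx\,d\omega
\le\Vert f_{1}\Vert_{1}\intRd\Vert f_{2}*E_{\omega}g_{0}\Vert_{1}\,d\omega
=\Vert f_{1}\Vert_{1}\,\Vert f_{2}\Vert_{\SO}\le\Vert f_{1}\Vert_{\SO}\Vert f_{2}\Vert_{\SO},
\end{equation*}
using only the $\Lisp$-convolution inequality \eqref{eq:L1-conv-algebra} and $\Vert f_{1}\Vert_{1}\le\Vert f_{1}\Vert_{\SO}$ from Lemma \ref{le:equiv-norm}(ii); the sharp multiplication inequality then follows from $\widehat{f_{1}\cdot f_{2}}=\hat{f}_{1}*\hat{f}_{2}$ and the isometry of $\FT$ on $\SORd$. (For the mere membership $f_{1}\cdot f_{2}\in\SORd$ the paper also peels off only one window, writing $\FT(f_{1}f_{2}\,T_{x}g_{0})=\hat{f}_{1}*\FT(f_{2}T_{x}g_{0})$, which sidesteps your change-of-window step altogether.) To repair your proof you should reverse your transfer direction and insert this $\Lisp$-based argument for the convolution case.
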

\begin{proof}
Let us first establish $f_{1}\cdot f_{2}$ belongs to $\SORd$. 
\begin{align*}
    \Vert \mathcal{V}_{g_{0}} (f_{1}\cdot f_{2}) \Vert_{\Wsp} & = \sum_{m,n\in\bZ^{d}} \sup_{x,\omega\in \Rd} \big\vert \FT(f_{1}\cdot f_{2}\cdot T_{x}g_{0})(\omega) \, \psi_{n}(x) \, \psi_{m}(\omega)\, \big\vert \\
    & = \sum_{m,n\in\bZ^{d}} \sup_{x,\omega\in \Rd} \big\vert \FT(f_{1}) * \FT( f_{2}\cdot T_{x}g_{0})(\omega) \, \psi_{n}(x) \, \psi_{m}(\omega) \big\vert\\
    & = \sum_{m,n\in\bZ^{d}} \sup_{x,\omega\in \Rd} \Big\vert \intRd \FT( f_{2}\cdot T_{x}g_{0})(\omega-t) \, \hat f_{1}(t) \, dt    \, \psi_{n}(x) \, \psi_{m}(\omega) \Big\vert \\
    & \le \sum_{m,n\in\bZ^{d}} \sup_{x,\omega\in \Rd}  \intRd \vert \FT( f_{2}\cdot T_{x}g_{0})(\omega-t) \, \psi_{m}(\omega)\vert \, \vert \hat f_{1}(t) \vert\, dt    \, \psi_{n}(x) \\
    & \le \sum_{m,n\in\bZ^{d}} \sup_{x,\omega\in \Rd}  \intRd \sup_{\omega\in\Rd} \vert \FT( f_{2}\cdot T_{x}g_{0})(\omega) \, \psi_{m}(\omega+t)\vert \, \vert \hat f_{1}(t) \vert\, dt    \, \psi_{n}(x)  \\
    & \le 4^{d} \sum_{m,n\in\bZ^{d}} \sup_{x\in \Rd}  \intRd \sup_{\omega\in\Rd} \vert \FT( f_{2}\cdot T_{x}g_{0})(\omega) \, \psi_{m}(\omega)\vert \, \vert \hat f_{1}(t) \vert\, dt \, \psi_{n}(x)  \\
    & \le 4^{d} \Vert \hat{f}_{1} \Vert_{\Wsp} \, \Vert f_{2} \Vert_{\SO} \le 16^{d} \, \Vert f_{1} \Vert_{\SO} \, \Vert f_{2} \Vert_{\SO} < \infty.
\end{align*}
In the third inequality we used the same method as in the proof of Lemma \ref{le:translation-wiener} to get rid of the translation by $x$. The inequality for the convolution follows by the just established inequality, the equality $\FT(f_{1}\cdot f_{2}) = \hat{f}_{1} * \hat{f}_{2}$ and the fact that the Fourier transform is a bijection on $\SO$. We have thus established that $\mathcal{V}_{g_{0}} (f_{1}\cdot f_{2}) \in \Wsp(\Rtd)$ and $\mathcal{V}_{g_{0}} (f_{1}* f_{2}) \in \Wsp(\Rtd)$, i.e., the convolution and pointwise product of $f_{1},f_{2}$ belong to $\SO$ again. Concerning the desired estimates, we find that
\begin{align*}
    \Vert f_{1} * f_{2} \Vert_{\SO} & = \int_{\Rd} \int_{\Rd} \big\vert \FT ([f_{1} * f_{2}]\cdot T_{x} g_{0})(\omega) \big\vert \, dx \, d\omega \\
    & = \intRd \intRd \big\vert \big(f_{1} * f_{2} * E_{\omega} g_{0}\big) (x)\big\vert  \, dx \, d\omega \\
    & \le \Vert f_{1} \Vert_{1} \, \intRd \intRd \vert \big(f_{2} * E_{\omega} g_{0}\big)(x)   \, dx \, d\omega \\
    & = \Vert f_{1} \Vert_{1} \, \Vert f_{2} \Vert_{\SO} \le \Vert f_{1} \Vert_{\SO} \, \Vert f_{2} \Vert_{\SO}.
\end{align*}
The first inequality is an application of \eqref{eq:L1-conv-algebra}. The second inequality follows by Lemma \ref{le:equiv-norm}(ii). The inequality for the pointwise product follows by properties of the Fourier transform as mentioned before.\end{proof}

Among other useful properties of $\SORd$ are the following ones. In particular, $\SO$ has the tensor factorization property. 
\begin{theorem}
\begin{enumerate}
    \item[(i)] For any invertible $d\times d$-matrix $A$ the operator 
    \[ \alpha_{A} : \SORd \to \SORd, \ \alpha_{A} f(x) = \vert\det(A)\vert^{1/2} \, f(Ax), \ \ x\in \Rdst,\]
    is a continuous bijection on $\SORd$.
    \item[(ii)] For any $m\in\bN$ such that $0<m<d$, the operator
    \[ \mathcal{R}_{m} : \SORd \to \SO(\RR^{m}) , \ \mathcal{R}_{m}f(x^{(1)},\ldots,x^{(m)}) = f(x^{(1)},\ldots,x^{(m)},0,\ldots, 0), \ x^{(i)}\in\bR,\]
    is a continuous surjection.
    \item[(iii)] The sampling of a function on $\Rn$ at the integer-lattice points $\bZ^{d}$ 
    \[ \mathcal{R}_{\bZ^{d}} : \SORd  \to \ell^{1}(\bZ^{d}), \ \mathcal{R}_{\bZ^{d}}f(k) = f(k), \ \ k\in\bZ^{d} \]
    is a continuous and surjective operator from $\SO(\Rd)$ onto $\ell^{1}(\bZ^{d})$.
    \item[(iv)] For any $m\in\bN$ such that $0<m<d$ the operator
    \begin{align*}
         & \mathcal{P}_{m} : \SO(\Rn)\to \SO(\Rst^m), \\ 
         & \mathcal{P}_{m} f(x) = \int_{\bR^{n-m}} f(x^{(1)},\ldots,x^{(m)},x^{(m+1)},\ldots,x^{(n)}) \, dx^{(m+1)} \ldots dx^{(n)}, \\
         & x = (x^{(1)},\ldots,x^{(m)})\in \Rm.
    \end{align*}
    is a continuous surjection.
    \item[(v)] The periodization of functions on $\Rn$ with respect to the integer lattice $\bZ^{n}$
    \[ \mathcal{P}_{\bZ^{n}} : \SO(\Rn)\to \Asp([0,1]^{n}), \ \mathcal{P}f(x) = \sum_{k\in\bZ^{n}} f(x+k), \ \ x\in [0,1]^{n},\]
    is a continuous and surjective operator from $\SORd$ onto $A([0,1]^{n})$, the space of all $\bZ^{n}$-periodic functions with absolutely-summable Fourier coefficients.
    \item[(vi)] $\SO(\RRn)\,\widehat{\otimes}\,\SO(\RRm)=\SO(\RR^{n+m})$ for any $n,m\in\NN$.
\end{enumerate}
\end{theorem}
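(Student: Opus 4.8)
The plan is to make the tensor factorization \textup{(vi)} the core of the argument, to deduce \textup{(ii)}, \textup{(iv)} and \textup{(v)} from it, to prove \textup{(iii)} essentially directly, and to treat the dilation statement \textup{(i)} by a change-of-window computation. I would organize everything around the identification $\SO(\RR^{n+m})\cong\SO(\RRn)\,\widehat{\otimes}\,\SO(\RRm)$, under which the restriction, projection and periodization operators each become an operator of the form $\mathrm{id}\otimes\sigma$ for a single bounded functional $\sigma$ acting on the variables that are ``sliced out'', so that their continuity is reduced to boundedness of $\sigma$ and their surjectivity to tensoring with one fixed auxiliary function.

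For \textup{(vi)} the inclusion $\SO(\RRn)\,\widehat{\otimes}\,\SO(\RRm)\subseteq\SO(\RR^{n+m})$ is the easy half: since the Gaussian factorizes, $g_{0}^{(n+m)}=g_{0}^{(n)}\otimes g_{0}^{(m)}$, the short-time Fourier transform factorizes as $\mathcal{V}_{g_{0}}(f\otimes h)\big((x,x'),(\omega,\omega')\big)=\mathcal{V}_{g_{0}}f(x,\omega)\,\mathcal{V}_{g_{0}}h(x',\omega')$, so $\mathcal{V}_{g_{0}}(f\otimes h)$ lies in $\Wsp(\RR^{2(n+m)})$ and, after the measure-preserving permutation that reorders the coordinates into $\big((x,\omega),(x',\omega')\big)$, its $\Lsp^{1}$-norm factorizes, giving $\Vert f\otimes h\Vert_{\SO}=\Vert f\Vert_{\SO}\,\Vert h\Vert_{\SO}$ (the $\SO$-analogue of \eqref{tensordef1}). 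Applying this termwise to a representation $F=\sum_{j}f_{j}\otimes h_{j}$ and taking the infimum yields $\Vert F\Vert_{\SO}\le\Vert F\Vert_{\SO(\RRn)\,\widehat{\otimes}\,\SO(\RRm)}$. The reverse inclusion is the \emph{main obstacle}: here I would establish a continuous reconstruction (resolution of the identity) formula
\[ F=c\int_{\RR^{2(n+m)}}\mathcal{V}_{g_{0}}F(z)\,E_{\omega}T_{x}\,g_{0}\,dz,\qquad z=(x,\omega),\]
with $c>0$ the constant appearing in \eqref{eq:STFT-biorth}; its justification within the Riemann-integral framework is exactly the reproducing computation already carried out for coherent states in \eqref{eq:STFT-biorth}, now run for a general argument and relying on the Parseval relation \eqref{eq:parseval}. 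Because $g_{0}^{(n+m)}$ factorizes, each $E_{\omega}T_{x}g_{0}$ is a tensor product of time--frequency--shifted Gaussians, and time--frequency shifts are $\SO$-isometries by \eqref{eq:TFshift-sO-norm}, so the displayed formula realizes $F$ as a tensor-valued integral whose projective tensor norm is bounded by $c\,\Vert g_{0}^{(n)}\Vert_{\SO}\,\Vert g_{0}^{(m)}\Vert_{\SO}\int\vert\mathcal{V}_{g_{0}}F\vert=C\,\Vert F\Vert_{\SO}$. Approximating the integral by Riemann sums (finite tensor sums) and invoking completeness of the projective tensor product then gives $F\in\SO(\RRn)\,\widehat{\otimes}\,\SO(\RRm)$ with $\Vert F\Vert_{\widehat{\otimes}}\le C\,\Vert F\Vert_{\SO}$. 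Making the reconstruction formula rigorous with Riemann integrals and controlling the passage from the integral to an absolutely convergent series is where the real work lies.

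Granting \textup{(vi)}, the remaining parts follow cleanly. For \textup{(ii)} the restriction $\mathcal{R}_{m}$ becomes $\mathrm{id}\otimes\mathrm{ev}_{0}$ under $\SO(\RR^{d})\cong\SO(\RR^{m})\,\widehat{\otimes}\,\SO(\RR^{d-m})$, and point evaluation is bounded because $\Vert g\Vert_{\infty}\le\Vert g\Vert_{\SO}$ (Lemma~\ref{le:equiv-norm}(ii)); surjectivity follows by tensoring a given $h$ with a fixed $\phi\in\SO(\RR^{d-m})$ with $\phi(0)=1$. Likewise \textup{(iv)} is $\mathrm{id}\otimes I$, with $I$ the integration functional, bounded since $\vert\int g\vert\le\Vert g\Vert_{1}\le\Vert g\Vert_{\SO}$, and surjectivity uses a slice $\phi$ with $\int\phi=1$. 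For \textup{(iii)}, continuity of sampling is immediate from $\SO\subset\WRd$ (Lemma~\ref{le:equiv-norm}(i)), since the Wiener norm dominates $\sum_{k}\vert f(k)\vert$; surjectivity is obtained by choosing $\phi\in C_{c}^{\infty}(\RR^{d})$ supported in the open unit cube with $\phi(0)=1$, so that $\phi(k)=0$ for $k\in\bZ^{d}\setminus\{0\}$, and setting $f=\sum_{k}c_{k}\,T_{k}\phi$, which lies in $\SO$ with $\Vert f\Vert_{\SO}\le\Vert\phi\Vert_{\SO}\,\Vert(c_{k})\Vert_{\ell^{1}}$ by the translation isometry \eqref{eq:TFshift-sO-norm}. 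Finally \textup{(v)} reduces to \textup{(iii)} through the Fourier transform: the Fourier coefficients of $\mathcal{P}_{\bZ^{n}}f$ are the samples $\hat{f}(j)$, $j\in\bZ^{n}$ (a Poisson-type identity, cf.\ \eqref{eq:poisson-sum}), whence $\Vert\mathcal{P}_{\bZ^{n}}f\Vert_{\Asp}=\sum_{j\in\bZ^{n}}\vert\hat{f}(j)\vert=\Vert\mathcal{R}_{\bZ^{n}}\hat{f}\Vert_{\ell^{1}}$, and both continuity and surjectivity transfer from \textup{(iii)} using that $\FT$ is an isometric bijection on $\SO$ (Proposition~\ref{pr:SO-F-invariant}).

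For \textup{(i)} I would compute the dilation covariance of the STFT: a change of variables shows that $\mathcal{V}_{g_{0}}(\alpha_{A}f)$ equals, up to a unimodular phase, $\mathcal{V}_{\alpha_{A^{-1}}g_{0}}f$ composed with an invertible linear map of the time--frequency plane. Since $\alpha_{A^{-1}}g_{0}$ is again a Gaussian window in $\SO$, and since both a change of window and an invertible linear coordinate change preserve membership in $\Wsp$ up to equivalent norms (the change-of-window principle follows from the same reproducing identity as \eqref{eq:STFT-biorth}, while the coordinate change is handled exactly as the dilation step in Lemma~\ref{le:translation-wiener}), one concludes $\alpha_{A}f\in\SO$ with $\Vert\alpha_{A}f\Vert_{\SO}\le C_{A}\,\Vert f\Vert_{\SO}$; bijectivity is then immediate from $\alpha_{A}^{-1}=\alpha_{A^{-1}}$.
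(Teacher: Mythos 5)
The first thing to note is that the paper does not actually prove this theorem: its ``proof'' is the single remark that all statements follow from \cite[Theorem 7]{fe81-1}. Any real argument is therefore a different route, and yours is essentially the standard one from that literature. The architecture --- establish the tensor factorization (vi) first, then read (ii) and (iv) as $\mathrm{id}\otimes\sigma$ for a bounded functional $\sigma$ (evaluation at $0$, resp.\ integration), with surjectivity obtained by tensoring against one fixed slice --- is correct in outline, as are the direct parts: (iii) follows from $\sum_{k}\vert f(k)\vert\le\Vert f\Vert_{\Wsp}\le C\,\Vert f\Vert_{\SO}$ together with the absolutely convergent series $\sum_k c_k T_k\phi$ (your $\phi$ must of course be supported in a cube \emph{centered at the origin} so that $\phi(0)=1$ is compatible with $\phi(k)=0$ for $k\ne 0$); the reduction of (v) to (iii) via $\widehat{\mathcal{P}_{\bZ^{n}}f}(j)=\hat f(j)$ and Proposition~\ref{pr:SO-F-invariant} is clean; and the covariance $\mathcal{V}_{g_{0}}(\alpha_{A}f)=\mathcal{V}_{\alpha_{A^{-1}}g_{0}}f\circ(A\times A^{\dagger})$ is the right computation for (i). Note, however, that the change-of-window principle you invoke there (membership of $\mathcal{V}_{g}f$ in $\Wsp$ is independent of the admissible window $g$) is itself a nontrivial lemma that the paper never states; it needs the reproducing identity together with $\MbRd*\WRd\subset\WRd$ and the solidity of $\Wsp$ to be carried out.

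The genuine gap is exactly where you locate it: the inclusion $\SO(\RR^{n+m})\subseteq\SO(\RRn)\,\widehat{\otimes}\,\SO(\RRm)$. Writing $F$ as the continuous superposition $c\int\mathcal{V}_{g_{0}}F(z)\,E_{\omega}T_{x}g_{0}\,dz$ and noting that each $E_{\omega}T_{x}g_{0}$ is an elementary tensor of constant $\SO\otimes\SO$-norm gives the right bound only \emph{formally}: membership in the projective tensor product requires an absolutely convergent \emph{series} of elementary tensors, and the passage from the vector-valued integral to such a series (Bochner integrability of $z\mapsto\mathcal{V}_{g_{0}}F(z)\,E_{\omega}T_{x}g_{0}$ in the complete space $\SO(\RRn)\,\widehat{\otimes}\,\SO(\RRm)$, plus the verification that the integral actually reproduces $F$ --- which \eqref{eq:STFT-biorth} only establishes when tested against coherent states, so an injectivity argument for $\mathcal{V}_{g_{0}}$ is still needed) is precisely the atomic decomposition theorem for $\SO$, i.e.\ precisely the content of \cite[Theorem 7]{fe81-1} that the authors decline to reprove. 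So the plan is sound and can in principle be completed, but as written the crux of (vi), on which (ii) and (iv) then lean, is asserted rather than proved.
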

\begin{proof}
We are not in the position to give a proof, as this requires more theory and details about $\SO$ than we are willing to give here. The statements all follow from \cite[Theorem 7]{fe81-1}.
\end{proof}

To highlight the role of $\SO(\Rd)$ among all Banach spaces of functions within $\WRd$, we give the following characterization. It is a direct consequence of \cite[Theorem 7.6]{ja19}

\begin{theorem} For each $d\in\NN$ let $(\Bsp(\Rd), \Vert \cdot \Vert_{\Bsp})$ be a non-trivial Banach space such that $\Bsp(\RRd)\subseteq \Wsp(\RRd)$. If for each $d\in \NN$ the Banach space $\Bsp(\RRd)$ has the properties that
\begin{enumerate}
    \item[(i)] there is a constant $c>0$ such that $\Vert f \Vert_{\Wsp(\Rd)} \le c \, \Vert f \Vert_{\Bsp(\Rd)}$ for all $f\in \Bsp(\Rd)$,
    \item[(ii)] for all $(x,\omega)\in \Rtd$ the time-frequency shift operators $E_{\omega}T_{x}$ is bounded on $\Bsp(\Rd)$ with a uniformly bounded operator norm over all $(x,\omega)\in \Rtd$,
    \item[(iii)] for every invertible $d\times d$-matrix $A$ the operator $f\mapsto f \circ A$ is bounded on $\Bsp(\Rd)$,
    \item[(iv)] the Fourier transform is a bounded operator from $\Bsp(\Rd)$ into $\Wsp(\Rd)$,
    \item[(v)] and $\Bsp(\RRn)\,\widehat{\otimes}\, \Bsp(\RRm) = \Bsp(\RR^{n+m})$ for all $n,m\in\NN$,
\end{enumerate}
then $(\Bsp(\Rn),\Vert \cdot \Vert_{\Bsp}) = \SORdN$ for all $d\in\NN$.
\end{theorem}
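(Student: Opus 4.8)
The plan is to establish the two continuous embeddings $\SO(\Rd)\hookrightarrow\Bsp(\Rd)$ and $\Bsp(\Rd)\hookrightarrow\SO(\Rd)$ for every $d$; since both are Banach spaces, this yields $\Bsp(\Rd)=\SORdN$ with equivalent norms, uniformly in $d$. Throughout I use that $\SO$ itself satisfies (i)--(v) (by the results of this section), and that by (i) one has $\Bsp(\Rd)\subseteq\Wsp(\Rd)\subseteq\LtRd$, so that $\Ltsp$-inner products of elements of $\Bsp$ against fixed windows are well defined.

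First I would prove the minimality inclusion $\SO(\Rd)\hookrightarrow\Bsp(\Rd)$, which needs only non-triviality together with (i) and (ii). Choose $g\in\Bsp(\Rd)$ with $g\neq 0$; since $\mathcal{V}_{g_{0}}g\not\equiv 0$, after replacing $g$ by a suitable time-frequency shift (still in $\Bsp$ by (ii)) I may assume $\langle g, g_{0}\rangle\neq 0$. For $f\in\SO(\Rd)$ the continuous reproducing formula for the short-time Fourier transform reads $f=\langle g, g_{0}\rangle^{-1}\int_{\RR^{2d}}\mathcal{V}_{g_{0}}f(x,\omega)\,E_{\omega}T_{x}g\,d(x,\omega)$. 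Interpreting this as a vector-valued integral in $\Bsp(\Rd)$ and using the uniform bound $\Vert E_{\omega}T_{x}g\Vert_{\Bsp}\le C\Vert g\Vert_{\Bsp}$ from (ii), the integral converges in the Banach space $\Bsp$ and reproduces $f$ (the two values agree because both integrals also converge in $\LtRd$, into which $\Bsp$ embeds by (i)). Recalling that $\Vert f\Vert_{\SO}=\Vert\mathcal{V}_{g_{0}}f\Vert_{1}$, this gives $\Vert f\Vert_{\Bsp}\le C\,|\langle g, g_{0}\rangle|^{-1}\,\Vert g\Vert_{\Bsp}\,\Vert\mathcal{V}_{g_{0}}f\Vert_{1}=C'\Vert f\Vert_{\SO}$.

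For the reverse inclusion $\Bsp(\Rd)\hookrightarrow\SO(\Rd)$ I would argue in two steps. The soft step uses (i) and (iv): for $f\in\Bsp(\Rd)$ one has $f\in\Wsp(\Rd)$ and $\hat f\in\Wsp(\Rd)$, so $\Bsp(\Rd)\hookrightarrow\WFW(\Rd)$ continuously. The essential step realizes the short-time Fourier transform inside the $\Bsp$-calculus. Since $g_{0}\in\SO(\Rd)\subseteq\Bsp(\Rd)$ by the first inclusion, property (v) gives $f\otimes g_{0}\in\Bsp(\RR^{2d})$ with norm $\lesssim\Vert f\Vert_{\Bsp}$, and applying (iii) in dimension $2d$ with the invertible shear $(t,x)\mapsto(t,t-x)$ produces $H(t,x)=f(t)\,g_{0}(t-x)\in\Bsp(\RR^{2d})$, whose partial Fourier transform in the first variable equals $\mathcal{V}_{g_{0}}f$ up to a permutation of the two coordinate blocks. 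Since $f\in\SO(\Rd)$ is equivalent to $\mathcal{V}_{g_{0}}f\in\Wsp(\RR^{2d})$, it suffices to bound this partial transform of $H$ in the Wiener norm. The clean way to see what is needed is the following conditional reduction: \emph{if} the Fourier transform were bounded from $\Bsp(\Rd)$ into $\Bsp(\Rd)$ (rather than only into $\Wsp$), then by the universal property of the projective tensor product and the factorization (v) the partial transform $\FT\otimes\mathrm{id}$ would be bounded on $\Bsp(\Rd)\,\widehat{\otimes}\,\Bsp(\Rd)=\Bsp(\RR^{2d})$, whence $\mathcal{V}_{g_{0}}f\in\Bsp(\RR^{2d})\subseteq\Wsp(\RR^{2d})$ by (i), giving $f\in\SO(\Rd)$ with the required norm control.

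The main obstacle is exactly to remove the hypothesis of this conditional: to upgrade (iv) from ``$\FT\colon\Bsp\to\Wsp$'' to ``$\FT\colon\Bsp\to\Bsp$'', equivalently to control the partial Fourier transform of $H$ jointly over the whole time-frequency plane. This cannot follow from (i)--(iv) alone, for two related reasons. The Wiener algebra is not Fourier invariant --- precisely the defect that makes $\WFW$ strictly larger than $\SO$ (Losert, \cite{lo80}) --- so full-transform boundedness into $\Wsp$ does not propagate to a partial transform. Moreover the partial transform defining $\mathcal{V}_{g_{0}}$ cannot be assembled from full Fourier transforms, dilations, translations and modulations, since producing it would require a chirp (a metaplectic shear) that these operations do not generate. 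The role of the \emph{equality} in (v) is to supply the missing joint integrability that separates $\SO$ from $\WFW$, and doing so in the Wiener-amalgam norm (not merely a pointwise or $\Ltsp$ norm) is the delicate point; this is the technical heart carried out in \cite[Theorem 7.6]{ja19}. Once both continuous embeddings are in hand, equality of the spaces with equivalent norms, uniformly in $d$, is immediate.
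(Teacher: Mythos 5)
The paper offers no proof of this theorem at all---it is stated as ``a direct consequence of \cite[Theorem 7.6]{ja19}''---so there is no argument of the authors' to compare yours against; your proposal has to stand on its own. Your overall strategy (two continuous embeddings between Banach spaces) is the right one and matches the cited source. The minimality direction $\SO(\Rd)\hookrightarrow\Bsp(\Rd)$ is the standard argument, with one technical weak point: hypothesis (ii) gives only a \emph{uniform bound} for the operators $E_{\omega}T_{x}$ on $\Bsp(\Rd)$, not norm-continuity of $(x,\omega)\mapsto E_{\omega}T_{x}g$, so the strong measurability needed for your $\Bsp$-valued Bochner integral is not automatic. The standard repair is to discretize: every $f\in\SO(\Rd)$ admits an atomic decomposition $f=\sum_{n}c_{n}E_{\omega_{n}}T_{x_{n}}g$ with $\sum_{n}\vert c_{n}\vert \lesssim \Vert f\Vert_{\SO}$, and absolute convergence of this series in $\Bsp$ needs only the uniform bound.

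The genuine gap is in the reverse direction, but it is a gap you created for yourself: the conditional hypothesis ``$\FT$ bounded from $\Bsp$ into $\Bsp$'' is not needed, and the argument closes with exactly the ingredients you assembled. You have $H(t,x)=f(t)\,g_{0}(t-x)\in\Bsp(\RR^{2d})$ from (v) and (iii); the \emph{equality} in (v) then yields a decomposition $H=\sum_{j}f_{j}\otimes g_{j}$ with $\sum_{j}\Vert f_{j}\Vert_{\Bsp}\Vert g_{j}\Vert_{\Bsp}<\infty$. To conclude $f\in\SO(\Rd)$ you only need $\mathcal{V}_{g_{0}}f\in\Wsp(\RR^{2d})$, not $\mathcal{V}_{g_{0}}f\in\Bsp(\RR^{2d})$. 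The bilinear map $(u,v)\mapsto\hat{u}\otimes v$ is bounded from $\Bsp(\Rd)\times\Bsp(\Rd)$ into $\Wsp(\RR^{2d})$ by (iv), (i) and the exact cross-norm identity $\Vert u\otimes v\Vert_{\Wsp(\RR^{2d})}=\Vert u\Vert_{\Wsp}\Vert v\Vert_{\Wsp}$ (immediate from the product structure of the tent BUPU). Hence $\sum_{j}\hat{f}_{j}\otimes g_{j}$ converges absolutely in $\Wsp(\RR^{2d})$, and since $\sum_{j}f_{j}\otimes g_{j}\to H$ uniformly and in the $\Lisp$-norm, the limit is the partial Fourier transform of $H$ in its first variable; up to the coordinate swap $(\omega,x)\mapsto(x,\omega)$, which preserves the $\Wsp(\RR^{2d})$-norm, this is $\mathcal{V}_{g_{0}}f$. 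Thus $\Vert\mathcal{V}_{g_{0}}f\Vert_{\Wsp(\Rtd)}\lesssim\Vert f\Vert_{\Bsp}\Vert g_{0}\Vert_{\Bsp}$, and Lemma~\ref{le:equiv-norm}(iii) gives $\Vert f\Vert_{\SO}\lesssim\Vert f\Vert_{\Bsp}$. Your stated obstruction---that $\Wsp$ is not Fourier invariant and that the partial transform cannot be generated from full transforms, dilations and modulations---is beside the point, because the partial transform acts factor-wise through the projective tensor decomposition, which is precisely what the equality in (v) supplies. As written, though, your proposal stops one step short and, like the paper itself, defers the decisive step to \cite{ja19}.
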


\section{The shortcut to distribution theory}

\label{sec:distributions}

In the previous sections we described several Banach spaces of continuous functions on $\Rd$ that have useful properties. Figure \ref{fig:dummy} gives a brief overview. Based on this, we recognize $\SO$ as a useful space of test-functions. It has all the properties that we wish for. We will consider its dual space $\SOPRdN$ as a suitably large reservoir of ``everything else'' that is worth to investigate. We call elements in $\SOPsp$ for \emph{distributions}.

The \emph{shortcut} to distribution theory is here the fact that we have established a useful Banach space as our space of test functions. Hence we do not require the more technical details that are typically needed to properly understand the Fr\'echet space formed by the Schwartz functions. Similarly, the dual space, here the Banach space $\SOPRd$ is also much more convenient that the space of tempered distributions (the dual of the Schwartz space). Ergo, with less mathematical effort we can describe and achieve much of the same type of results that the Schwartz space and the temperate distributions are typically used for.

One of the most important concepts of the dual space is that it is possible to extend operators that act on $\SO$ to operators that act on $\SOPsp$. In particular, the properties of $\SO$ allow us to define the Fourier transform of elements in $\SOPsp$ (this is also possible to do with $\WFW$ and $\WFW'$). Before we get to this, we need to introduce $\SOPsp$ properly.

The dual space   $\SOPRd$, consists of bounded, linear  functionals
$ \sigma : \SORd \to \bC$. It is a Banach  space with respect to the usual functional norm
\begin{equation} \label{SOPnorm1}
\Vert \sigma \Vert_{\SOPsp} = \sup_{{f\in \SORd, \, \, \Vert f \Vert_{\SO} = 1}} \vert \sigma(f) \vert.
\end{equation} 
This topology is often too strong. Another weaker, yet at least as natural topology on $\SOPsp$ is the topology it inherits from $\SO$: we say that a sequence $(\sigma_{n})$ in $\SOPRd$ converges in the \weaks topology towards $\sigma_{0}\in \SOPRd$ exactly if
\begin{equation} \label{wstconvdef1} 
 \lim_{n} \big\vert \big(\sigma_{n}-\sigma_{0}\big)(f)\big\vert = 0 \ \ \text{for all} \ \ f\in \SO(\Rd). 
 \end{equation} 

Now every $h\in\CbRd$ (and many more) defines a distribution $\sigma_{h}\in \SOPRd$ via the injective embedding operator
\begin{equation} \label{embeddfct} 
 \iota: \CbRd\to\SOPRd, \ \iota(k) = \sigma_{h} = f \mapsto \intRd f(t) \, h(t) \, dt.
\end{equation} 

Also, any  $\mu\in\Mbsp(\Rn)$ defines a distribution $\sigma_{\mu}\in\SOPRd$ by the rule
\[ \sigma_{\mu}(f) = \mu(f) \ \ \text{for all} \ \ f\in \SO(\Rd).\]
The mapping $\mu\mapsto\sigma_{\mu}$ provides a continuous  embedding $\Mbsp(\Rd)$ into $\SOPRd$.

\begin{definition} \label{def:SOp-extension}
Assume $T$ is a continuous operator from $\SORd$ into $\SORd$. We say that the operator $\widetilde{T}:\SOPRd\to\SOPRd$ is an extension of $T$ if the following holds,
\begin{enumerate}
    \item[(i)] $\widetilde{T}$ is \weaks-\weaks \ continuous,
    \item[(ii)] $\widetilde{T} \circ \iota (k) = \iota \circ T(k)$ (or, equivalently, $\widetilde{T} \sigma_{k} = \sigma_{\,T k}$\,) for all $k\in \SORd$.
\end{enumerate}
\end{definition}

\begin{lemma} \label{extendedOPS} 
The Fourier transform $\FT$, translation operator $T_{x}$, $x\in\Rd$, modulation operator $E_{\omega}$, $\omega\in\Rd$, and the coordinate transform $\alpha_{A}$, $A\in \textnormal{GL}_{d}(\RR)$ are extended from operators on $\SORd$ to operators on $\SOPsp(\Rd)$ in the following way: for any $f\in \SO(\Rd)$ and $\sigma\in\SOPsp(\Rd)$
\begin{align*}
 \widetilde{\FT} & : \SOPRd\to\SOPRd, \ \big(\widetilde{\FT}\sigma\big)(f) =  \sigma(\FT f), \\
\widetilde{T}_{x} & : \SOPRd\to\SOPRd, \ \big(\widetilde{T}_{x}\sigma\big)(f) = \sigma(T_{-x} f), \\
 \widetilde{E}_{\omega} & : \SOPRd\to\SOPRd, \ \big(\widetilde{E}_{\omega}\sigma\big)(f) =  \sigma(E_{\omega} f),\\
\widetilde{\alpha}_{A} & : \SOPRd\to\SOPRd, \ \big(\widetilde{\alpha}_{A}\sigma\big)(f) = \sigma(\alpha_{A^{-1}} f).
\end{align*}
\end{lemma}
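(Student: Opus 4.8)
The plan is to treat all four operators by a single device. For a bounded operator $T$ on $\SORd$ from our family I would first produce a second bounded operator $S$ on $\SORd$ — its \emph{transpose} with respect to the bilinear pairing $\langle f,k\rangle=\intRd f(t)\,k(t)\,dt$ — characterised by
\begin{equation*}
 \intRd (Sf)(t)\, k(t)\, dt = \intRd f(t)\, (Tk)(t)\, dt \quad \text{for all } f,k\in\SORd,
\end{equation*}
and then set $\widetilde T\sigma := \sigma\circ S$. Because $S$ is bounded on $\SORd$, for any $\sigma\in\SOPRd$ the map $f\mapsto\sigma(Sf)$ is linear with $\vert\sigma(Sf)\vert\le \Vert\sigma\Vert_{\SOPsp}\,\Vert S\Vert\,\Vert f\Vert_{\SO}$, so $\widetilde T\sigma\in\SOPRd$ and $\Vert\widetilde T\sigma\Vert_{\SOPsp}\le\Vert S\Vert\,\Vert\sigma\Vert_{\SOPsp}$; thus $\widetilde T$ is a well-defined bounded linear operator on $\SOPRd$. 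Granting the displayed identity, condition (ii) of Definition \ref{def:SOp-extension} is immediate: for $k\in\SORd$ one has $(\widetilde T\sigma_k)(f)=\sigma_k(Sf)=\intRd (Sf)\,k=\intRd f\,(Tk)=\sigma_{Tk}(f)$, i.e.\ $\widetilde T\sigma_k=\sigma_{Tk}$.

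The substance of the proof is therefore to identify $S$ in each case and to check the displayed duality identity. For $T=\FT$ the identity is exactly the fundamental identity of Fourier analysis \eqref{fund-Fourier0}, which forces $S=\FT$, and boundedness (indeed isometry) of $S$ on $\SORd$ is Proposition \ref{pr:SO-F-invariant}. For $T=T_{x}$ the substitution $s=t+x$ gives $\intRd f(t+x)\,k(t)\,dt=\intRd f(s)\,k(s-x)\,ds$, so $S=T_{-x}$. For $T=E_{\omega}$ the factor $e^{2\pi i\omega\cdot t}$ may be read on either side of the pairing, so $E_{\omega}$ is its own transpose and $S=E_{\omega}$. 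For $T=\alpha_{A}$ the change of variables $s=A^{-1}t$, with Jacobian $\vert\det A\vert$, turns $\intRd (\alpha_{A^{-1}}f)\cdot k$ into $\intRd f\cdot(\alpha_{A}k)$, giving $S=\alpha_{A^{-1}}$. In every case $S$ is again a member of the family and is bounded on $\SORd$ (isometric in the first three cases, and bounded for $\alpha_{A^{-1}}$) by the results already established, and $\widetilde T\sigma=\sigma\circ S$ reproduces exactly the formulas stated in the lemma.

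It remains to verify condition (i), \weaks-\weaks{} continuity, and this is automatic from the construction: if $\sigma_n\to\sigma_0$ in the \weaks{} sense, then for each fixed $f\in\SORd$ the element $Sf$ is a fixed test function, whence $(\widetilde T\sigma_n)(f)=\sigma_n(Sf)\to\sigma_0(Sf)=(\widetilde T\sigma_0)(f)$, so $\widetilde T\sigma_n\to\widetilde T\sigma_0$ in the \weaks{} sense. I expect the only genuine obstacle to be the Fourier case, where passing from $T=\FT$ to its transpose is not a change of variables but relies essentially on the fundamental identity \eqref{fund-Fourier0}; the translation, modulation and dilation cases are routine substitutions, and the continuity statement is essentially free once each extension is recognised as composition with a fixed bounded operator $S$ on $\SORd$.
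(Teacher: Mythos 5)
Your proof is correct and follows essentially the same route as the paper: weak$^{*}$-weak$^{*}$ continuity is immediate because each extension is composition with a fixed bounded operator on $\SORd$, and compatibility with the embedding $\iota$ reduces to a duality identity, which for the Fourier transform is precisely the fundamental identity \eqref{fund-Fourier0} and for the other operators is a change of variables. Your transpose formalism merely packages this uniformly, and you additionally carry out the translation, modulation and dilation cases that the paper leaves as ``proven in the same fashion.''
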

\begin{proof}
We only show the result for the Fourier transform. The statements for the other operators are proven in the same fashion.
We have to show that $\widetilde{F}$ satisfies Definition \ref{def:SOp-extension}. In order to show the \weaks-\weaks continuity, let $(\sigma_{n})$ be a sequence in $\SOPRd$ that converges in the \weaks-sense towards $\sigma_{0}$. We have to show that then also $\widetilde{\FT} \sigma_{n} \xrightarrow{\text{w}^{*}} \widetilde{\FT}\sigma_{0}$. This follows easily from the definition of $\widetilde{\FT}$,
\begin{align*}
    & \lim_{n} \big\vert \big( \widetilde{\FT} \sigma_{n} - \widetilde{\FT} \sigma_{0} \big)(f) \big\vert = \lim_{n} \big\vert \big( \widetilde{\FT} (\sigma_{n} - \sigma_{0}) \big)(f) \big\vert \\
    & = \lim_{n} \big\vert \big( \sigma_{n} - \sigma_{0} \big)(\FT f) \big\vert = 0,
\end{align*}
where the last equality follows by assumption. It remains to show that Definition \ref{def:SOp-extension}(ii) is satisfied. We observe that for all $f,k\in\SORd$
\begin{align*} & \big(\widetilde{\FT} \circ \iota (k)\big)(f) = \big(\iota(k)\big)(\FT f) = \intRd \hat{f}(t) \, k(t) \, dt \\
& \big( \iota\circ \FT(k)\big)(f) = \intRd f(t) \, \hat k(t) \, dt.
\end{align*}
It follows from \eqref{fund-Fourier0} that the latter two integrals are the same, so that $\widetilde{\FT} \circ \iota (k) = \iota\circ \FT(k)$, as desired.
\end{proof}

Consider the Dirac delta,
\[ \delta_{x} : \SORd\to \bC, \quad \delta_{x} (f) = f(x), \ x\in\Rd,\]
It is 
easy to show that 
$\widehat{\delta_{x}} = \widetilde{\FT}\delta_{x} $ is the distribution given by
    \[ \widetilde{\FT}\delta_{x} : \SO(\Rd) \to \bC, \quad  \widetilde{\FT}\delta_{x}(f) = \hat f(x) = \intRd f(t) \, e^{-2\pi i x \cdot t} \, dt.\]
    Or, equivalently, $\widetilde{\FT}\delta_{x} = \iota({e_{x}})$, where $e_{x}\in\CbRd$ is given by $e_{x}(t) = e^{-2\pi i x\cdot t}$.
    This can be formulated as to say that ``the Fourier transform of the Dirac delta distribution at $x$, $\delta_{x}$, is the function $e_{x}(t) = e^{-2\pi i t \cdot x}$''. Or, equivalently, ``the Fourier transform of the function $e_{x}(t) = e^{2\pi i t\cdot x}$, $t\in \Rd$, is the Dirac delta distribution at $x$, $\delta_{x}$''.
\begin{remark} 
This is the characteristic property of the Fourier transform: it maps pure frequencies into Dirac measures and vice versa (see \cite{prve08}, (4.36)).
\end{remark}

Consider now the \emph{Dirac comb} or \emph{Shah distribution} for a given invertible $d\times d$ matrix $A$, it is the element of $\SOPRd$ defined by
\[ \Shah_{A} : \SORd \to \bC, \ \Shah_{A}(f) = \sum_{k\in\bZ^{d}} f(Ak).\]
By definition of $\widetilde{\FT}$ and a use of the Poisson summation formula \eqref{eq:poisson-sum}, one gets 
    \[ \widetilde{\FT}(\Shah_{A}) = \vert \det(A) \vert^{-1} \, \Shah_{A^{\dagger}}.\]
 
We define {\it multiplication} and {\it convolution} of a distribution $\sigma\in\SOPRd$ {\it with a test function} $g\in\SORd$ to be the distribution $\sigma \in \SOPRd$ defined as follows:
\begin{definition} \label{convtestmult}  
\begin{align*} 
& \big( \sigma*g\big)(f) =  \sigma(g^{\checkmark}\!*f) \ \ \text{and} \ \  \big( \sigma \cdot g\big)(f) = \sigma(g\cdot f) \ \ 
\quad\ f\in \SORd.
\end{align*}
\end{definition} 
The definition of the convolution is consistent with the definition 
\[ (\sigma * g)(t) = \sigma(T_{t}g^{\checkmark}), \ \ t\in\Rd.\]
Consequently we have $\SORd \ast \SOPRd \subset \CbRd$,
viewed as a subspace of $\SOPRd$. 
Observe that $\Shah_{A} * g$ equals the $A$-period function in $\CbRd$ given by
\[ \big(\Shah_{A} * g\big)(t) = \sum_{k\in\bZ^{d}} g(t+Ak), \ \ t\in \Rd,\]
where the convergence of the series is uniform and
absolute within $\CbRdN$. 
Furthermore, one can show that 
\begin{equation} \label{eq:2009b} \widetilde{\FT}(\sigma*g) = (\widetilde{\FT} \sigma) \cdot ({\FT} g), \ \ \widetilde{\FT}(\sigma \cdot g) = \widetilde{\FT}\sigma * {\FT}g.\end{equation}
We shall use these relations in Section \ref{sec:shannon}, where we take a look at the Shannon sampling theorem.

\noindent\textit{Proof of \eqref{eq:2009b}.} This follows by the definition of the extended Fourier transform and the convolution theorem: for any $\sigma\in\SOPRd$ and $g,f\in \SORd$
\begin{align*}
    & \big(\widetilde{\FT}[\sigma*g]\big)(f) = (\sigma*g)(\FT f) = \sigma(g^{\checkmark}\!*\FT f) \\
    & = \sigma\big([\FT\IFT g^{\checkmark}]\!*\FT f\big) = \sigma\big(\FT [\IFT g^{\checkmark}\!\cdot f]\big) \\
    & = \widetilde{\FT}\sigma(\FT g \cdot f) = \big(\widetilde{\FT}\sigma \cdot \FT g \big) (f).
\end{align*}
The proof of the other equality is done in the same spirit.

\section{The Kernel Theorem} 
\label{sec:kernel}
The reason why $\WFW(\Rd)$ is not quite good enough to be our Banach space of test functions, is that it does not allow for the formulation of a kernel theorem. For this we have to turn to $\SORd$.

The kernel theorem is the continuous analogue of the matrix representation for linear mappings from $\Rst^n$ to $\Rst^m$, showing that they are represented in a unique way through matrix multiplication. Recalling that such a linear mapping $T$ takes the form $T(\xx) = \mathbf{A} \cdot \xx$
for a column vector $\xx \in \Rst^n$ (matrix-vector multiplication), where the columns $(a_k)\kin$ 
are just the images of the unit vectors $(\ee_k)\kin$ in $\Rst^n$ we find that with the usual convention of using indices describing row and column positions of the entries of a matrix 
we have $a_{j,k} = \langle T(\ee_j), \ee_k \rangle_{\Rst^m}$, with $1\leq j \leq n$ and
$ 1 \leq k \leq m$. 

Even by replacing the unit vectors by Dirac measures one cannot hope to get a ``continuous matrix representation'', resp.\ a description of any given operator (say on $\LtRdN$) as an integral operator, because for example multiplication operators cannot have non-zero contributions outside the main diagonal. But we can formulate (in analogy with the Schwartz Kernel Theorem for tempered distributions) a kernel theorem for $\SO$:
\begin{theorem} \label{kernelFei1}
\begin{enumerate}
\item[(i)] The Banach space of operators  $\mathcal{L}(\SORd,\SOPRd)$ can be identified with 
the space $\SOPRtd$. Specifically, to each operator $T$ there corresponds a unique distribution $K \in \SOPRtd$ such that
\begin{equation} \label{kernelchar}
\big(Tf\big)(g) = K (  f \tensor g) \ \ \text{for all} \ \ f,g\in\SORd.
\end{equation} 
\item[(ii)] The Banach space of operators $\mathcal{L}_{w^{*}}(\SOPRd,\SORd)$ that map weak$^{*}$ convergent sequences in $\SOPRd$ into norm convergent sequences in $\SORd$ can be identified with the space $\SORtd$. Specifically, to each operator $T$ there corresponds a unique function $K\in \SORtd$ such that
\begin{equation} \label{SOkernel} 
 \big(T \sigma\big)(x) =  \intRd K(x,y) \, dy \ \ \text{for all} \ \  \sigma \in \SOPRd, \ x \in \Rdst.
 \end{equation} 
Moreover, one has $K(x,y) =  (T\delta_y)(x) = \delta_x(T(\delta_y))$
for all $x,y \in \Rdst$. 
\end{enumerate}
\end{theorem}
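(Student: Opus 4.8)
The plan is to derive both parts from a single structural input, the tensor factorization property $\SO(\Rd)\,\widehat{\otimes}\,\SO(\Rd)=\SORtd$ (item (vi) of the preceding theorem), combined with the classical Banach-space duality relating operators to the projective tensor product. Part (i) is the ``distributional'' kernel theorem and is obtained purely formally; part (ii) is its predual counterpart and is where the real work lies.

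For (i), I would first recall the standard isometric identification, valid for any Banach spaces $X,Y$,
\[
\mathcal{L}(X,Y')\cong (X\,\widehat{\otimes}\,Y)',
\]
realised by sending an operator $T$ to the functional $K_{T}$ determined on elementary tensors by $K_{T}(x\otimes y)=(Tx)(y)$. This uses twice the universal property of $\widehat{\otimes}$: bounded bilinear forms $B:X\times Y\to\bC$ correspond isometrically to elements of $(X\,\widehat{\otimes}\,Y)'$, and such forms correspond isometrically to operators $T\in\mathcal{L}(X,Y')$ via $B(x,y)=(Tx)(y)$, the bound $|(Tx)(y)|\le \Vert T\Vert\,\Vert x\Vert\,\Vert y\Vert$ being immediate. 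Specialising to $X=Y=\SORd$ and inserting the tensor factorization $\SORd\,\widehat{\otimes}\,\SORd=\SORtd$ (an isomorphism of Banach spaces, with equivalent norms) yields $\mathcal{L}(\SORd,\SOPRd)\cong(\SORtd)'=\SOPRtd$, which is precisely the asserted correspondence $T\leftrightarrow K$ with $(Tf)(g)=K(f\otimes g)$ as in \eqref{kernelchar}. Uniqueness of $K$ is the injectivity built into this isomorphism, so the only nontrivial ingredient is the tensor factorization property itself, which I may assume.

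For (ii), I would build the correspondence in the reverse, predual, direction. Writing a kernel $K\in\SORtd=\SORd\,\widehat{\otimes}\,\SORd$ as an absolutely convergent series $K=\sum_{j}f_{j}\otimes g_{j}$ with $\sum_{j}\Vert f_{j}\Vert_{\SO}\,\Vert g_{j}\Vert_{\SO}<\infty$, I define $T_{K}\sigma=\sum_{j}\sigma(g_{j})\,f_{j}$, equivalently $(T_{K}\sigma)(x)=\sigma\big(K(x,\cdot)\big)$ with $\sigma$ acting in the second variable, which realises the action in \eqref{SOkernel}. The estimate $\Vert T_{K}\sigma\Vert_{\SO}\le\Vert\sigma\Vert_{\SOPsp}\sum_{j}\Vert f_{j}\Vert_{\SO}\Vert g_{j}\Vert_{\SO}$ shows $T_{K}\in\mathcal{L}(\SOPRd,\SORd)$ with $\Vert T_{K}\Vert\le\Vert K\Vert_{\SO}$; and if $\sigma_{n}\to\sigma_{0}$ in the \weaks-sense, then $\sigma_{n}(g_{j})\to\sigma_{0}(g_{j})$ for each $j$ while $\sup_{n}\Vert\sigma_{n}\Vert_{\SOPsp}<\infty$ by Banach--Steinhaus, so dominated convergence applied to the series gives $\Vert T_{K}\sigma_{n}-T_{K}\sigma_{0}\Vert_{\SO}\to 0$; hence $T_{K}\in\mathcal{L}_{w^{*}}(\SOPRd,\SORd)$. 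Testing on Dirac functionals yields $K(x,y)=(T_{K}\delta_{y})(x)=\delta_{x}(T_{K}\delta_{y})$, which both identifies the kernel and gives injectivity of $K\mapsto T_{K}$.

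It remains to prove surjectivity, and this is the step I expect to be the \emph{main obstacle}. Given $T\in\mathcal{L}_{w^{*}}(\SOPRd,\SORd)$ I would set $K(x,y):=(T\delta_{y})(x)=\delta_{x}(T\delta_{y})$. That $T=T_{K}$ is comparatively soft: finite linear combinations of Dirac functionals are \weaks-dense in $\SOPRd$, and since $T$ and $T_{K}$ agree on each $\delta_{y}$ and are both \weaks-to-norm continuous, they coincide everywhere. The genuine difficulty is to show that this $K$ lies in $\SORtd$ and not merely in the distributional kernel space $\SOPRtd$ furnished by part (i); this is exactly where \weaks-to-norm continuity, rather than plain boundedness, must be used, and it is what distinguishes (ii) from (i). I would establish $K\in\SORtd$ either by the duality route, identifying $\mathcal{L}_{w^{*}}(\SOPRd,\SORd)$ with the predual of $\mathcal{L}(\SORd,\SOPRd)=\SOPRtd$ so that it is isometrically $\SORtd$, or directly by estimating the short-time Fourier transform $\mathcal{V}_{g_{0}\otimes g_{0}}K$ on $\Rst^{2d}$ through the action of $T$ on time-frequency shifted Diracs together with the uniform bound $\Vert T\delta_{y}\Vert_{\SO}\le\Vert T\Vert$. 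The reconstruction formula $K(x,y)=(T\delta_{y})(x)$ recorded in the statement is then precisely what closes the argument.
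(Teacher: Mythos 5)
First, a point of orientation: the paper does not prove Theorem~\ref{kernelFei1} at all --- it defers entirely to \cite{feja18} --- so your proposal can only be measured against what a complete argument must contain. Your part (i) is correct and is the standard route: the canonical isometry $\mathcal{L}(X,Y')\cong (X\,\widehat{\otimes}\,Y)'$ combined with the tensor factorization $\SORd\,\widehat{\otimes}\,\SORd=\SORtd$ from item (vi) of the preceding theorem gives the correspondence $(Tf)(g)=K(f\otimes g)$ together with uniqueness, and nothing more is needed. (You might also note in passing that \eqref{SOkernel} as printed cannot be literally correct, since $\sigma$ does not appear on the right-hand side; your reading $(T\sigma)(x)=\sigma\big(K(x,\cdot)\big)$ is the intended one.)

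Part (ii), however, contains a genuine gap, and you have located it yourself without closing it: surjectivity, i.e.\ the claim that for $T\in\mathcal{L}_{w^{*}}(\SOPRd,\SORd)$ the function $K(x,y)=(T\delta_y)(x)$ actually belongs to $\SORtd$ and not merely to the distributional kernel space $\SOPRtd$ supplied by part (i). This is the entire content of (ii) --- it is precisely the ``inner kernel theorem'' of \cite{feja18} --- and neither of your two proposed routes is executed. The duality route is not a formality: the canonical map $X\,\widehat{\otimes}\,Y\to\mathcal{L}_{w^{*}}(X',Y)$ is injective and has the right range only when an approximation property is available (it holds here because $\SORd$ is Banach-space isomorphic to $\lisp$, but that input must be invoked), and identifying $\mathcal{L}_{w^{*}}(\SOPRd,\SORd)$ as ``the predual of $\mathcal{L}(\SORd,\SOPRd)$'' is an assertion, not an argument. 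The direct route via estimating $\mathcal{V}_{g_0\otimes g_0}K$ also does not follow from the uniform bound $\Vert T\delta_y\Vert_{\SO}\le \Vert T\Vert\,\Vert\delta_y\Vert_{\SOPsp}$ alone; that bound only controls a mixed sup--$\SO$ norm of $K$, whereas membership in $\SORtd$ requires summability in \emph{both} variables jointly, which is exactly where the strengthened continuity hypothesis must be converted into a quantitative estimate. A smaller but real point in the soft half of your argument: to conclude $T=T_K$ from agreement on Dirac functionals you need every $\sigma\in\SOPRd$ to be the weak$^{*}$ limit of a \emph{bounded sequence} of finite Dirac combinations (obtainable by BUPU discretization, using separability of $\SORd$), since $\mathcal{L}_{w^{*}}$ is defined via sequential continuity; weak$^{*}$ density of the span by itself is not enough. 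As it stands, your write-up is a correct and well-organised skeleton for (ii) with its hardest bone missing.
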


Note that the Hilbert space $\LtRtd$ satisfies
 $\SORtd \hookrightarrow \LtRtd \hookrightarrow  \SOPRtd$   and by the classical characterization of Hilbert-Schmidt   operators on $\LtRtd$  this is an intermediate version  of the kernel theorem. Recall that Hilbert-Schmidt 
 operators are compact operators, and form a Hilbert  space with respect to the sesquilinear form  $$  \langle S,T \rangle_{\HS} :=  \trace(S \ast T^*) $$
and the identification is even unitary at this level. For a proof of Theorem 8 we refer to \cite{feja18}.

What we can see from Theorem \ref{kernelFei1}(ii), in the case of ``regularizing operators'', is that they behave very much like matrices, just with continuous entries. This is quite useful for various reasons. It allows to assign (also in the context of $\SOsp$ and $\SOPsp$) to each operator a Kohn-Nirenberg symbol or (via an additional symplectic Fourier transform) a so-called {\it spreading symbol}. These alternative representations are on $\SOPsp(\TFd)$ 
or $\SOsp(\TFd)$ respectively if and only if the
corresponding kernels are in this space. Again those isomorphisms can be seen as extensions resp.\
restrictions of the Hilbert (Schmidt) case, but we will not have space to discuss this at length here (see \cite{cofelu08}). 

But we would like to point at least to the natural composition law for regularizing operators.
Assume that we have two operators $T_1$ and $T_2$
with kernels in $\SORtd$, denoted by $K_1$ and $K_2$. Clearly the composition $T_2 \circ T_1$ of
these operators belongs again to the operator space
$\LSOPwSO$ and therefore has a kernel $K \in \SORtd$. Not very surprising one can show (easily) that one has: 
\begin{equation} \label{composkernel12} 
K(x,z) = \intRd K_2(x,y)  K_1(y,z) dy, \quad
x,z \in \Rdst.
\end{equation} 

When we want to compose two operators with more general kernels, let us assume that now $T_1,T_2$
are just bounded operators on  $\LtRd$, so 
they belong to $\LLt \subset \LSOSOP$, then
they might not have a representation by
kernels in $\SOsp$ in general and the question is how to ``compose'' the kernels. 
For such cases formula \eqref{composkernel12}  
above cannot be applied directly, but it is possible to combine this with regularization operators to ensure that the actual composition is performed on ``nice kernels''. Of course one takes limits after the composition and reaches in this way better and better approximation (in the $\wst$-sense) to the kernel of the composed mapping\footnote{This is comparable with the multiplication of real numbers which is defined as the limit of products of decimal approximations of the involved real numbers, and taking limits afterwards!}. 

When applied to the Fourier transform with the
continuous, bounded and smooth kernel 
$K_1(s,y) =  e^{- 2 \pi i s y} $ and the 
inverse Fourier transform with kernel 
$K_2(s,x) =  e^{2 \pi i x s} $ one can see that
the resulting operator is the identity operator which can be described by the distribution
$\delta_{\Delta}(F) = \intRd  F(x,x) dx$, for 
$F \in \SORtd$, which should be seen as the
continuous analogue of the Kronecker delta-symbol. 
Viewed rowwise (in the continuous sense) the 
entry is just $\delta_x$ at level $x$, or in other words $T(f)(x) = \delta_x(f) = f(x)$, known as the {\it sifting property } of the {\it Dirac delta} (see for example
\cite{prve08}, or \cite{br86-2}). 

Taking the naive approach and computing 
\ref{composkernel12} for the Fourier kernels and
then applying the exponential law results in the (mathematically strange, but often used by engineers) formula
\begin{equation}  \label{invFourone}
   \intinf  e^{-2 \pi i s t} ds =  \delta(t). 
\end{equation}
Such an integral should of course not be 
viewed as an effective integral, but rather
a rule at the level of symbols which is
equivalent to the (independently verifyable
fact) that $\FT \inv \circ \FT = Id$, e.g.
as operators on $\SORd$ (using true integrals)
or in the spirit of Plancherel's Theorem (by taking limits). 

The setting in Theorem \ref{kernelFei1}(i) is general enough to be applied to many of the
operators arising elsewhere, e.g.\ bounded on any of the
space $\LpRdN$ or even from $\LpRdN$ to some other $\LqRdN$,
for $1 \leq p,q \leq \infty$, because one has
$\SORd \subset \LpRd  \subset \SOPRd$ (with continuous embeddings), for
$p,q \in [1,\infty].$ The book of R.~Larsen (\cite{la71}) describes
such operators as convolution operators 
by suitable quasi-measures. These {\it quasi-measures} (introduced by G. Gaudry, \cite{ga66}) are more general
than the elements of $\SOPRd$ and can only be convolved with
compactly supported functions in the Fourier algebra, i.e.\
the elements of the pre-dual. Moreover, unlike elements of $\SOPsp(\Rd)$
it is not possible to define a Fourier transform, resp.\ a 
corresponding transfer function in the natural way. 
Note however that operators with a kernel in $\SOPsp$ do {\it not form an algebra}, 
because the
range of the space may be larger than the domain. On the other hand, for operators mapping a given space into itself (e.g.\ $\LtRdN$, or even $\SORdN$, etc.) composition is  possible and then it should be true (and can be verified) that the convolution of the corresponding kernels ``somehow makes sense'' (using regularizers) or 
equivalently,
the pointwise product of the associated transfer functions will
be also meaningful (e.g. via pointwise a.e. multipication in $\LinfRd$). 

The kernel theorem is the starting point for many 
alternative descriptions of linear operators, more or less by
a ``change of basis''. One can view the space $\SOPRtd$ as a
(huge) space of operators, which contains a number of interesting
operators, such as the collection of all the TF-shifts
$\pi(\lambda) = E_s T_x, \,\, x,s \in \Rdst.$ 
The so-called spreading representation of the operators 
is a kind of ``Fourier-like'' representation of operators,
where these TF-shifts play the role of the Fourier basis 
for the continuous Fourier transform. This representation
will be called the {\it spreading representation} of operators. For more on this see, e.g., \cite{doto07-1} and \cite{feko98}.

\section{Shannon's Sampling Theorem}
\label{sec:shannon}

The  claim of the classical Whittaker-Kotelnikov-Shannon
Theorem  concerns the recovery of any $\LtR$-function whose a Fourier transform
whose support is contained in the symmetric interval $I = [-1/2,1/2]$ around zero (i.e.\ $\supp(\hat f) \subseteq I$)  from regular
samples of the form $(f(\alpha n))_{n \in \Zst}$ as long as $\alpha \leq 1$ (Nyquist rate).  

The reconstruction can be achieved using the $\SINC$-function, with $\SINC(t) = sin(\pi t)/{\pi t}$, the {\it sinus cardinales}
\footnote{The word ``cardinal'' comes into the picture because of the {\it Lagrange type}
interpolation property of the function $\SINC$:  $\SINC(k)=\delta_{k,0}$.},
which can be characterized as the inverse Fourier transform of the
box-function ${\mathds{1}}_I$, the indicator function of $I$.

It is convenient to apply the following notation:
\begin{equation}\label{BOmega}
\Bsp^2_I := \{ f \, : \, f \in \LtR , \,  \supp(\hat f) \subseteq I \}. 
 \end{equation}
The Sampling theorem can be deduced as follows:
By the usual Fourier series, we know that the functions $(e_{k})_{k\in\bZ}=(e^{2\pi i k s})_{k\in\bZ}$ form an complete orthonormal basis in the Hilbert space $\Ltsp([0,1])$,
resp.\ the space of all  functions from
$\LtR$ with  $\supp(\hat{f})\subseteq I$. Therefore
using the standard inner product  $\langle \cdot , \cdot\rangle$ on $\Lsp^{2}(I)$ we obtain: 
\begin{align*} \hat{f}(s) & = \sum_{k\in\bZ} \langle \hat{f},e_{k} \rangle e_{k}(s) = \sum_{k\in\bZ} \langle \hat{f},e_{k} \rangle e^{2\pi i k s} \mathds{1}_{I}(s). \end{align*}
By  applying the inverse Fourier transform  we obtain
\begin{equation} \label{eq:shannon1} f(t) = \sum_{k\in\bZ} \langle \hat{f},e_{k} \rangle \sinc(t+k), \end{equation}
\begin{align*} \mbox{with} \quad \langle \hat{f},e_{k} \rangle & = \int_{I} \hat{f}(s) \, e^{-2\pi i k s} \, ds = \int_{\RR} \hat{f}(s) \, e^{-2\pi i k s} \, ds = f(-k). \end{align*}
Plugging this into \eqref{eq:shannon1} yields the classical version of the Shannon theorem: 
\begin{equation}  \label{ShannLtv}  f(t) = \sum_{k\in\bZ} f(k) \, \sinc(t-k) \ \ \text{for all} \ \ t\in\RR \ \ \text{and} \ \ f\in B_{I}^{2}.
\end{equation} 

Thanks to the fact that the sampling values are in $\ltsp(\Zst)$ the series is pointwise absolutely convergent, even uniformly, but it is also unconditionally convergent in $\LtRN$. Unfortunately the partial sums are {\it not well localized} due to the poor decay of the $\SINC$-function (which is  in $\LtR$, but not in $\LiR$ or $\SO(\RR)$).  

Consequently one prefers to make use of alternative building blocks at the cost of working at a slight oversampling rate.\footnote{Recall that digital audio recordings are meant to capture all the frequencies up to $20$ kHz and  work with $44100$ samples per second, although the abstract Nyquist criterion would only ask for $2 * 20000 = 40000$ samples per second (to express the Nyquist criterion in a practical form). Clearly the use of this theorem in  a real-time situation requires the reconstruction being well localized in time, in order to cause only minimal delay of the reconstruction process.} 
Let us formulate this more practical version of the Shannon sampling for bandlimited functions in the Wiener algebra. 

For any interval $I\subset \RR$ we set 
$ B^1_I := \{ f \in \Wsp(\Rst) \, : \, \supp(\hatf) \subset I \}.$
One can show that 
$  B^{1}_{I} = \{ f \in \SO(\Rst) \, : \,  \supp(\hatf) \subset I \} = \{ f \in \LiR \, : \,  \supp(\hatf) \subset I \}.$ 
The more practical version of Shannon's Sampling Theorem, now with good localization of the building blocks (rather than the $\sinc$-function) reads as follows.

\begin{theorem}
Let $\beta>0$ be such that $I \subset \tfrac{1}{2}(-\beta,\beta)$ and let $g\in\SOR$ be such that $\hat{g}(s)=1$ for all $s\in I$ and $\supp\,\hat{g} \subset \tfrac{1}{2}[-\beta,\beta]$ and let $\alpha = \beta^{-1}$.  
Then we have
\begin{equation}\label{ShannonWR1}
  f(t) = \alpha \, \sum_{k \in \Zst}  f(\alpha k) g(t-\alpha k)
\ \ \text{for all} \ \ t\in\RR, \quad \forall f\in B^{1}_{I}, 
\end{equation}
with absolute convergence in $\SORN$, $\WRN$, and $\CORN$.
\end{theorem}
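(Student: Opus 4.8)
The plan is to denote the right-hand side by $F := \alpha\sum_{k\in\Zst} f(\alpha k)\,T_{\alpha k} g$ and to prove two things separately: that this series converges absolutely in $\SORN$ (and hence, via the continuous embeddings $\SO(\RR)\hookrightarrow\WR\hookrightarrow\COsp(\Rst)$ recorded in Lemma~\ref{le:equiv-norm} and Lemma~\ref{le:1109a}, also in $\WRN$ and $\CORN$), so that $F$ is a well-defined element of $\SO(\RR)\subset\WR$; and that $F=f$. Since the Fourier transform is injective on $\WR$ by the Riemann–Lebesgue Lemma, \eqref{FourInfEst}, the identity $F=f$ reduces to showing $\hat F=\hat f$; as both $F$ and $f$ lie in $\COsp(\Rst)$ the resulting equality is genuinely pointwise, matching the ``for all $t\in\RR$'' claim.

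For the convergence I would first note that $f\in B^{1}_{I}\subset\SO(\RR)$, so by dilation-invariance of $\SO$ together with boundedness of the sampling map $\SO(\RR)\to\ell^{1}(\Zst)$ (both from the mapping-properties theorem for $\SO$), the sampled sequence $(f(\alpha k))_{k}$ lies in $\ell^{1}(\Zst)$. Combining this with the isometry of translation on $\SO$, \eqref{eq:TFshift-sO-norm}, gives
\[ \sum_{k\in\Zst}\bigl\|\alpha f(\alpha k)\,T_{\alpha k} g\bigr\|_{\SO} = \alpha\,\|g\|_{\SO}\sum_{k\in\Zst}|f(\alpha k)| < \infty, \]
which is the required absolute convergence.

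For the identity I would compute $\hat F$. Because the series converges absolutely in $\WR$ and $\FT\colon\WR\to\COsp(\Rst)$ is bounded, I may transform termwise; using $\widehat{T_{\alpha k}g}=E_{-\alpha k}\hat g$ one obtains, for every $s\in\RR$, $\hat F(s)=\alpha\,\hat g(s)\sum_{k}f(\alpha k)e^{-2\pi i\alpha k s}$, the inner sum converging since $(f(\alpha k))\in\ell^{1}$. Applying the Poisson summation formula \eqref{eq:poisson-sum} (equivalently \eqref{eq:poisson-sum2} with $x=0$, $\omega=-s$, $A=\alpha$) to the function $E_{-s}f\in\SO(\RR)$ yields $\sum_{k}f(\alpha k)e^{-2\pi i\alpha k s}=\beta\sum_{k}\hat f(s+\beta k)$, so that, since $\alpha\beta=1$,
\[ \hat F(s)=\hat g(s)\sum_{k\in\Zst}\hat f(s+\beta k); \]
that is, $\hat F$ is $\hat g$ times the $\beta$-periodization of $\hat f$.

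The final step is the support argument, where the hypotheses on $\beta$ and $g$ enter. Because $\supp\hat f\subset I\subset\tfrac12(-\beta,\beta)$, the translates $\hat f(\cdot+\beta k)$ have pairwise disjoint supports, so on $\tfrac12[-\beta,\beta]$ the periodization collapses to its $k=0$ term $\hat f$; as $\supp\hat g\subset\tfrac12[-\beta,\beta]$, replacing the periodization by $\hat f$ leaves the product unchanged, giving $\hat F=\hat g\cdot\hat f$. Finally $\hat g\equiv 1$ on $I\supseteq\supp\hat f$ forces $\hat g\cdot\hat f=\hat f$, hence $\hat F=\hat f$ and $F=f$. I expect this support bookkeeping — keeping straight that one needs \emph{both} the disjointness of the periodized translates (to discard $k\ne 0$) and the normalization $\hat g\equiv 1$ on $\supp\hat f$ (to remove the cutoff) — to be the only delicate point; all else is an assembly of the convergence estimate, Poisson's formula, and injectivity of $\FT$. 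An equivalent and perhaps more transparent route writes the right-hand side as $\alpha\,(\Shah_\alpha\cdot f)*g$ and applies the distributional identities in \eqref{eq:2009b} together with $\widetilde{\FT}(\Shah_\alpha)=\beta\,\Shah_{\beta}$ from Section~\ref{sec:distributions}, reproducing the same expression $(\Shah_\beta*\hat f)\cdot\hat g$.
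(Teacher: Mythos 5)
Your proof is correct, and it runs in the opposite direction from the paper's. The paper starts from $f$: it writes the Fourier--side identity $(\Shah_{\beta}*\hat f)\cdot\hat g=\hat f$ (the same disjoint--support--plus--$\hat g\equiv 1$ observation that you make), applies the \emph{distributional} inverse Fourier transform using \eqref{eq:2009b} and $\widetilde{\FT}(\Shah_{A})=\vert\det A\vert^{-1}\Shah_{A^{\dagger}}$ to obtain $f=\alpha\,(\Shah_{\alpha}\cdot f)*g$, and then unwinds $(\Shah_{\alpha}\cdot f)*g$ pointwise into the sampling series --- exactly the ``equivalent route'' you sketch in your last sentence. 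You instead start from the series $F$, establish its absolute convergence in $\SORN$ (via $\ell^{1}$-boundedness of sampling on $\SO$ and isometry of translation), compute $\hat F$ termwise, replace the exponential sum by the $\beta$-periodization of $\hat f$ through the classical Poisson summation formula \eqref{eq:poisson-sum}, and conclude $F=f$ from injectivity of $\FT$ on $\WR$. The support bookkeeping in the middle is identical in both arguments. What your version buys: it actually proves the convergence assertions, which are part of the statement but are passed over in silence in the paper's proof, and it stays entirely within the classical (function-level) Fourier transform and Poisson formula, never invoking the extended transform on $\SOPRd$. What the paper's version buys: it is shorter, and it deliberately exercises the distributional calculus of Section \ref{sec:distributions}, which is the didactic point of the section. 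One small remark: when you invoke Poisson summation for $E_{-s}f$ you are implicitly using that $E_{-s}f\in\WFW(\Rst)$; this is immediate from $f\in B^{1}_{I}\subset\SO(\Rst)\subset\WFW(\Rst)$ and modulation invariance, but it is worth saying explicitly since \eqref{eq:poisson-sum} is stated only for $\WFW$.
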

It is even possible to require that $g$ has decay like the inverse of 
any given polynomial: given $r \in \Nst$ one can find $g$ such that
$ |g(t)| \leq C (1+|t|)^{-r} $ for a suitable constant $C > 0$. The spectrum
of $g$ is contained in a small open interval around $I$.  

\begin{proof}
The assumption about  $\supp(\hatf) \subset I$ implies that the support of all the shifted copies of $\hatf$, are disjoint to $I$ and even
 to the open interval $( -\beta/2, \beta/2)$. Hence for any (ideally
 smooth) function $g$ as in the theorem  satisfies 
 \begin{equation} \label{ShannonId2} 
  (\Shah_{\beta}*\hat{f} ) \cdot \hat{g} = \hat{f} .
 \end{equation} 
By applying the inverse Fourier transform  we find
\begin{equation}  \label{ShannonId3} 
 f = \alpha \cdot (\Shah_{\alpha} \cdot f) * g
 \end{equation} 
 That is, we reach our goal as follows:
 \begin{align*}
     f(t) & = \big(\alpha \cdot  (\Shah_{\alpha} \cdot f) * g \big)(t)  = \alpha \cdot \big(\Shah_{\alpha} \cdot f\big)(T_{t}g^{\checkmark}) \\
     & = \alpha \cdot \big(\Shah_{\alpha} \big)(f\cdot T_{t}g^{\checkmark})  = \alpha \sum_{k\in\bZ} (f\cdot T_{t}g^{\checkmark})(\alpha \, k) \\
     & = \alpha \sum_{k\in\bZ} f(\alpha \, k) \, g(t-\alpha k).
 \end{align*} 
\end{proof} 

\section{Systems and Convolution Operators}
\label{sec:TILS}

The theory of TILS ({\it translation invariant linear systems}) is an important
subject and most electrical engineering students are exposed to this concept early
on in their studies. Unfortunately one must say that -- due to the lack of appropriate
mathematical descriptions -- the way in which the concepts of an impulse response
respectively a transfer function are introduced only in a rather vague
(but ``intuitive") fashion.
Furthermore, students who want to dig deeper and understand these concepts in more
detail are left alone, because engineering books explaining the relevance
of the subject do not provide more details or justifications later on.
On the other hand the mathematical books who talk about convolution 
do this with a completely different motivation but do not connect to those
problems arising in the engineering context.

The article \cite{fe16} takes the first steps towards a reconciliation of
these two approaches\footnote{But still much more has to be done!} by modelling
translation invariant systems of what is called BIBOS systems (which means
bounded input - bounded output), resp.\ as bounded linear operator
from the Banach space $\CORdN$ into itself, commuting with translations.

By choosing as a domain the space  $\CORd$
and {\it not} the larger space $\CbRd$
of all bounded, continuous, complex-valued functions we avoid indeed
the so-called {\it scandal} in system theory as diagnosed by I.~Sandberg
in a series of paper (see e.g. \cite{sa98,sa01-3,sa04-1,sa05}). Furthermore, we are
in fact able to represent every such system as a convolution operator
by some {\it bounded measure}. In order to do so it is not at all required
to discuss technical details of measure theory, but one can just
{\it call}\footnote{this is well justified by the  Riesz representation
theorem.} the bounded (resp.\ continuous) linear functionals on $\CORdN$
bounded measures (as we also did in Section \ref{sec:CO}).

Unfortunately this setting cannot be used to characterize all the
TILS which are bounded on  $\LtRdN$. 
It is true that every convolution operator of the form
$f \mapsto \mu \ast f, \, f \in \LtRd$ with $\mu \in \MbRd$ extends
to all of $\LtRd$ and satisfies the expected estimate:
$\| \mu \ast f\|_2 \leq \|\mu\|_\MbRd \|f\|_2$,
or alternatively can be described on the Fourier transform side
as $\hat{f} \mapsto  \hat{\mu} \cdot \hat{f}$, where $\hat{\mu} \in \CbRd$,
but not every $\Ltsp$-TILS can be represented in this form.

It is not so difficult to find out (using Plancherel's Theorem) that
the most general TILS on $\LtRdN$ is a pointwise multiplier with
an essentially bounded and measurable function, resp.\ with
some $h \in \LinfRd$. So we can write any such operator in the
form $ f \mapsto T(f) = \FT \inv (h \cdot \hat{f})$, with transfer ``function''
$h \in \LinfRd$. But then one would expect that we can write
$ T(f) = \sigma \ast f$, where $\sigma = \FT \inv (h)$, but normally
no inverse Fourier transform for bounded functions (which are not
integrable or at least square integrable) exists. However, this can be made correct by taking the
inverse Fourier transform in the sense of $\SOPRd$ (as defined in Section \ref{sec:distributions}).

One possible example is the convolution by a chirp signal, which
is a bounded, highly oscillating function of the form $ch(t) = e^{i \pi \alpha |t|^2}$.
For simplicity we choose the value $\alpha = 1$.
The general chirp can be obtained from this one by dilations.
This allows us to 
derive from this also the FT of general chirp signals. 

Recall that the chirp  $ch(t) = e^{i \pi  |t|^2}$ belongs
to $\SOPRd$ and therefore has a Fourier transform in this sense. Moreover, it is in fact 
Fourier invariant, and consequently convolution by $ch$ corresponds
to pointwise multiplication of $\hat{f}$ by $ch(t)$, which is a good
operator on $\LtRdN$, because it is continuous and bounded. 

On the other hand one might expect that one can write the convolution
for any $f \in \LtRd$ as an integral, if not as a Riemann integral
so at least as a Lebesgue integral, because this is the most general
integral (at least for our purposes). Specifically, we would like 
to convolve $ch$ with the $\SINC$-function. But due to the fact that
$|ch(t)| = 1, \forall t \in \Rst$ and the fact that $\SINC \notin \LiR$ 
for no argument this convolution integral exists in the literal sense.
It is however (and of course) possible to approximate $f \in \LtR$
by functions $f_n \in \SOR$, to perform the convolutions $ch \ast f_n$
in the classical way, and then take the limit for $n \to \infty$
(with convergence in the $\Ltsp$-sense). 

There are other scenarios, for example (at least mathematicians)
are interested in linear operators from $\LpRdN$ to $\LqRdN$ of a 
similar nature. All of these  cases are covered by the following
theorem:

\begin{theorem} \label{TILSChar11} 
The Banach space $\HLiSOSOP$ of  all bounded linear operators from $\SORdN$ into $\SOPRdN$ which commute with the action of $\WRd$ or  $\LiRd$ by convolution\footnote{In the terminology of Banach modules we are talking about the fact that both $\SORd$ and $\SOPRd$ are 
Banach modules over the Banach convolution algebra $\LiRdN$, and that we are  interested in the Banach module homomorphisms.}, i.e.\ which satisfy 
\begin{equation}\label{LimodHom}
  T(g \ast f) = g \ast T(f), \quad \forall g \in \LiRd, \, f \in \SORd,
\end{equation}
or equivalently the set of all translation invariant bounded operators 
\begin{equation} \label{TILSSOSOP}
 T(T_x f) =  T_x (T\ofp{f}), \quad \forall x \in \Rdst, \, f \in \SORd,
\end{equation}
can be characterized as the set of all convolution operators of the
form $T: f \mapsto \sigma \ast f$ (given pointwise $[\sigma \ast f](x) = 
\sigma( T_x f \checkm)$) where $\sigma\in \SOPRd$. 
In fact, every such operator maps $\SORdN$ into $\CbRdN$, and the corresponding three norms are  equivalent, i.e.\ $\|\sigma\|_\SOPsp$, or the operator norm of $T$ as operator from $\SORd$ into $\CbRdN$ or into $\SOPRdN$, respectively. Moreover, any such operator can be described on the Fourier transform
side as a {\it Fourier multiplier} with the transfer function
$\widehat{\sigma} \in \SOPRd$, via
\begin{equation} \label{transfer2}
\widehat{T(f)} =  \widehat{\sigma} \cdot \hatf, \quad f \in \SORd.
\end{equation}
\end{theorem}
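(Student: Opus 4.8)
The plan is to prove the two inclusions separately, then read off the norm equivalences and the multiplier formula; throughout write $C_\sigma f := \sigma \ast f$. First (the routine inclusion) I would check that for fixed $\sigma \in \SOPRd$ the map $C_\sigma$ lies in $\HLiSOSOP$. That $C_\sigma f \in \CbRd$ with $\|C_\sigma f\|_\infty \le \|\sigma\|_\SOPsp \|f\|_\SO$ is immediate from the pointwise formula $(\sigma\ast f)(x) = \sigma(T_x f\checkm)$ together with the fact (established earlier) that $T_x$ and the flip are isometries of $\SORd$; this already yields $\SORd \ast \SOPRd \subset \CbRd$ and one half of the norm estimates. For the equivalence of \eqref{LimodHom} with \eqref{TILSSOSOP} I would use a convolution approximate identity $(u_n)\subset\SORd$: writing $g\ast f = \int_{\Rdst} g(x)\, T_x f\,dx$ as an $\SORd$-valued integral (legitimate since $x\mapsto T_xf$ is continuous into $\SORd$ and $g\in\LiRd$), boundedness of $T$ lets me pull $T$ through the integral and deduce $T(g\ast f)=g\ast T(f)$ from translation invariance; conversely $T_x u_n \to \delta_x$ gives $T(T_x f) = T_x(Tf)$ from the module property by passing to the weak$^\ast$ limit.

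For the substantial inclusion I must manufacture, from an arbitrary $T\in\HLiSOSOP$, a distribution $\sigma$ with $Tf=\sigma\ast f$. The bilinear form $B(f,g):=(Tf)(g)$ is bounded on $\SORd\times\SORd$, with $|B(f,g)|\le\|T\|\,\|f\|_\SO\,\|g\|_\SO$, and translation invariance reads $B(T_xf,g)=B(f,T_{-x}g)$. Here I would invoke the Kernel Theorem \ref{kernelFei1}(i): there is a unique $K\in\SOPRtd$ with $B(f,g)=K(f\tensor g)$, and since that identification is a norm isomorphism one has $\|K\|_\SOPsp\asymp\|T\|$. Rewriting the invariance as $K\big((T_x\tensor I)(f\tensor g)\big)=K\big((I\tensor T_{-x})(f\tensor g)\big)$ and using that elementary tensors span a dense subspace of $\SORtd$ (the tensor factorization property \textbf{(vi)} of $\SO$), I obtain $(T_x\tensor T_x)K=K$ for every $x\in\Rdst$, i.e.\ $K$ is invariant under the diagonal translations.

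The heart of the matter — and the step I expect to be the main obstacle — is to show that a diagonal-invariant kernel $K\in\SOPRtd$ is exactly of convolution type: that there is a unique $\sigma\in\SOPRd$ with $K(f\tensor g)=\sigma(f\checkm\ast g)$ and $\|\sigma\|_\SOPsp\asymp\|K\|_\SOPsp$. The natural route is the change of variables $(s,t)\mapsto(s-t,t)$: on the predual side this is a bounded automorphism of $\SORtd$ carrying $f\tensor g$ to a function whose integration in the second variable (the operator $\mathcal{P}_d$ of the $\SO$-properties theorem, which maps $\SORtd$ onto $\SORd$) produces $f\checkm\ast g$. Dualising this and using diagonal invariance to kill the integrated-out variable should yield a well-defined $\sigma\in\SOPRd$, after which one checks the identity $K(f\tensor g)=\sigma(f\checkm\ast g)$ on elementary tensors; uniqueness of $\sigma$ then follows because the products $f\checkm\ast g$ are dense in $\SORd$ (indeed $\SORd\ast\SORd=\SORd$). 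The delicate point is precisely the norm control: since $\SORd$ has \emph{no} bounded approximate identity in its own norm, a naive factorization estimate $|\sigma(f\checkm\ast g)|\le\|T\|\,\|f\|_\SO\,\|g\|_\SO$ does \emph{not} by itself bound $\sigma$ on $\SORd$ (it only controls $\sigma$ against the convolution-factorization norm, which is larger than $\|\cdot\|_\SO$). It is exactly the boundedness of $K$ supplied by the kernel theorem that circumvents this and gives $\sigma\in\SOPRd$. Once $\sigma$ is in hand, $Tf=\sigma\ast f$ follows by testing against $g$, and $Tf\in\CbRd$ is then automatic from $\SORd\ast\SOPRd\subset\CbRd$.

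Finally I would assemble the three norm estimates into an equivalence. The continuous embedding $\iota:\CbRd\hookrightarrow\SOPRd$ satisfies $\|\iota h\|_\SOPsp\le\|h\|_\infty$ (because $\|f\|_1\le\|f\|_\SO$ by Lemma~\ref{le:equiv-norm}(ii)), so $\|T\|_{\SORd\to\SOPRd}\le\|T\|_{\SORd\to\CbRd}\le\|\sigma\|_\SOPsp$, the last inequality from the first paragraph; the kernel-theorem isomorphism together with the structural step gives $\|\sigma\|_\SOPsp\le c\,\|T\|_{\SORd\to\SOPRd}$, closing the loop. The multiplier statement \eqref{transfer2} is then immediate: applying the extended transform to $Tf=\sigma\ast f$ and using \eqref{eq:2009b} yields $\widehat{Tf}=\widetilde{\FT}(\sigma\ast f)=(\widetilde{\FT}\sigma)\cdot\FT f=\widehat{\sigma}\cdot\hatf$, with transfer distribution $\widehat{\sigma}\in\SOPRd$.
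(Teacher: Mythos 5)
You should first be aware that the paper itself states Theorem~\ref{TILSChar11} \emph{without proof} (the reader is implicitly referred to \cite{fe16} and \cite{feja18}), so there is no in-paper argument to compare against; I can only assess your attempt on its own terms. Your architecture is the standard one and most of it is sound: the easy inclusion $\sigma\mapsto C_\sigma$ with $\Vert \sigma\ast f\Vert_\infty\le\Vert\sigma\Vert_{\SOPsp}\Vert f\Vert_{\SO}$ does follow from \eqref{eq:TFshift-sO-norm} and \eqref{flipisom1}; the equivalence of \eqref{LimodHom} and \eqref{TILSSOSOP} via the vector-valued integral $g\ast f=\int g(x)\,T_xf\,dx$ and Dirac sequences is correct (note that the approximate units act boundedly on $\SORd$ only through their $\Lisp$-norms, which suffices here); the reduction via the Kernel Theorem~\ref{kernelFei1}(i) to a kernel $K\in\SOPRtd$ satisfying $K\circ(T_x\tensor T_x)=K$ is correct, as is the closing of the norm loop and the derivation of \eqref{transfer2} from \eqref{eq:2009b}. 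Your observation that the naive factorization estimate $|\sigma(f\checkm\ast g)|\le\Vert T\Vert\,\Vert f\Vert_{\SO}\Vert g\Vert_{\SO}$ cannot by itself bound $\sigma$ on $\SORd$ is also exactly right and shows you see where the difficulty sits.

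The genuine gap is the step you yourself flag as the heart of the matter, which you describe only with ``should yield'': the claim that a kernel $K\in\SOPRtd$ invariant under diagonal translations is necessarily of convolution type. After your (legitimate) coordinate change $(s,t)\mapsto(s-t,t)$, what you need is the structural lemma: \emph{if $\tau\in\SOPRtd$ satisfies $\tau\circ(I\tensor T_x)=\tau$ for all $x\in\Rdst$, then $\tau=\sigma\circ\mathcal{P}$ for a unique $\sigma\in\SOPRd$, with $\Vert\sigma\Vert_{\SOPsp}$ comparable to $\Vert\tau\Vert_{\SOPsp}$}, where $\mathcal{P}$ integrates out the second variable. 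This is not a formal consequence of duality: the adjoint of $\mathcal{P}$ embeds $\SOPRd$ into the translation-invariant elements, but the converse containment (that there are no \emph{other} invariant functionals) is a real theorem -- it is the $\SOPsp$-analogue of ``the only translation-invariant distributions are multiples of Haar measure,'' and proving it with control of norms requires either an averaging/BUPU argument exploiting \weaks-compactness of bounded sets in $\SOPsp$, or a Fourier-side argument (invariance under $I\tensor T_x$ forces $\widetilde{\FT}\tau$ to be supported in $\Rdst\times\{0\}$, and one must then invoke a support-and-restriction theorem for $\SOPsp$, which is itself nontrivial and not available in the paper). Without this lemma the existence of $\sigma$ -- and hence the entire hard inclusion -- is unproven. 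Everything downstream of it in your write-up is fine, so closing this single step would complete the proof.
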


\section{Further References}

These notes are part of a more comprehensive program running
under the title ``Conceptual Harmonic Analysis'' (see \cite{fe16-1}).
It aims at providing a more integrative approach to Fourier Analysis
and its applications, by emphasizing the connections between discrete
and continuous Fourier transform. The contribution provided by
this article is meant to underline that such a more global approach
to Fourier Analysis, which certainly requires the use of generalized
functions (like Dirac measures, Dirac combs, but also almost
periodic function and their Fourier transforms, etc.) does not 
have to start from 
the theory of Schwartz functions and Lebesgue-integration, or even 
from the Schwartz-Bruhat distributions (see \cite{br61,os75})  and (Haar)-measure theory in the case of LCA groups. Instead,  at least for the Euclidean case, a simplified approach can be provided
on the basis of principles from linear functional analysis 
and the Riemann integral for continuous and well decaying functions
on $\Rdst$. Recall that the use of functional analytic methods as such
appears unavoidable due to the fact that relevant signal spaces are rarely finite dimensional. 

The original paper introducing the Banach space $\SO$ for general locally compact abelian groups is \cite{fe81-2}. At that time it was introduced as a particular Segal
algebra in the spirit of H.~Reiter \cite{re68}, 
in fact the smallest member in the family of all {\it strongly 
character invariant} (meaning in modern terminology: {\it isometrically modulation invariant})) Segal algebras. This minimality property gives a large number of properties of these spaces. It is introduced there in the context of general LCA groups. A comprehensive walkthrough of its important properties (also for general LCA groups) is \cite{ja19}. 

It turned out to be the proper domain for the treatment of the metaplectic 
group by H.~Reiter in \cite{re89}  and even for the treatment of {\it generalized stochastic processes} (see \cite{feho14}). 
Also, it is essential for the development
of a general theory of {\it modulation spaces}, which are nowadays a well
established discipline, even with interesting applications in the theory of
partial or pseudo-differential operators (see e.g.\ \cite{fe03-1}, \cite{fe06}).

From the point of view of {\it coorbit theory} as developed in \cite{fegr89} modulation spaces are associated with the STFT, which can be seen as practically equivalent with the matrix coefficients of a pair of vectors  $f,g$ in the Hilbert space $\LtRdN$ under the {\it Schr\"odinger representation} of the {\it reduced  Heisenberg group.} This makes modulation spaces very suitable for the discussion of operators arising in time-frequency analysis and espezicially in connection 
with Gabor Analysis.

It is this area where the usefulness of the spaces
$\SORdN$ and its dual became apparent again and again. Sometimes these two spaces
are viewed together as a {\it Banach Gelfand Triple} denoted
by $\SOGTrRd$. It has been the experiences especially in this area 
where the ideas about ``well chosen function spaces'' became clear. In the spirit of 
\cite{fe15} the current article describes the Wiener
algebra  $\WCOliRd$ and the Segal algebra $\SORdN$ as the most useful Banach spaces of continuous and
integrable functions. It allows to use ordinary Riemann
integrals in a very natural fashion and also covers more
or less all the classical summability kernels. On the way
to a distribution theoretical description of the Fourier
transform (cf. also the elaborations of J.\ Fischer in this 
direciont, \cite{fi15} and \cite{fi17}) the space 
$\WFWRd = \WRd \cap \FT \WRd$ is a first, intermediate step. 


While the concept of modulation spaces was originally
to define Wiener amalgam spaces on the Fourier tranform
side (in the spirit of the Fourier analytic description
of the classical smoothness spaces like $\BspqRdN$, using
dyadic, smooth partitions of unity) also the Wiener algebra
is a representative of the equally important class of
{\it Wiener amalgam spaces}. 
The general theory of Wiener amalgam spaces is described
in \cite{fost85} (Fournier/Stewart) and \cite{busm81} for
the classical case, where the {\it local component} is 
$\LpG$ and the {\it global component} is $\lqsp(\Zdst)$. 
In \cite{fe83}  much more general ingredients were admitted,
which work as long as the local component has a sufficiently
rich pointwise multiplier algebra in order to generate 
BUPUs which are uniformly bounded in that multiplier algebra.
For $\Bsp = \FLpsp$ it is enough to have boundedness in $\FLiRdN$. 
%

The general description of Wiener's algebra (described among
others in \cite{re68} and \cite{rest00}) is the paper \cite{fe77-2}.
The minimality of $\WCOliRd$ and then $\SORd = \WFLiliRd$ is studied
in \cite{fe81} and \cite{fe87-1}. Since the local behaviour of
$\FT\WRd$ equals that of $\FLiRd$ (this is valid for any Segal algbra). 

The pair consisting of  $\SORdN$ and its dual space $\SOPRdN$ can
also serve as a basis for the treatment of generalized stochastic
processes. This approach is described in \cite{feho14}, based
on the PhD thesis \cite{ho87}  of W.~H\"ormann . 

\section{The relation to the Schwartz Theory}

It is of course legitimate to ask about the relationship of the presented approach to the well established Schwartz Theory of
(tempered) distributions (see \cite{sc57}) which is widely
used for PDE or pseudo-differential operators. 


It was first observed by D.~Poguntke that $\ScRd$ is continuously
and densely embedded into $\SORdN$ and consequently 
$\SOPRdN$ is continuously embedded into $\ScPRd$. It is also clear
that the extended Fourier transform for $\ScPRd$, when restricted
to $\SOPRd$ is just the one defined directly in Lemma \ref{extendedOPS} 
without the use of tempered distributions. 
In practice $\SORd$ and $\ScRd$ resp.\ their duals have very similar properties (except for differentiability issues!), including the
existence of a kernel theorem or regularization via smoothing
and pointwise multiplication, using the relations
\begin{equation} \label{regularSO}
 \left (\SOPRd \ast \SORd \right ) \cdot \SORd \subset \SORd 
\end{equation}
which resembles the well-known relationship 
\begin{equation} \label{regularSc}
 \left (\ScPRd \ast \ScRd \right ) \cdot \ScRd \subset \ScRd. 
\end{equation}
But there are still various good reasons to consider the approach presented in this note. First of all, as mentioned several times,it is technically much less challenging, and so the hope is that it has better chances to be adopted by engineers or physicists. In 
particular for courses on signal processing and systems theory
it might be a good way to go.
For people interested in either numerical approximation of
abstract harmonic analysis the function spaces used should offer good tools for a discussion of the connection between the continuous and the finite discrete setting. Such questions usually do not involve  any differentiation.

We also point out that the advantage of a smaller room of 
distributions is the fact, that all the many invariance properties 
allow to show that one is staying within that smaller area.
In \cite{feko98} it was crucial for the derivation of the
Janssen representation of the Gabor frame operator for general
lattices to show that the distributional kernel describing the
spreading function of that operator is supported by the {\it adjoint lattice}, i.e.\ by a discrete set, and that consequently it is a 
sum of Dirac measures (because there is nothing like a practical derivative of the Dirac Delta in $\SOPRd$!). 
We could also argue, that it is enough to know that for any
$ p \in [1,\infty]$ all its elements in $\LpRd$ have a Fourier transform inside of $\SOPRd$ and not only within
some much larger space like $\ScPRd$. Theorem  \ref{TILSChar11} 
is a good example in that direction. Unlike quasi-measures 
(see \cite{la72}) we also find the transfer function inside of the Fourier invariant space $\SOPRd$,  a proper subspace of the space of quasi-distributions.

\subsection*{Acknowledgments}
The work of M.S.J.\ was carried out during the tenure of the ERCIM 'Alain Bensoussan` Fellowship Programme at NTNU. This project was written while  M.S.J.\ was visiting NuHAG at the University of Vienna. He is grateful for their hospitality.  The senior author was finishing this manuscript
while he was holding a guest position at the Mathematical Institute
of Charles University in Prague.


\begin{thebibliography}{10}

\bibitem{be96}
J.~J. {B}enedetto.
\newblock {\em {H}armonic {A}nalysis and {A}pplications.}
\newblock {S}tud. {A}dv. {M}ath. {C}{R}{C} {P}ress, {B}oca {R}aton, {F}{L},
  1996.

\bibitem{br86-2}
R.~N. {B}racewell.
\newblock {\em {T}he {F}ourier {T}ransform and {I}ts {A}pplications}.
\newblock {M}c{G}raw-{H}ill {S}eries in {E}lectrical {E}ngineering. {C}ircuits
and {S}ystems. {M}c{G}raw-{H}ill {B}ook {C}o., {N}ew {Y}ork, {T}hird edition,
1986. 

\bibitem{br61}
F.~{B}ruhat.
\newblock {D}istributions sur un groupe localement compact et applications a l'etude des repr{\'e}sentations des groupes $p$-adiques.
\newblock {\em Bull. Soc. Math. France}, 89:43--75, 1961.

\bibitem{bu81-1}
R.~{B}{\"u}rger.
\newblock {F}unctions of translation type and {W}iener's algebra.
\newblock {\em Arch. Math. (Basel)}, 36:73--78,, 1981.

\bibitem{busm81}
R.~C. {B}usby and H.~A. {S}mith.
\newblock {P}roduct-convolution operators and mixed-norm spaces.
\newblock {\em Trans. Amer. Math. Soc.}, 263:309--341, 1981.

\bibitem{busc84}
P.~L. {B}utzer and D.~{S}chulz.
\newblock {L}imit theorems with {$O$}-rates for random sums of dependent
  {B}anach-valued random variables.
\newblock {\em Math. Nachr.}, 119:59--75, 1984.

\bibitem{cw19}
M.~Cwikel.
\newblock A quick description for 
engineering students of distributions
(generalized functions) and their 
Fourier transforms.  
\newblock{\em Arxiv}, Oct. 2018. 

\bibitem{co90}
J.~B. {C}onway.
\newblock {\em {A} {C}ourse in {F}unctional {A}nalysis. 2nd ed.}
\newblock {S}pringer, {N}ew {Y}ork, 1990.

\bibitem{cofelu08}
E.~{C}ordero, H.~G. {F}eichtinger, and F.~{L}uef.
\newblock {B}anach {G}elfand triples for {G}abor analysis.
\newblock In {\em {P}seudo-differential {O}perators}, volume 1949 of {\em
  {L}ecture {N}otes in {M}athematics}, pages 1--33. {S}pringer, {B}erlin, 2008.

\bibitem{doto07-1}
M.~{D}{\"o}rfler and B.~{T}orr{\'e}sani.
\newblock {S}preading function representation of operators and {G}abor
  multiplier approximation.
\newblock In {\em {P}roceedings of {S}{A}{M}{P}{T}{A}07}, {T}hessaloniki,
  {J}une 2007.

\bibitem{fe77-3}
H.~G. {F}eichtinger.
\newblock {A} characterization of {W}iener's algebra on locally compact groups.
\newblock {\em Arch. Math. (Basel)}, 29:136--140, 1977.

\bibitem{fe77-2}
H.~G. {F}eichtinger.
\newblock {M}ultipliers from ${L}^1({G})$ to a homogeneous {B}anach space.
\newblock {\em J. Math. Anal. Appl.}, 61:341--356, 1977.

\bibitem{fe81}
H.~G. {F}eichtinger.
\newblock {A} characterization of minimal homogeneous {B}anach spaces.
\newblock {\em Proc. Amer. Math. Soc.}, 81(1):55--61, 1981.

\bibitem{fe81-1}
H.~G. {F}eichtinger.
\newblock {B}anach spaces of distributions of {W}iener's type and
  interpolation.
\newblock In P.~{B}utzer, S.~{N}agy, and E.~{G}{\"o}rlich, editors, {\em
  {P}roc. {C}onf. {F}unctional {A}nalysis and {A}pproximation, {O}berwolfach
  {A}ugust 1980}, number~69 in {I}nternat. {S}er. {N}umer. {M}ath., pages
  153--165. {B}irkh{\"a}user {B}oston, {B}asel, 1981.

\bibitem{fe81-2}
H.~G. {F}eichtinger.
\newblock {O}n a new {S}egal algebra.
\newblock {\em Monatsh. Math.}, 92:269--289, 1981.

\bibitem{fe83}
H.~G. {F}eichtinger.
\newblock {B}anach convolution algebras of {W}iener type.
\newblock In {\em {P}roc. {C}onf. on {F}unctions, {S}eries, {O}perators,
  {B}udapest 1980}, volume~35 of {\em {C}olloq. {M}ath. {S}oc. {J}anos
  {B}olyai}, pages 509--524. {N}orth-{H}olland, {A}msterdam, {E}ds. {B}.
  {S}z.-{N}agy and {J}. {S}zabados. edition, 1983.

\bibitem{fe87-1}
H.~G. {F}eichtinger.
\newblock {M}inimal {B}anach spaces and atomic representations.
\newblock {\em Publ. Math. Debrecen}, 34(3-4):231--240, 1987.

\bibitem{fe03-1}
H.~G. {F}eichtinger.
\newblock {M}odulation spaces of locally compact {A}belian groups.
\newblock In R.~{R}adha, M.~{K}rishna, and S.~{T}hangavelu, editors, {\em
  {P}roc. {I}nternat. {C}onf. on {W}avelets and {A}pplications}, pages 1--56,
  {C}hennai, {J}anuary 2002, 2003. {N}ew {D}elhi {A}llied {P}ublishers.

\bibitem{fe06}
H.~G. {F}eichtinger.
\newblock {M}odulation {S}paces: {L}ooking {B}ack and {A}head.
\newblock {\em Sampl. Theory Signal Image Process.}, 5(2):109--140, 2006.

\bibitem{fe15}
H.~G. {F}eichtinger.
\newblock {\em {C}hoosing {F}unction {S}paces in {H}armonic {A}nalysis},
  volume~4 of {\em {T}he {F}ebruary {F}ourier {T}alks at the {N}orbert {W}iener
  {C}enter, {A}ppl. {N}umer. {H}armon. {A}nal.}, pages 65--101.
\newblock {B}irkh{\"a}user/{S}pringer, {C}ham, 2015.

\bibitem{fe16}
H.~G. {F}eichtinger.
\newblock {A} novel mathematical approach to the theory of translation
  invariant linear systems.
\newblock In {P}eter {J}.~{B}entley and I.~{P}esenson, editors, {\em {N}ovel
  {M}ethods in {H}armonic {A}nalysis with {A}pplications to {N}umerical
  {A}nalysis and {D}ata {P}rocessing}, pages 1--32. 2016.

\bibitem{fe16-1}
H.~G. {F}eichtinger.
\newblock {T}houghts on {N}umerical and {C}onceptual {H}armonic {A}nalysis.
\newblock In A.~{A}ldroubi, C.~{C}abrelli, S.~{J}affard, and U.~{M}olter,
  editors, {\em {N}ew {T}rends in {A}pplied {H}armonic {A}nalysis. {S}parse
  {R}epresentations, {C}ompressed {S}ensing, and {M}ultifractal {A}nalysis},
  {A}pplied and {N}umerical {H}armonic {A}nalysis., pages 301--329.
  {B}irkh{\"a}user, 2016.

\bibitem{fegr89}
H.~G. {F}eichtinger and K.~{G}r{\"o}chenig.
\newblock {B}anach spaces related to integrable group representations and their
  atomic decompositions, {I}.
\newblock {\em J. Funct. Anal.}, 86(2):307--340, 1989.

\bibitem{feho14}
H.~G. {F}eichtinger and W.~{H}{\"o}rmann.
\newblock {A} distributional approach to generalized stochastic processes on
  locally compact abelian groups.
\newblock In G.~{S}chmeisser and R.~{S}tens, editors, {\em {N}ew {P}erspectives
  on {A}pproximation and {S}ampling {T}heory. {F}estschrift in honor of {P}aul
  {B}utzer's 85th birthday}, pages 423--446. {C}ham:
  {B}irkh{\"a}user/{S}pringer, 2014.

\bibitem{feja18} 
H.~G. {F}eichtinger and M.~S. {J}akobsen. 
\newblock {T}he inner kernel theorem for a certain {S}egal algebra. 
\newblock arXiv, 2018. 

\bibitem{feko98}
H.~G. {F}eichtinger and W.~{K}ozek.
\newblock {Q}uantization of {T}{F} lattice-invariant operators on elementary
  {L}{C}{A} groups.
\newblock In H.~G. {F}eichtinger and T.~{S}trohmer, editors, {\em {G}abor
  analysis and algorithms}, {A}ppl. {N}umer. {H}armon. {A}nal., pages 233--266.
  {B}irkh{\"a}user, {B}oston, {M}{A}, 1998.

\bibitem{fi15}
J.~V. {F}ischer.
\newblock {O}n the duality of discrete and periodic functions.
\newblock {\em {M}athematics}, 3(2):299--318, 2015.

\bibitem{fi17}
J.~V. {F}ischer.
\newblock {O}n the duality of regular and local functions.
\newblock {\em {M}athematics}, 5(41), 2017.

\bibitem{fost85}
J.~J.~F. {F}ournier and J.~{S}tewart.
\newblock {A}malgams of ${L}^p$ and $\ell^q$.
\newblock {\em Bull. Amer. Math. Soc. (N.S.)}, 13:1--21, 1985.

\bibitem{ga66}
G.~I. {G}audry.
\newblock {Q}uasimeasures and operators commuting with convolution.
\newblock {\em Pacific J. Math.}, 18:461--476, 1966.

\bibitem{ho87}
W.~{H}{\"o}rmann.
\newblock {S}tochastic {P}rocesses and {V}ector {Q}uasi-{M}easures.
\newblock Master's thesis, {U}niversity of {V}ienna, {J}uly 1987.

\bibitem{ja19}
M.~S. {J}akobsen.
\newblock {O}n a (no longer) {N}ew {S}egal {A}lgebra: {A} {R}eview of the
  {F}eichtinger {A}lgebra.
\newblock {\em J. Fourier Anal. Appl.}, pages 1 -- 82, 2018.

\bibitem{la72}
H.-C. {L}ai.
\newblock {A} characterization of the multipliers of {B}anach algebras.
\newblock {\em Yokohama Math. J.}, 20:45--50, 1972.

\bibitem{la71}
R.~{L}arsen.
\newblock {\em {A}n {I}ntroduction to the {T}heory of {M}ultipliers}.
\newblock {S}pringer-{V}erlag, {N}ew {Y}ork-{H}eidelberg, 1971.

\bibitem{lo80}
V.~{L}osert.
\newblock {A} characterization of the minimal strongly character invariant
  {S}egal algebra.
\newblock {\em Ann. Inst. Fourier (Grenoble)}, 30:129--139, 1980.

\bibitem{os75}
M.~S. {O}sborne.
\newblock {O}n the {S}chwartz-{B}ruhat space and the {P}aley-{W}iener theorem
  for locally compact {A}belian groups.
\newblock {\em J. Funct. Anal.}, 19:40--49, 1975.

\bibitem{prve08}
P.~{P}randoni and M.~{V}etterli.
\newblock {\em {S}ignal {P}rocessing for {C}ommunications}.
\newblock {C}{R}{C} {P}ress, 2008.  

\bibitem{re68}
H.~{R}eiter.
\newblock {\em {C}lassical {H}armonic {A}nalysis and {L}ocally {C}ompact
  {G}roups}.
\newblock {C}larendon {P}ress, {O}xford, 1968.

\bibitem{re89}
H.~{R}eiter.
\newblock {\em {M}etaplectic {G}roups and {S}egal {A}lgebras}.
\newblock {L}ect. {N}otes in {M}athematics. {S}pringer, {B}erlin, 1989.

\bibitem{rest00}
H.~{R}eiter and J.~D. {S}tegeman.
\newblock {\em {C}lassical {H}armonic {A}nalysis and {L}ocally {C}ompact
  {G}roups. 2nd ed.}
\newblock {C}larendon {P}ress, {O}xford, 2000.

\bibitem{sa98}
I.~W. {S}andberg.
\newblock {T}he superposition scandal.
\newblock {\em {C}ircuits {S}yst. {S}ignal {P}rocess.}, 17(6):733--735, 1998.

\bibitem{sa01-3}
I.~W. {S}andberg.
\newblock {A} note on the convolution scandal.
\newblock {\em {S}ignal {P}rocessing {L}etters, {I}{E}{E}{E}}, 8(7):210--211,
  2001.

\bibitem{sa04-1}
I.~W. {S}andberg.
\newblock {C}ontinuous multidimensional systems and the impulse response
  scandal.
\newblock {\em {M}ultidimensional {S}yst. {S}ignal {P}rocess.}, 15(3):295--299,
  2004.

\bibitem{sa05}
I.~W. {S}andberg.
\newblock {B}ounded inputs and the representation of linear system maps.
\newblock {\em {C}ircuits {S}yst. {S}ignal {P}rocess.}, 24(1):103--115, 2005.

\bibitem{sc57}
L.~{S}chwartz.
\newblock {\em {T}h{\'e}orie des {D}istributions. ({D}istribution {T}heory).
  {N}ouveau {T}irage. {V}ols. 1.}
\newblock {P}aris: {H}ermann. xii, 420p., 1957.
\bibitem{st70}

E.~M. {S}tein.
\newblock {\em {S}ingular {I}ntegrals and {D}ifferentiability {P}roperties of
{F}unctions}.
\newblock {P}rinceton {U}niversity {P}ress, {P}rinceton, {N}.{J}., 1970. 


\end{thebibliography}

\end{document}